\numberwithin{equation}{section} 
\newtheorem{thm}{Theorem}[section]
\newtheorem{prop}[thm]{Proposition}
\newtheorem{lem}[thm]{Lemma}
\newtheorem{rmq}[thm]{Remark}
\newtheorem{rmqs}[thm]{Remarks}
\newtheorem{deff}[thm]{Definition}
\DeclareMathOperator{\sgn}{sgn}
\DeclareMathOperator{\diverg}{div}
\DeclareMathOperator{\Lip}{Lip}
\def\H{\mathcal{H}}
\def\E{\mathcal{E}}
\def\R{\mathbb{R}}
\def\N{\mathbb{N}}
\def\S{\mathbb{S}}
\def\e{\varepsilon}
\def\O{\Omega}
\def\BV{BV(\Omega)}
\def\la{\lambda}
\def\vphi{\varphi}
\def\G{\nabla}
\title{Discontinuities of the minimizers of the weighted or
anisotropic total variation for image reconstruction}
\author{
Khalid Jalalzai\thanks{CMAP, CNRS UMR 7641, \'Ecole Polytechnique, 91128 Palaiseau Cedex, France ({\tt khalid.jalalzai@polytechnique.edu}).}
}
\begin{document}
\date{April 10, 2012}
\maketitle

\begin{abstract}
{The study of the regularity of the minimizer of the weighted anisotropic total variation with a general fidelity term is at the heart of this paper. We generalized some recent results on the inclusion of the discontinuities of the minimizer of the image denoising problem. In particular, we proved that for well-chosen weights and anisotropies, it is actually possible to create discontinuities that were not contained in the original image. We also observed a reduced jump property at the discontinuities of the minimizer. To prove these results we used some regularity theorems for minimal surfaces that we had to adapt to our setting. We also illustrated our theoretical results with several numerical simulations.}
\end{abstract}

\textbf{Keywords:} {weighted anisotropic total variation, image denoising, discontinuities, minimal surfaces}\\

\textbf{AMS:}{ 35J70, 65J20, 35K65, 68U10.}\\

\textbf{Acknowledgment:} I warmly thank Antonin Chambolle who suggested me this fruitful research project.

\thispagestyle{plain}
\markboth{Khalid Jalalzai}{Discontinuities of solutions of the weighted
anisotropic TV}

\section{Introduction}
Functions of bounded variation equipped with the total variation semi-norm were introduced for image reconstruction in 1992. Since then, they have had many successful applications for inverse problems in imaging. Indeed, the penalization of the total variation has the ability to smooth out the image by creating large regular zones and to keep the edges of the most important objects in the image. In this paper we aim to study the first of these two key properties in the continuous setting and for general energies.

We assume that a corrupted image $g:\O\subset\R^2\to\R$ went through a degradation
\begin{align*}
g=g_0+n
\end{align*}
where $g_0$ is the original clean image, $n$ is a Gaussian white noise of standard deviation $\sigma$. Rudin, Osher and Fatemi (ROF) proposed in 1992 to minimize the total variation
\begin{align*}
u\mapsto TV(u)=\int_\O|Du|
\end{align*}
amongst functions of bounded variation under the constraint $\|u-g\|_2^2\leq\sigma^2|\O|^2$ to solve the inverse problem and thus get a restored image $u$.
It was proven in \cite{ChamLions} that one can solve in an equivalent way the unconstrained problem
\begin{align*}
\min_{u\in\BV}\la\int_\O|Du|+\frac{1}{2}\|u-g\|_2^2
\end{align*}
for an adequate Lagrange multiplier $\la$. In the literature the minimization of ROF's energy is referred to as the \textit{denoising problem}.


It has been long observed that using the total variation has the advantage of recovering the discontinuities quite well.
We will devote our study to the behavior of the minimizer of the denoising problem at these discontinuities.  
Recently Chambolle, Caselles and Novaga proved in \cite{ChamJump} that the discontinuities of the denoised image are contained in those of the datum $g$. In other words minimizing ROF's energy does not create new discontinuities. The idea of their proof is to use the coarea formula to look at the level sets of the minimizer locally and to detect the creation of jumps when two of these level sets touch. The argument was further refined by the same authors in \cite{ChamRegularity} to prove local Hölder continuity of the minimizer of the denoising problem when the datum is itself Hölder continuous. All these results are extended to the case of the total variation flow in both papers. See also \cite{JalalzaiFlow} where the initial datum $g$ is not assumed to be bounded.
After recalling classical notions on BV functions in \linebreak Section 2 and proving some basic facts on the level sets of the anisotropic TV in Section 3, our aim in Section 4 is to generalize the results of \cite{ChamJump} to a problem of the form
\begin{align*}
\min_{u\in\BV}\int_\O\Phi(x,Du)+\int_\O \Psi(x,u(x))dx.
\end{align*}
Here $\Phi$ is a smooth elliptic anisotropy, $\Psi$ is essentially strictly convex, coercive in the second variable and integrable in the first one. To adapt the argument of \cite{ChamJump} we need to recall some basic results on the regularity of solutions of elliptic PDEs and also some standard facts on the regularity of minimal surfaces \cite{Almgren77}.

Moreover, in Section 5 we refine the previous results in the weighted case
\begin{align*}
\min_{u\in\BV}\int_\O w(x)|Du|+\frac{1}{2}{\|u-g\|}_2^2
\end{align*}
and prove that whenever $w$ is merely Lipschitz, one can observe the creation of discontinuities, that is to say, the minimizer has discontinuities that are not contained in those of the datum $g$. We also prove that the jump (think of the contrast for images) is decreased at the discontinuity. This is quite counter-intuitive if one considers a datum that is highly oscillating in the neighborhood of the discontinuity. Our result is a key step in \cite{JalalzaiFlow}, allowing the authors to extend the results of \cite{ChamJump}. In the weighted case, we also prove directly the regularity of the level lines. The proof we provide is simple and does not rely on the theory of currents.

In Section 6, we discuss an open problem, precisely, whether the discontinuity sets of the solutions of the ROF's model form a decreasing sequence with respect to the regularization parameter.

Another very important property of the total variation is that it smoothes the highly oscillating regions by creating large constant zones which is known in the literature as the \textit{staircasing effect} and is sometimes not desirable. We investigate further this property in \cite{JalalzaiStairc,JalalzaiPhD} (see also the references therein). In particular, an interesting question is to understand how the staircase zones and the discontinuities evolve with the regularization parameter $\la$. The idea is to use the results that are already established for the flow. Unfortunately, in higher dimension the connection between the flow and ROF's energy fails. 
However, we prove in \cite{JalalzaiStairc,JalalzaiPhD} that this connection actually holds for radial functions. As a consequence the discontinuities form a decreasing sequence, whereas the staircase zones increase with the regularization parameter (which is not true in general). 

Let us remark that all the results of this paper are established in dimension $N\geq 2$ since the situation is well understood in the one-dimensional case and was widely studied in the literature (see the recent paper \cite{Bonforte} for instance). Indeed, in dimension one, ROF's denoising problem reads as follows
\begin{align}\label{chapTV:ROF_1_D}
\min_{u\in BV(\R)} \int \la|u'(x)|+\frac{1}{2}{(u-g)}^2(x)dx
\end{align}
for $g\in L^2(\R)$ and some positive real $\la$. Let us denote $u_\la$ the minimizer of this problem.

Writing down the Euler-Lagrange (see \cite{ChamTV}) one immediately sees
that either $u_\la$ is constant or $z_\la=\sgn(u_\la')$ and as a consequence $u_\la=g$. This is an almost explicit formulation of the solution that tells us that
\begin{itemize}
\item[-] the discontinuities of $u_\la$ are contained in those of $g$,
\item[-] flat zones are created at maxima and minima of $g$.
\end{itemize}
This can be seen in the following simulation:
\begin{figure}[H]   \begin{minipage}[c]{\linewidth}
     \includegraphics[width=\hsize,height=0.1\vsize]{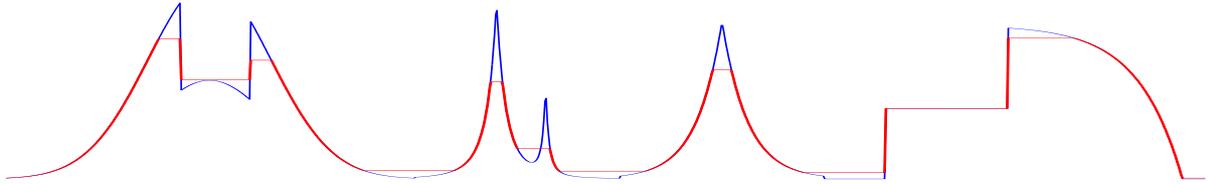}
    \caption
{Minimizer $u_\la$ (in red) of a 1D data $g$ (in blue).}
  \end{minipage}
\end{figure}

\section{Mathematical preliminary}
Henceforth $\O$ will denote an open subset of $\R^N$ with Lipschitz continuous boundary. The material of this section can be found in the classical textbooks~\cite{Ambrosio00,Giusti,Ziemer} but also in the recent survey~\cite{ChamTV}. 
\subsection{Functions of bounded variation}
Let us start with the following fundamental definition:
\begin{deff}
A function $u\in L^1(\O)$ is of \textit{bounded variation} in $\O$ (denoted $u\in BV(\O)$) if its distributional derivative $Du$ is a vector-valued Radon measure that has finite total variation \textit{i.e.} $|Du|(\O)<\infty$.
By the Riesz representation theorem, this is equivalent to say that
$$|Du|(\Omega)=\sup\left\{\int_\O u\diverg\vphi\ /\ \vphi\in C_c^\infty(\O,\R^N), \forall x\in\O\ |\vphi(x)|\leq 1\right\}<\infty.$$
In the sequel, the quantity $|Du|(\O)$ also denoted $\int_\O|Du|$ or simply $TV(u)$ will be called the \textit{total variation} of $u$. It is readily checked that ${\|\cdot\|}_1+TV$ defines a norm on $\BV$ that makes it a Banach space.
\end{deff}
A straightforward consequence of the dual definition we just gave is a key step to apply the direct method:
\begin{prop}[Sequential lower semicontinuity]\label{chapTV:lsc}
Let ${(u_n)}_{n\in\N}$ be any sequence in $BV(\O)$ such that $u_n\to u$ in $L^1(\O)$ then
\begin{align*}
\int_\O|Du|\leq\liminf_{n\to\infty} \int_\O|Du_n|.
\end{align*}
\end{prop}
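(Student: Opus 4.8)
The plan is to argue directly from the dual (distributional) characterization of the total variation recalled in the definition above. Fix an arbitrary test field $\varphi\in C_c^\infty(\O,\R^N)$ with $|\varphi(x)|\le 1$ for every $x\in\O$. The key observation is that $\diverg\varphi$ is a bounded continuous function with compact support in $\O$, hence belongs to $L^\infty(\O)$; therefore the linear functional $v\mapsto\int_\O v\,\diverg\varphi$ is continuous on $L^1(\O)$.

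Consequently, from $u_n\to u$ in $L^1(\O)$ we obtain
\begin{align*}
\int_\O u\,\diverg\varphi=\lim_{n\to\infty}\int_\O u_n\,\diverg\varphi.
\end{align*}
On the other hand, the defining supremum gives $\int_\O u_n\,\diverg\varphi\le|Du_n|(\O)$ for every $n$. Passing to the limit (a limit being in particular a $\liminf$) we get
\begin{align*}
\int_\O u\,\diverg\varphi=\lim_{n\to\infty}\int_\O u_n\,\diverg\varphi\le\liminf_{n\to\infty}\int_\O|Du_n|.
\end{align*}

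Finally, since $\varphi$ was an arbitrary admissible test field, we take the supremum over all such $\varphi$ on the left-hand side; by the dual definition this supremum equals $|Du|(\O)=\int_\O|Du|$, which is exactly the claimed inequality. (If $\liminf_{n\to\infty}\int_\O|Du_n|=+\infty$ there is nothing to prove; the computation above also shows, incidentally, that when this $\liminf$ is finite then $u\in\BV$ automatically.) I expect no serious obstacle here: the only point deserving a word of care is the interchange of limit and integral, and this is immediate because the pairing is against the \emph{fixed} bounded function $\diverg\varphi$, so that no uniform bound on the measures $|Du_n|$ is needed — only the $L^1$ convergence of the $u_n$ themselves.
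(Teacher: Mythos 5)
Your proof is correct and is precisely the ``straightforward consequence of the dual definition'' that the paper alludes to without writing out: fix an admissible $\varphi$, pass to the limit in the pairing using $\diverg\varphi\in L^\infty$ and $u_n\to u$ in $L^1$, bound by the $\liminf$, then take the supremum over $\varphi$. Nothing to add.
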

One also has
\begin{prop}[Approximation by smooth functions]\label{chapTV:approxBV}
If $u\in\BV$ then there exists a sequence ${(u_n)}_{n\in\N}$ of functions in $C^\infty(\O)$ such that
\begin{align*}
u_n\to u\text{ in } L^1(\O)\\
\int_\O|\nabla u_n| \to \int_\O |Du|.
\end{align*}
\end{prop}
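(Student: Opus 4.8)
The statement is the classical strict (``intermediate'') density of smooth functions in $BV$; I would follow the Anzellotti--Giaquinta argument, combining a locally finite partition of unity with mollification at a scale tuned to each piece. First I would exhaust $\O$ by the open sets $\O_k=\{x\in\O:\dist(x,\partial\O)>1/k\}\cap B(0,k)$ (discarding the empty ones and setting $\O_0=\emptyset$), and work with the locally finite open cover $A_k=\O_{k+1}\setminus\overline{\O_{k-1}}$. Let $\{\vphi_k\}$ be a smooth partition of unity subordinate to $\{A_k\}$, so that $\vphi_k\in C_c^\infty(A_k)$, $0\le\vphi_k\le 1$, and $\sum_k\vphi_k\equiv 1$ on $\O$.

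Fix $\e>0$ and a standard mollifier $\rho$. For each $k$ I would pick $\e_k>0$ small enough, depending on the distance from $\operatorname{supp}\vphi_k$ to $\partial A_k$, that $\rho_{\e_k}*(u\vphi_k)$ and $\rho_{\e_k}*(u\nabla\vphi_k)$ are supported in $A_k$, and simultaneously so small that $\|\rho_{\e_k}*(u\vphi_k)-u\vphi_k\|_1<\e\,2^{-k}$ and $\|\rho_{\e_k}*(u\nabla\vphi_k)-u\nabla\vphi_k\|_1<\e\,2^{-k}$. Setting $u_\e:=\sum_k\rho_{\e_k}*(u\vphi_k)$, this is a locally finite sum of smooth functions, hence $u_\e\in C^\infty(\O)$, and using $\sum_k\vphi_k=1$ we get $\|u_\e-u\|_1\le\sum_k\|\rho_{\e_k}*(u\vphi_k)-u\vphi_k\|_1\le\e$, so $u_\e\to u$ in $L^1(\O)$ as $\e\to 0$.

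For the total variations I would invoke the Leibniz rule for $BV$ functions, $D(u\vphi_k)=\vphi_k\,Du+u\,\nabla\vphi_k\,\mathcal{L}^N$, which gives
\begin{align*}
\nabla u_\e=\sum_k\rho_{\e_k}*(\vphi_k\,Du)+\sum_k\rho_{\e_k}*(u\,\nabla\vphi_k).
\end{align*}
The crucial point is that $\sum_k u\,\nabla\vphi_k=u\,\nabla\bigl(\sum_k\vphi_k\bigr)=0$ pointwise, so the second sum equals $\sum_k\bigl(\rho_{\e_k}*(u\,\nabla\vphi_k)-u\,\nabla\vphi_k\bigr)$ and thus has $L^1$ norm at most $\e$. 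Since $\vphi_k\ge 0$, one has $|\rho_{\e_k}*(\vphi_k\,Du)|\le\rho_{\e_k}*(\vphi_k\,|Du|)$, hence $\int_\O|\rho_{\e_k}*(\vphi_k\,Du)|\le\int_\O\vphi_k\,d|Du|$; summing and using $\sum_k\vphi_k=1$ once more yields $\sum_k\int_\O|\rho_{\e_k}*(\vphi_k\,Du)|\le|Du|(\O)$. Altogether $\int_\O|\nabla u_\e|\le|Du|(\O)+\e$, so $\limsup_{\e\to 0}\int_\O|\nabla u_\e|\le|Du|(\O)$, while Proposition~\ref{chapTV:lsc} applied to $u_\e\to u$ in $L^1$ gives the matching lower bound on the $\liminf$. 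Therefore $\int_\O|\nabla u_\e|\to|Du|(\O)$, and taking $\e=1/n$ produces the required sequence.

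The main obstacle is arranging the construction so that the output genuinely lies in $C^\infty(\O)$ rather than merely being smooth on compact subsets: this is what forces the partition-of-unity localization and the per-piece choice of $\e_k$ keeping mollification mass inside each $A_k$. The only other delicate point is the commutator term $\sum_k\rho_{\e_k}*(u\,\nabla\vphi_k)$, whose smallness rests entirely on the exact cancellation $\sum_k\nabla\vphi_k=0$ together with the individual error estimates chosen above.
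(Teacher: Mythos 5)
Your argument is correct and is precisely the classical Anzellotti--Giaquinta/Meyers--Serrin construction (locally finite partition of unity on the annuli $A_k$, per-piece mollification scales, cancellation of the commutator terms via $\sum_k\nabla\vphi_k=0$, and lower semicontinuity for the matching $\liminf$ bound). The paper does not prove this proposition at all --- it is stated as background and referred to the textbooks \cite{Ambrosio00,Giusti,Ziemer} --- and your proof is exactly the one found there, so there is nothing to reconcile.
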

For the direct method to apply it is useful to have a compactness result:
\begin{thm}[Rellich's compactness in $BV$]\label{chapTV:Rellich}
Given a bounded $\O\subset\R^N$ with Lipschitz boundary and any sequence ${(u_n)}_{n\in\N}$ such that $\left({\|u_n\|}_{L^1(\O)}+\int_\O |Du_n|\right)$ is bounded,
there exists a subsequence ${(u_{n(k)})}_{k\in\N}$ that converges in $L^1$ to some $u\in\BV$ as $k\to\infty$.
\end{thm}
\begin{deff}
Let $E\subset\R^N$ be a Borelian set. It is called a \textit{set of finite perimeter} or also \textit{Caccioppoli set} if $u=\chi_E$ is a function of bounded variation. We will call \textit{perimeter} of $E$ in $\O$, and denote $P(E,\O)$ or simply $P(E)$, its total variation.
\end{deff}
The following key result provides a connection between the total variation of a function and the perimeter of its level sets.
\begin{thm}[Coarea formula]\label{chapTV:CoareaAFP}
If $u\in\BV$, the set $E_t=\{u>t\}$ has finite perimeter for a.e. $t\in\R$ and
$$|Du|(B)=\int_{-\infty}^\infty|D\chi_{\{u>t\}}|(B)dt$$
for any Borel set $B\subset\O$.
\end{thm}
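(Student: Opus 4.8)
The plan is to prove separately the two inequalities $|Du|(\Omega)\le\int_{\R}P(E_t,\Omega)\,dt$ and $\int_{\R}P(E_t,\Omega)\,dt\le|Du|(\Omega)$, and then upgrade the resulting equality of total masses to an equality of measures on every Borel set. A preliminary measurability check is needed: since $\chi_{E_t}(x)=\chi_{\{u>t\}}(x)$ equals the indicator of $\{(x,t):t<u(x)\}$, it is jointly measurable on $\Omega\times\R$; hence for fixed $\varphi\in C_c^\infty(\Omega,\R^N)$ the map $t\mapsto\int_\Omega\chi_{E_t}\diverg\varphi\,dx$ is measurable, and writing $P(E_t,A)$ as a supremum over a countable dense family of admissible test fields shows that $t\mapsto P(E_t,A)$ is measurable for every open $A\subseteq\Omega$. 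Thus $\mu(A):=\int_{\R}P(E_t,A)\,dt\in[0,\infty]$ is well defined.

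For the inequality $|Du|(\Omega)\le\mu(\Omega)$, I would use Cavalieri's principle $u(x)=\int_0^\infty\chi_{E_t}(x)\,dt-\int_{-\infty}^0(1-\chi_{E_t}(x))\,dt$ together with the fact that $\int_\Omega\diverg\varphi\,dx=0$ for $\varphi\in C_c^\infty(\Omega,\R^N)$. Substituting the layer--cake formula into $\int_\Omega u\diverg\varphi\,dx$ and applying Fubini on each half-line (legitimate because $\int_0^\infty\!\int_\Omega\chi_{E_t}|\diverg\varphi|\,dx\,dt=\int_\Omega u^+|\diverg\varphi|\,dx\le\|\diverg\varphi\|_\infty\|u\|_1<\infty$, and similarly for the negative part) gives
\[
\int_\Omega u\diverg\varphi\,dx=\int_{\R}\Big(\int_\Omega\chi_{E_t}\diverg\varphi\,dx\Big)dt\le\int_{\R}|D\chi_{E_t}|(\Omega)\,dt=\mu(\Omega),
\]
and taking the supremum over all such $\varphi$ with $|\varphi|\le1$ yields the claim.

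For the reverse inequality, I would first invoke the classical coarea formula for $C^1$ functions, $\int_A|\nabla v|\,dx=\int_{\R}P(\{v>t\},A)\,dt$, and then pass to the limit: by Proposition~\ref{chapTV:approxBV} pick $u_n\in C^\infty(\Omega)$ with $u_n\to u$ in $L^1(\Omega)$ and $\int_A|\nabla u_n|\to|Du|(A)$; since $\int_{\R}\|\chi_{\{u_n>t\}}-\chi_{\{u>t\}}\|_{L^1(A)}\,dt=\|u_n-u\|_{L^1(A)}\to0$, along a subsequence $\chi_{\{u_n>t\}}\to\chi_{E_t}$ in $L^1(A)$ for a.e.\ $t$, so by the lower semicontinuity of Proposition~\ref{chapTV:lsc} one gets $P(E_t,A)\le\liminf_n P(\{u_n>t\},A)$ for a.e.\ $t$, and Fatou's lemma gives $\mu(A)\le\liminf_n\int_A|\nabla u_n|\,dx=|Du|(A)$ for every open $A\subseteq\Omega$. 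In particular $\mu(\Omega)<\infty$, so $E_t$ has finite perimeter for a.e.\ $t$, and $\mu$ extends by inner regularity to a finite Borel measure with $\mu\le|Du|$. Combined with Step~1 this gives $\mu(\Omega)=|Du|(\Omega)<\infty$; two finite Borel measures, one dominated by the other and with the same total mass, coincide, so $\int_{\R}|D\chi_{\{u>t\}}|(B)\,dt=|Du|(B)$ for every Borel $B\subseteq\Omega$.

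The main obstacle is the input from the smooth case used in the reverse inequality: the classical coarea formula for $C^1$ (or Lipschitz) functions, whose proof requires Sard's theorem to control the set of critical values and the area/coarea formula for Lipschitz maps. The other delicate points --- the joint-measurability bookkeeping that makes $\mu$ and $t\mapsto P(E_t,\cdot)$ meaningful, and the careful splitting of the Fubini argument into positive and negative layer cakes --- are routine once set up; everything else recombines the direct-method tools (lower semicontinuity, smooth approximation, Fatou) already available in the excerpt.
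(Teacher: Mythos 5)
The paper does not actually prove this theorem: it is quoted as classical from the textbook references, so there is no in-paper argument to compare with, and your proposal must be judged on its own. It follows the standard textbook route (duality for one inequality, smooth approximation plus the classical coarea formula and Fatou for the other, then a measure-theoretic upgrade), and most of it is sound, but one step fails as written: the identity $\liminf_n\int_A|\nabla u_n|\,dx=|Du|(A)$ for \emph{every} open $A\subseteq\Omega$. Proposition \ref{chapTV:approxBV} only gives $\int_\Omega|\nabla u_n|\to|Du|(\Omega)$; for a strict open subset $A$ the measures $|\nabla u_n|\,dx$ converge weakly-$*$ to $|Du|$, which yields $\liminf_n\int_A|\nabla u_n|\,dx\ge|Du|(A)$ --- the opposite of what you need --- and the liminf can genuinely exceed $|Du|(A)$ when $|Du|$ charges $\partial A\cap\Omega$ (take $u=\chi_{B(0,1)}$ in $\Omega=B(0,2)$ and $A=B(0,1)$: a symmetric mollification leaves half the perimeter inside $A$ in the limit, while $|Du|(A)=0$). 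Consequently the domination $\mu\le|Du|$ on open sets, on which your final ``equal total mass'' argument rests, is not established by your Step 3.

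The repair stays entirely within your framework: localize the \emph{duality} inequality rather than the approximation one. For $\varphi\in C_c^\infty(A,\R^N)$ one still has $\int_A\diverg\varphi\,dx=0$, so your first step gives $|Du|(A)\le\mu(A)$ for every open $A\subseteq\Omega$, while the approximation/Fatou argument is needed only globally, to get $\mu(\Omega)\le|Du|(\Omega)$ (whence $P(E_t,\Omega)<\infty$ for a.e.\ $t$ and $\mu(\Omega)=|Du|(\Omega)<\infty$). Then for any relatively closed $C\subseteq\Omega$,
\begin{align*}
\mu(C)=\mu(\Omega)-\mu(\Omega\setminus C)\le|Du|(\Omega)-|Du|(\Omega\setminus C)=|Du|(C),
\end{align*}
and inner/outer regularity of the two finite Borel measures upgrades the pair of one-sided bounds to equality on every Borel set. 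Everything else in your write-up --- the measurability bookkeeping, the splitting of the layer-cake/Fubini step using $\int_\Omega\diverg\varphi\,dx=0$, the a.e.-$t$ subsequential $L^1$ convergence of the level sets and the lower semicontinuity of the perimeter --- is correct, with the classical coarea formula for $C^1$ functions as the one genuinely external ingredient, as you note.
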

%
Functions of bounded variation have some nice structural properties that we are going to recall here:
\begin{deff}\label{chapTV:def_jump}
We say that $u\in L^1_{loc}(\O)$ has an \textit{approximate limit} at $x\in\O$ if there exists $z\in\R$ such that
\begin{align*}
\lim_{r\to 0}\frac{1}{|B(x,r)|}\int_{B(x,r)}|u(y)-z|dy=0.
\end{align*}
The set of points where this does not hold is called the \textit{approximate discontinuity set} and denoted $S_u$.\\

We say that $x\in\O$ is an \textit{approximate jump point} of $u$ if there exist $u^+(x)\not = u^-(x)\in\R,\ \nu(x)\in\R^N$ a unitary vector such that
\begin{align*}
\lim_{r\rightarrow 0} \frac{1}{|B_r^\pm(x,\nu(x))|}\int_{B_r^\pm(x,\nu(x))} |u(y)-u^\pm(x)|=0
\end{align*}
where $B_r^\pm(x,\nu(x))=\{y\in B(x,r);\pm|\langle \nu(x),y-x \rangle|>0\}$.
We shall denote by $J_u$ the set of \textit{jump points}.\\

If $u=\chi_E$ is the characteristic function of a set $E$ of finite perimeter in $\O$, $J_u$ is then denoted $\partial^*E$ and called the \textit{reduced boundary} of $E$. 
\end{deff}
Then we have the following structure theorem:\\
\begin{thm}
If $u\in\BV$ then $\H^{N-1}(S_u\setminus J_u)=0$
and one has the following decomposition
\begin{align*}
Du=\G udx+(u^+-u^-)\nu\H^{N-1}_{|J_u}+D^c u
\end{align*}
for some measure $D^c u$ referred to as the Cantor part of $Du$.
\end{thm}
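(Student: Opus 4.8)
To establish this classical structure theorem (Federer--Vol'pert), I would combine the Radon--Nikodym theorem with De Giorgi's structure theorem for sets of finite perimeter, transferring the latter to a general $u$ through the coarea formula (Theorem~\ref{chapTV:CoareaAFP}). As a first step, split $Du=\nabla u\,\mathcal{L}^N+D^s u$ with $\nabla u\in L^1(\O,\R^N)$ and $D^s u$ singular with respect to $\mathcal{L}^N$, and show that $\mathcal{L}^N(S_u)=0$. The tool is the rescaled Poincar\'e inequality $\frac{1}{|B_r(x)|}\int_{B_r(x)}|u-u_{B_r(x)}|\le C_N\,r\,\frac{|Du|(B_r(x))}{|B_r(x)|}$ together with the fact that for $\mathcal{L}^N$-a.e.\ $x$ the ratio $|Du|(B_r(x))/|B_r(x)|$ stays bounded --- indeed it tends to $|\nabla u(x)|$, since $|D^s u|(B_r(x))/|B_r(x)|\to 0$ at $\mathcal{L}^N$-a.e.\ $x$. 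Hence the left-hand side is $O(r)$, the averages $u_{B_r(x)}$ form a Cauchy family as $r\to 0$, and $u$ has an approximate limit at $\mathcal{L}^N$-a.e.\ $x$; running the same estimate with the sharper scaling $|D^s u|(B_r(x))/r^N\to 0$ shows moreover that the approximate differential of $u$ coincides with $\nabla u$ almost everywhere.

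Next I would pass to the level sets $E_t=\{u>t\}$. The coarea formula gives $|Du|(B)=\int_\R|D\chi_{E_t}|(B)\,dt$ for every Borel set $B$, together with the vector identity $Du=\int_\R D\chi_{E_t}\,dt$. By De Giorgi's theorem, for a.e.\ $t$ one has $D\chi_{E_t}=\nu_{E_t}\,\H^{N-1}_{|\partial^*E_t}$ with $\partial^*E_t$ countably $(N-1)$-rectifiable; in particular $|D\chi_{E_t}|$ is absolutely continuous with respect to $\H^{N-1}$, so $|Du|$ vanishes on $\H^{N-1}$-negligible sets, and --- cutting each $\partial^*E_t$ into pieces of finite $\H^{N-1}$-measure while letting $t$ range over a countable dense set --- the set $S_u$ is $\sigma$-finite with respect to $\H^{N-1}$ and, up to an $\H^{N-1}$-null set, contained in a countable union of reduced boundaries. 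For $u=\chi_E$ the statement is then immediate: $S_{\chi_E}=\partial^*E$ up to an $\H^{N-1}$-null set, every point of $\partial^*E$ is a jump point with $u^+=1$, $u^-=0$ and $\nu=\nu_E$, so $J_{\chi_E}=\partial^*E$, $\nabla\chi_E=0$ a.e., there is no Cantor part, and the decomposition reduces to $D\chi_E=\nu_E\,\H^{N-1}_{|\partial^*E}$.

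For general $u$ I would insert $D\chi_{E_t}=\nu_{E_t}\,\H^{N-1}_{|\partial^*E_t}$ into the vector coarea formula and split the measure $\int_\R\nu_{E_t}\,\H^{N-1}_{|\partial^*E_t}\,dt$ according to three disjoint regions of $\O$. On the set where, for a set of levels $t$ of positive length, $x$ lies on $\partial^*E_t$ with one and the same normal $\nu(x)$, a Fubini argument in the pair $(x,t)$ reassembles the slices into $(u^+-u^-)\nu\,\H^{N-1}_{|J_u}$, the one-sided traces $u^\pm(x)$ being recovered as the essential supremum, resp.\ infimum, of the levels $t$ for which $x$ belongs to the measure-theoretic interior, resp.\ exterior, of $E_t$; this simultaneously identifies $J_u$. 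Off $J_u$, the reduced boundaries passing through a fixed point carry only a length-null set of levels $t$, which forces the two one-sided approximate limits to coincide there and yields $\H^{N-1}(S_u\setminus J_u)=0$. Finally, after subtracting the jump contribution $(u^+-u^-)\nu\,\H^{N-1}_{|J_u}$ from $Du$, the remaining measure has absolutely continuous part $\nabla u\,\mathcal{L}^N$ by the first step, while what is left of it is singular with respect to $\mathcal{L}^N$ and vanishes on every set of finite $\H^{N-1}$-measure; this is, by definition, the Cantor part $D^c u$.

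The main obstacle is the technical heart of De Giorgi's theorem: the blow-up argument showing that at $\H^{N-1}$-a.e.\ point of the reduced boundary $\chi_E$ converges in $L^1_{\mathrm{loc}}$ to the indicator of a half-space, from which one reads off the rectifiability of $\partial^*E$ and the precise form of $D\chi_E$. The second delicate point is the measurability and Fubini bookkeeping needed to upgrade statements valid ``for $\mathcal{L}^1$-a.e.\ level $t$'' into statements valid ``for $\H^{N-1}$-a.e.\ point $x$'' when the level sets are recombined through the coarea formula.
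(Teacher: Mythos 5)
This theorem appears in the paper's preliminaries section and is quoted without proof; the author refers the reader to the classical textbooks (Ambrosio--Fusco--Pallara, Giusti, Ziemer), so there is no in-paper argument to compare against. Your sketch is the standard Federer--Vol'pert route as presented in those references: a.e.\ approximate continuity via the rescaled Poincar\'e inequality and Besicovitch differentiation, then the coarea formula to reduce to level sets, De Giorgi's theorem to represent each $D\chi_{E_t}$ as $\nu_{E_t}\,\H^{N-1}_{|\partial^*E_t}$, and a Fubini recombination in $(x,t)$ to isolate the jump part and identify $u^{\pm}$ as extremal levels. The architecture is sound and you correctly flag the two genuinely hard points (the blow-up in De Giorgi's theorem and the measurability bookkeeping). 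Two ingredients are used silently and should be named: first, Federer's criterion $\H^{N-1}(\partial^e E\setminus\partial^*E)=0$ relating the essential and reduced boundaries --- the natural covering of $S_u$ is by \emph{essential} boundaries $\partial^e E_t$ (points of density neither $0$ nor $1$), and passing to reduced boundaries up to $\H^{N-1}$-null sets is exactly Federer's theorem, not De Giorgi's; second, the locality lemma for rectifiable sets (two reduced boundaries overlapping on a set of positive $\H^{N-1}$-measure have coinciding approximate normals $\H^{N-1}$-a.e.\ there), which is what guarantees that the ``common normal'' region you integrate over actually exhausts $S_u$ up to an $\H^{N-1}$-null set, and hence that $\H^{N-1}(S_u\setminus J_u)=0$. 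With those two lemmas made explicit, the outline is the complete standard proof.
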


%

\subsection{BV functions in image processing}

The classical model of a functional where total variation plays a key role is the so-called Rudin-Osher-Fatemi energy:
\begin{align}\label{chapTV:ROF}
\tag{ROF}
\E_\lambda(u)=\lambda\int_\O|Du|+\frac{1}{2}{\|u-g\|}^2_2
\end{align}
In the sequel we shall be interested in minimizing this energy in $BV(\O)$ for some positive real $\lambda$. By Proposition \ref{chapTV:lsc}, there is a unique minimizer in $BV(\O)$, denoted $u_\lambda$ in the sequel.\\

The parameter $\la$ really plays the role of a tuning parameter as one can see it in the following
\begin{prop}\label{chapTV:ConvLambda}
Let $g\in L^2(\O)$, $\lambda$ some positive real and $u_\lambda$ be the corresponding minimizer of $(ROF)$ then whenever $\lambda\to 0$
$$u_\lambda\to g\text{ in }L^2(\O).$$
\end{prop}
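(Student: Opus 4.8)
The plan is to prove $u_\lambda \to g$ in $L^2(\O)$ as $\lambda \to 0$ by a standard minimality-plus-compactness argument, exploiting the fact that the fidelity term $\frac12\|u-g\|_2^2$ alone is minimized (at value $0$) by $g$ itself. First I would record the basic energy inequality coming from optimality of $u_\lambda$: since $g$ is an admissible competitor and $TV$ is nonnegative, one has
\begin{align*}
\lambda\int_\O|Du_\lambda| + \frac12\|u_\lambda - g\|_2^2 \;=\; \E_\lambda(u_\lambda) \;\leq\; \E_\lambda(g) \;=\; \lambda\int_\O|Dg|.
\end{align*}
The subtlety here is that $g\in L^2(\O)$ need not lie in $\BV$, so $\int_\O|Dg|$ may be $+\infty$ and this inequality is then vacuous. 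To fix this I would first approximate: for $\e>0$ pick $g_\e\in C^\infty(\O)\cap L^2(\O)$ (or a smooth truncation/mollification) with $\|g-g_\e\|_2 \leq \e$ and $\int_\O|\nabla g_\e| < \infty$; such a $g_\e$ exists by density of smooth functions in $L^2$. Using $g_\e$ as the competitor gives
\begin{align*}
\lambda\int_\O|Du_\lambda| + \frac12\|u_\lambda - g\|_2^2 \;\leq\; \lambda\int_\O|\nabla g_\e| + \frac12\|g_\e - g\|_2^2 \;\leq\; \lambda\int_\O|\nabla g_\e| + \frac{\e^2}{2}.
\end{align*}

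From this inequality I would extract the convergence. Dropping the (nonnegative) total variation term yields $\frac12\|u_\lambda-g\|_2^2 \leq \lambda\int_\O|\nabla g_\e| + \e^2/2$; letting $\lambda\to 0$ gives $\limsup_{\lambda\to 0}\|u_\lambda - g\|_2^2 \leq \e^2$, and since $\e>0$ is arbitrary we conclude $\|u_\lambda - g\|_2 \to 0$, which is exactly the claim. In fact this argument is clean enough that no compactness theorem is needed at all — the quadratic fidelity term does all the work — so Theorem~\ref{chapTV:Rellich} and Proposition~\ref{chapTV:lsc} are not required here, only the existence and uniqueness of $u_\lambda$ (already noted) and $L^2$-density of smooth functions.

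The main (minor) obstacle is precisely the integrability issue just mentioned: one must not assume $g\in\BV$, so the naive one-line comparison against $g$ fails and the approximation step with $g_\e$ is genuinely needed. A secondary point to be careful about is that $u_\lambda$ should be understood along the whole family $\lambda\to 0^+$ (a net/limit, not a sequence), but since the estimate $\|u_\lambda-g\|_2^2 \leq 2\lambda\int_\O|\nabla g_\e| + \e^2$ holds uniformly for every small $\lambda$, the conclusion holds for the full limit without passing to subsequences. Optionally, if one prefers, one can additionally remark that the same inequality shows $\lambda\int_\O|Du_\lambda|\to 0$, and combined with lower semicontinuity (Proposition~\ref{chapTV:lsc}) this recovers the fact that $u_\lambda$ stays bounded in $L^2$ and any $L^1_{loc}$-limit point is $g$ — but this is not needed for the stated $L^2$ convergence.
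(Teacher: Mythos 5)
Your proof is correct. It coincides with the paper's argument in the case $g\in\BV$ (compare $u_\lambda$ against the competitor $g$ and drop the nonnegative TV term), but for general $g\in L^2(\O)$ the paper does not give a self-contained proof: it invokes a standard property of the proximal mapping, citing \cite[Proposition 2.6]{Brezis73}, which amounts to combining nonexpansiveness of $u\mapsto u_\lambda$ with convergence on the dense set $\dom(\partial TV)$. You instead handle the general case directly by inserting a smooth, compactly supported approximation $g_\e$ as the competitor in the problem with datum $g$, obtaining $\|u_\lambda-g\|_2^2\leq 2\lambda\int_\O|\nabla g_\e|+\e^2$ and then taking $\limsup_{\lambda\to0}$ before sending $\e\to0$. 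This is a genuinely more elementary route: it avoids any appeal to monotone operator theory or to the $1$-Lipschitz property of the proximal map, at the modest cost of the (standard) density of $C_c^\infty(\O)$ in $L^2(\O)$; the only point worth making explicit is that the competitor must actually lie in $\BV$, which is why one should take $g_\e$ compactly supported (or otherwise ensure $g_\e\in L^1(\O)$ with finite total variation) rather than an arbitrary mollification. Your closing observations --- that no compactness is needed and that the limit holds along the full family $\lambda\to0^+$ --- are accurate.
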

\noindent In other words, the less we regularize the closer the minimizer gets to the data in the $L^2$ sense. This is quite what we expect. \\

The proof is really simple in case $g\in\BV$: 
since $g$ is itself a candidate for the minimization
\begin{align}\label{chapTV:ineg_g_BV}
\lambda\int_\O |Du_\lambda|+\frac{1}{2}{\|u_\lambda-g\|}_2^2&\leq\lambda\int_\O |Dg|
\end{align}
which yields the result with $\lambda\to0$.
In case $g\in L^2(\O)$, this proposition is actually a basic property of the proximal mapping (see~\cite[Proposition 2.6]{Brezis73}).\\ 

%
%

By the coarea formula, the superlevel sets $\{u_\lambda>t\}$ are sets of finite perimeter for almost every $t$ that satisfy the following minimal surface problem:\\
\begin{thm}\label{chapTV:LevelSetPb}
Let $u_\la$ be the minimizer of (\ref{chapTV:ROF}). Then for any $t\in\R$, $\{u_\la>t\}$ (resp. $\{u_\la\geq t\}$) is the minimal (resp. maximal) solution of the minimal surface problem
\begin{align}\label{chapTV:LevelSetPbEq}
\min_E \la P(E,\O)+\int_E \left(t-g(x)\right)dx
\end{align}
over all sets of finite perimeter in $\O$. Moreover $\{u_\la>t\}$ being defined up to negligible sets, there exists an open representative.
\end{thm}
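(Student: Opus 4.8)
The plan is to exploit the strict convexity of the $L^2$ fidelity term and the coarea formula, which together reduce the ROF problem to a decoupled family of prescribed-curvature problems indexed by the level $t$. First I would establish the variational characterization: fix $t$ and a competitor set $F$ of finite perimeter; writing $v = \chi_{\{u_\la > t\}}$ and comparing $u_\la$ with the truncation/modification $u_\la \vee t$ versus $u_\la$ on $F$ (more precisely, using the comparison $u' = u_\la + s(\chi_F - v)$ or directly invoking the layer-cake identity $\int_\O |Du_\la| = \int_\R P(\{u_\la>t\},\O)\,dt$ and $\frac12\|u_\la - g\|_2^2 = \int_\R \int_{\{u_\la>t\}}(t-g)\,dx\,dt + C$), one shows that if $E_t = \{u_\la > t\}$ failed to minimize $\la P(E,\O) + \int_E(t-g)\,dx$, one could build a strictly better competitor for \eqref{chapTV:ROF}, contradicting minimality. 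This is the standard argument (Chambolle--Lions), and I would present it via the identity that for any set $F$ and the choice of replacing $u_\la$ by $\max(u_\la,t)\chi_F + u_\la\chi_{\O\setminus F}$ type modifications.

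Next I would address the minimal/maximal issue. The key point is the comparison principle: if $t < t'$ then $\{u_\la > t\} \supseteq \{u_\la > t'\}$, and more generally any two solutions $E, E'$ of \eqref{chapTV:LevelSetPbEq} for the same $t$ satisfy that $E \cap E'$ and $E \cup E'$ are also solutions (submodularity of perimeter: $P(E\cap E') + P(E\cup E') \le P(E) + P(E')$, with the volume term being additive). Hence the solution set is a lattice, and one defines $\{u_\la \ge t\} = \bigcup_{s < t}\{u_\la > s\}$ (up to null sets) as the maximal solution and $\{u_\la > t\}$ as the minimal one; the identification with the actual superlevel sets of the pointwise-defined minimizer $u_\la$ follows because $u_\la(x) = \sup\{t : x \in E_t^{\min}\}$ recovers a function whose ROF energy equals the infimum, hence equals $u_\la$ by uniqueness. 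For the open representative: since $g \in L^2$, the set $E_t$ solves a problem whose Euler--Lagrange equation is $\la H_{\partial E_t} = g - t$ in a weak sense, so $\partial E_t$ is (by De Giorgi--type regularity for quasiminimizers of perimeter, or here more elementarily by the density estimates for $\Lambda$-minimizers) a set with no points of density $0$ or $1$ on its topological boundary failing the reduced-boundary condition; one then takes the representative $\widetilde{E_t} = \{x : \lim_{r\to 0} |E_t \cap B_r(x)|/|B_r(x)| = 1\}$, which is open by the relative isoperimetric / density-estimate argument.

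The main obstacle, and where I would spend the most care, is the choice of admissible competitor in the reduction step — ensuring that perturbing a single level set of $u_\la$ yields a genuine $BV$ competitor with controlled energy change, since naively replacing $\chi_{E_t}$ by $\chi_F$ inside the layer-cake decomposition does not correspond to any single function unless one is careful. The clean route is: given $F$ with $\la P(F,\O) + \int_F(t-g) < \la P(E_t,\O) + \int_{E_t}(t-g)$, set $u' = \min(u_\la, t)$ on $F \setminus E_t$ and $u' = \max(u_\la, t)$ on $E_t \setminus F$, $u' = u_\la$ elsewhere (this is the "push past level $t$" construction); one then checks via the coarea formula that $TV(u') - TV(u_\la) \le P(F,\O) - P(E_t,\O)$ and that the fidelity term changes by exactly $\int$ over the symmetric difference weighted appropriately, contradicting minimality of $u_\la$. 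Proving these two inequalities rigorously — the total-variation one requires a submodularity-type estimate applied level by level — is the technical heart, and strict convexity of $\frac12\|\cdot\|_2^2$ is what upgrades "$\le$" to the strict inequality needed to force $E_t$ to be exactly the minimal solution rather than merely a solution.
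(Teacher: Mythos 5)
Your overall strategy (perturb the minimizer to contradict minimality of $u_\la$ if some level set fails to minimize \eqref{chapTV:LevelSetPbEq}) is a legitimate route, but the explicit competitor you propose at what you yourself call ``the technical heart'' is vacuous. On $F\setminus E_t$ one has $u_\la\leq t$, so $\min(u_\la,t)=u_\la$ there; on $E_t\setminus F$ one has $u_\la>t$, so $\max(u_\la,t)=u_\la$ there. Hence your $u'$ is identically $u_\la$ and the two inequalities you want to check are trivially equalities, yielding no contradiction. Swapping $\min$ and $\max$ (i.e., raising $u_\la$ to $t$ on $F\setminus E_t$ and lowering it to $t$ on $E_t\setminus F$) gives a genuine competitor, but then $\{u'>t\}=E_t\cap F$ and $\{u'\geq t\}\supseteq E_t\cup F$, so the coarea computation compares $E_t$ with $E_t\cap F$ and $E_t\cup F$ rather than with $F$ itself; you would still need the submodularity inequality $P(E_t\cap F)+P(E_t\cup F)\leq P(E_t)+P(F)$ to close the loop, and a careful bookkeeping of the perimeters of the intermediate level sets $\{u'>s\}$ for $s\neq t$. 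None of this is impossible, but as written the step that carries the whole argument does not go through. (Also, $\{u_\la\geq t\}$ is $\bigcap_{s<t}\{u_\la>s\}$, not the union, which is essentially all of $\O$.)

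It is worth noting that the paper avoids this difficulty entirely by arguing in the opposite direction (Section 3, Lemmas \ref{chapTV:lem_comparaison}--\ref{chapTV:NivCroissants} and the lemma preceding Proposition \ref{chapTV:LevelSetPb_f_thm}): one first solves the minimal surface problem \eqref{chapTV:LevelSetPbEq} for every $t$, uses the submodularity-based comparison lemma to show the (minimal) solutions $E_t^-$ are nested in $t$, assembles the function $v(x)=\sup\{t:x\in E_t\}$, and then shows by integrating the level-set minimality inequalities over $t$ (Fubini plus the coarea formula) that $v$ minimizes the ROF energy; uniqueness of the minimizer forces $v=u_\la$, hence $\{u_\la>t\}=E_t^-$ is the minimal solution and $\{u_\la\geq t\}=E_t^+$ the maximal one. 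This ``synthesis plus uniqueness'' route only ever needs the easy implication (if every level set minimizes, the assembled function minimizes) and never requires modifying a single level set of $u_\la$, which is precisely the step where your construction breaks down. Your treatment of the lattice structure of the solution set and of the open representative via the density estimate is correct and consistent with the paper.
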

The results of this section form the foundations for the study of similar properties for more general energies. This will be the object of the next part.


\section{Anisotropic total variation with a generic data fidelity}

\subsection{The anisotropic total variation: the case of a Finsler metric}
In calculus of variations, one is often interested in minimizing an integral functional of the form
$$\E(u)=\int_\O F(x,u,\nabla u)$$
among all $C^1(\O)$ or $W^{1,1}(\O)$ functions $u$ and under some additional constraints.
Though, the minimization problem is not well-posed and it is natural to seek for an extension of this functional in the completion of $W^{1,1}(\O)$ which is the space $BV(\O)$. Moreover, to apply the classical direct method of the calculus of variations we need the extension functional to be lower semicontinuous with respect to the $L^1$ convergence. The natural choice for the extension is therefore the so-called \textit{relaxed functional} $\bar{\E}(u)$ which corresponds to the lower semicontinuous envelope.\\

In the sequel, we shall be interested in minimizing energies of the form
\begin{align*}
\E(u)=\int_\O \Phi(x,\nabla u(x))dx+\int_\O \Psi(x,u(x))dx
\end{align*}
for some $\Phi$ and $\Psi$ that we will specify later. 
For the moment, we are going to focus on the first term namely 
\begin{align*}
J_\Phi(u)=\int_\O \Phi(x,\nabla u(x))dx
\end{align*}
and will recall its lower semicontinuous envelope $\bar{J}_\Phi$.\\

From now on, the integrand $\Phi(x,p):\O\times \R^N\rightarrow\R$ will be called \textit{a Finsler integrand} if


\begin{itemize}
\item[$\textbf{(H1)}$] $\Phi(x,\cdot)$ is convex for any $x\in\O$, 
\item[$\textbf{(H2)}$] $\Phi(x,\cdot)$ is of linear growth uniformly $x\in\O$ \emph{i.e.} 
$$C_\Phi^{-1}|p|\leq \Phi(x,p)\leq C_\Phi|p|,\ \forall x\in\O,\ p\in\R^N$$
for some positive real constant $C_\Phi$, 
\item[$\textbf{(H3)}$] $\Phi$ is positively 1-homogeneous in the variable $p$ \emph{i.e.} 
$$\Phi(\cdot,\lambda p)=\lambda \Phi(\cdot,p),\ \forall \lambda>0,\ p\in\R^N,$$
\item[$\textbf{(H4)}$] $\Phi$ is continuous.
\end{itemize}
The integrand $\Phi$ is a \textit{reversible Finsler integrand} if it satisfies in addition
\begin{itemize}
\item[$\textbf{(H5)}$] $\Phi(x,-p)=\Phi(x,p),\ \forall x\in\O,\ p\in\R^N.$
\end{itemize}
If one assumes that
\begin{itemize}
\item[$\textbf{(H6)}$] $\Phi(\cdot,p)$ and $D_p\Phi(\cdot,p)$ are Lipschitz continuous on $\O$ uniformly $p\in\S^{N-1}$ \textit{i.e.}
\begin{align*}
\sup_{p\in\S^{N-1}}&|\Phi(x,p)-\Phi(\tilde{x},p)|\leq C|x-\tilde{x}|\ \forall x,\tilde{x}\in \O,\\
\sup_{p\in\S^{N-1}}&|D_p\Phi(x,p)-D_p\Phi(\tilde{x},p)|\leq C|x-\tilde{x}|\ \forall x,\tilde{x}\in \O,
\end{align*}

\item[$\textbf{(H7)}$] $\Phi(x,\cdot)$ has locally $\beta$-Hölder second order partial derivatives (with $\beta\in(0,1]$) on $\R^N\setminus\{0\}$ uniformly $x\in\O$ 
and
\begin{align*}
|D^2_p\Phi(x,p)|\leq C\ \forall x\in\O,\ p\in \S^{N-1},
\end{align*}

\item[$\textbf{(H8)}$] $\Phi$ is \textit{elliptic} in the sense that
\begin{align*}
\langle D_p^2\Phi(x,p)\xi,\xi\rangle\geq\frac{{\left|\xi-\left(\xi\cdot\frac{p}{|p|}\right)\frac{p}{|p|}\right|}^2}{|p|},\ \forall x\in\O,\ \xi\in\R^N,\ p\in\R^N\setminus\{0\},
\end{align*}
or equivalently (see \cite{Almgren77})
\begin{align*}
\langle \nabla_p \Phi(x,p)- \nabla_p \Phi(x,\tilde{p}),p-\tilde{p}\rangle\geq {|p-\tilde{p}|}^2,\ \forall x\in\O,\ p,~\tilde{p}\in\S^{N-1},
\end{align*} 
\end{itemize}
we shall say that $\Phi$ is \textit{a strongly convex Finsler integrand}.\\

\begin{rmqs}\label{chapTV:Rmq_def_Phi}
\begin{trivlist}
\item[$(i)$]Assumptions $(H1)-(H3)$ imply that $\Phi(x,\cdot)$ is Lipschitz uniformly $x\in\O$ \textit{i.e.}
$$\sup_{x\in\O} |\Phi(x,p)-\Phi(x,\tilde{p})|\leq C |p-\tilde{p}|.$$
\item[$(ii)$]For later reference we also note that $p\cdot \G_p\Phi(\cdot,p)=\Phi(\cdot,p)$ by assumption $(H3)$. 
If $\beta=1$, then by $(H7)$ it follows that $\G_p\Phi$ is Lipschitz continuous on $\R^N\times\S^{N-1}$. 
\item[$(iii)$]Any Riemannian metric or more generally Finsler metric gives rise to a Finsler integrand. We refer to \cite{Amar,Bellettini} for further details.
\item[$(iv)$] We get the total variation by simply setting $\Phi(x,p)=|p|$ which is also called in the literature the \textit{area integrand.} \\
\end{trivlist}
\end{rmqs}

As we just said, to be of some interest the extension of functional $J_\Phi$ has to be lower semicontinuous on $BV(\O)$. This is ensured by the following representation result:

\begin{prop} Let $\Phi:\O\times \R^N\rightarrow\R$ be a Finsler integrand. For any $u\in\BV$ we have
$$\bar{J_\Phi}(u)=\int_\O \Phi(x,\nabla u)+\int_\O \Phi\left(x,\frac{D^s u}{|D^s u|}\right)|D^s u|$$
where $\frac{D^s u}{|D^s u|}$ is the Radon-Nikodym derivative of $D^su$ with respect to $|D^s u|$.\\
\end{prop}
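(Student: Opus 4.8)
The plan is to identify the right-hand side with a single functional, write $\mathcal{J}_\Phi(u)=\int_\O \Phi(x,\nabla u)\,dx+\int_\O \Phi\!\left(x,\tfrac{D^su}{|D^su|}\right)|D^su|$, and prove the two inequalities $\bar{J_\Phi}\geq\mathcal J_\Phi$ and $\bar{J_\Phi}\leq\mathcal J_\Phi$ separately. First I would record, using the positive $1$-homogeneity $(H3)$ together with the Radon--Nikodym decomposition $Du=\nabla u\,dx+D^su$, that $\mathcal J_\Phi(u)=\int_\O\Phi\!\left(x,\tfrac{dDu}{d|Du|}\right)d|Du|$: on the absolutely continuous part $\tfrac{dDu}{d|Du|}=\tfrac{\nabla u}{|\nabla u|}$ and $d|Du|=|\nabla u|\,dx$, so $(H3)$ gives $\Phi\!\left(x,\tfrac{\nabla u}{|\nabla u|}\right)|\nabla u|=\Phi(x,\nabla u)$, and similarly on the singular part. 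In particular $\mathcal J_\Phi$ coincides with $J_\Phi$ on $W^{1,1}(\O)$ (hence on $C^1(\O)$), so the statement amounts to showing that $\mathcal J_\Phi$ is precisely the $L^1$-lower semicontinuous envelope of this common value.

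For the lower bound $\bar{J_\Phi}\geq\mathcal J_\Phi$, the key point is that $\mathcal J_\Phi$ is itself sequentially lower semicontinuous for the $L^1$-convergence. This follows from Reshetnyak's lower semicontinuity theorem (see~\cite{Ambrosio00}) applied to the $\R^N$-valued measures $Du_n$: if $u_n\to u$ in $L^1(\O)$ then $Du_n\rightharpoonup Du$ weakly-$*$, and since $\Phi(x,\cdot)$ is convex, continuous and positively $1$-homogeneous by $(H1),(H3),(H4)$, one gets $\liminf_n\int_\O\Phi(x,\tfrac{dDu_n}{d|Du_n|})\,d|Du_n|\geq\int_\O\Phi(x,\tfrac{dDu}{d|Du|})\,d|Du|$, i.e. $\liminf_n\mathcal J_\Phi(u_n)\geq\mathcal J_\Phi(u)$. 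An alternative, closer to the dual definition of $TV$ recalled in Section~2, is to introduce the polar integrand $\Phi^\circ(x,\xi)=\sup_{p\neq0}\langle p,\xi\rangle/\Phi(x,p)$ and, using that $(H1)$--$(H4)$ make $\Phi(x,\cdot)$ the support function of $\{\xi:\Phi^\circ(x,\xi)\leq1\}$, prove the representation $\mathcal J_\Phi(u)=\sup\{\int_\O u\,\diverg\sigma\,dx:\ \sigma\in C^1_c(\O;\R^N),\ \Phi^\circ(x,\sigma(x))\leq1\ \forall x\in\O\}$, which exhibits $\mathcal J_\Phi$ as a supremum of $L^1$-continuous functionals and hence as $L^1$-l.s.c. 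Either way, since $\mathcal J_\Phi$ is $L^1$-l.s.c. and agrees with $J_\Phi$ on the functions used to define the relaxation, we conclude $\bar{J_\Phi}\geq\mathcal J_\Phi$.

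For the upper bound $\bar{J_\Phi}\leq\mathcal J_\Phi$ I would construct a recovery sequence. By Proposition~\ref{chapTV:approxBV} there is $u_n\in C^\infty(\O)$ with $u_n\to u$ in $L^1(\O)$ and $\int_\O|\nabla u_n|\to|Du|(\O)$. Then $Du_n\rightharpoonup Du$ weakly-$*$ and the total masses converge, so Reshetnyak's \emph{continuity} theorem (again valid because $\Phi$ is jointly continuous and $1$-homogeneous in $p$, $(H3)$--$(H4)$) gives $J_\Phi(u_n)=\int_\O\Phi(x,\nabla u_n)\,dx=\int_\O\Phi(x,\tfrac{dDu_n}{d|Du_n|})\,d|Du_n|\longrightarrow\int_\O\Phi(x,\tfrac{dDu}{d|Du|})\,d|Du|=\mathcal J_\Phi(u)$. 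Hence $\bar{J_\Phi}(u)\leq\limsup_n J_\Phi(u_n)=\mathcal J_\Phi(u)$, and combining with the previous step yields the claimed identity.

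The step requiring the most care is the $x$-dependence of $\Phi$: both the lower semicontinuity and the convergence of the recovery sequence rest on the correct form of Reshetnyak's theorems for integrands $\Phi(x,p)$ that are only continuous (not Lipschitz) in $x$, and, along the duality route, on the approximation of admissible fields $\sigma$ with $\Phi^\circ(x,\sigma(x))\leq1$ by smooth compactly supported ones, uniformly in $x$. It is exactly the linear-growth bound $(H2)$ and the joint continuity $(H4)$ that make these arguments go through; the remaining pieces — the homogeneity rewriting, the integration by parts on $C^1$, and the passage from $C^\infty(\O)$ to the definition of $\bar{J_\Phi}$ — are routine.
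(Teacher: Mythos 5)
Your proof is correct, but note that the paper itself does not prove this proposition: it is quoted as a known representation result, with the general case attributed to \cite{Demengel}, \cite{Bouchitte}, \cite{Aviles} (see also \cite{Ambrosio00}). So there is no ``paper proof'' to match; what you give is the standard self-contained argument, and it is consistent with the toolkit the paper uses elsewhere --- Reshetnyak's continuity theorem (Theorem 2.39 of \cite{Ambrosio00}) is exactly what is invoked in the proof of Proposition \ref{chapTV:CoareaAnisotropic}, and the dual representation you propose as an alternative route to lower semicontinuity is precisely Proposition \ref{chapTV:DefDualeBV_f}. Your two-step scheme (Reshetnyak lower semicontinuity for the $\liminf$ inequality; strict smooth approximation from Proposition \ref{chapTV:approxBV} plus Reshetnyak continuity for the recovery sequence) is the standard modern proof.

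Two small points would make the argument airtight. In the lower-bound step, $u_n\to u$ in $L^1(\O)$ does not by itself give $Du_n\rightharpoonup Du$ weakly-$*$: you should first discard sequences along which $\liminf_n J_\Phi(u_n)=+\infty$ (for which the inequality is vacuous) and then use the lower bound in $(H2)$ to get $\sup_k|Du_{n_k}|(\O)<\infty$ along a subsequence realizing the $\liminf$; weak-$*$ convergence to $Du$ then follows from distributional convergence together with this uniform mass bound. Second, in the continuity step the hypothesis of Theorem 2.39 in \cite{Ambrosio00} is that the integrand be continuous and bounded on $\O\times\S^{N-1}$, which is exactly what $(H2)$ and $(H4)$ provide; so the $x$-dependence causes no trouble, and you are right to flag this as the place where the structural hypotheses on $\Phi$ are actually used. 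With these remarks your argument is complete.
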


One of the first versions of this theorem was proven by Demengel and Temam in~\cite{Demengel} in case $J_\Phi(\mu)=\int \Phi(\mu)$. The result was progressively refined in articles such as~\cite{Bouchitte} and \cite{Aviles} which contains the general case in which we are interested. We would like to note that the hypotheses we made can be weakened a little (see \cite{Ambrosio00}). In \cite[Theorem 5.1]{Amar}, it is proven that the latter definition has a dual counterpart:

\begin{prop}\label{chapTV:DefDualeBV_f}
Let $\Phi$ be a Finsler integrand and $u\in\BV$ then
\begin{align*}
\bar{J_\Phi}(u)=\sup\left\{\int_{\O} u \diverg\vphi\ /\ \vphi\in C^1_c(\O,\R^N),\ \forall x\in\O\ \Phi^0(x,\vphi(x))\leq 1\right\}
\end{align*}
where $\Phi^0$ denotes the polar of $\Phi$ defined by
\begin{align*}
\Phi^0(x,\vphi(x))=\max\{p\cdot\vphi(x)\ /\ p\in\R^N, \Phi(x,p)\leq 1\}.\\
\end{align*}
\end{prop}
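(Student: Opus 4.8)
The plan is to establish the two inequalities separately, writing $G(u)$ for the right-hand side of the claimed identity. The only pointwise ingredient needed is Legendre--Fenchel duality: by (H1)--(H3), for each fixed $x$ the function $\Phi(x,\cdot)$ is nonnegative, convex, positively $1$-homogeneous and positive off the origin, so its polar $\Phi^0(x,\cdot)$ has the same properties, $\Phi^{00}=\Phi$, one has $\langle p,q\rangle\le\Phi(x,p)$ for all $p\in\R^N$ whenever $\Phi^0(x,q)\le1$, and $\Phi(x,p)=\sup\{\langle p,q\rangle\ :\ \Phi^0(x,q)\le1\}$ for all $p$. (When $\Phi$ is not reversible one adapts the sign of the competing vector fields wherever the pairing requires it; this causes no difficulty.) I would also record at the outset that, by (H2), $\Phi^0(x,\cdot)$ has linear growth with the same constant $C_\Phi$, so the sublevel set $\{q:\Phi^0(x,q)\le1\}$ lies in a fixed ball, and that $\Phi^0$ is continuous on $\O\times\R^N$ by (H1)--(H4).

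For the inequality $G(u)\le\bar J_\Phi(u)$, I would fix $u\in\BV$ and an admissible $\vphi\in C^1_c(\O,\R^N)$, integrate by parts to write $\int_\O u\diverg\vphi=-\int_\O\vphi\cdot dDu$, decompose $Du=\sigma_u\,|Du|$ with $|\sigma_u|=1$ $|Du|$-a.e., and apply the pointwise Fenchel inequality with $p=\sigma_u(x)$ to get $-\vphi(x)\cdot\sigma_u(x)\le\Phi(x,\sigma_u(x))$ for $|Du|$-a.e. $x$. Integrating against $|Du|$ and using the previous Proposition, which identifies $\int_\O\Phi(x,\sigma_u)\,d|Du|$ with $\bar J_\Phi(u)$ (on the absolutely continuous part $\Phi(x,\sigma_u)|\nabla u|=\Phi(x,\nabla u)$ by $1$-homogeneity, on the singular part one recovers $\Phi(x,D^su/|D^su|)|D^su|$), gives $\int_\O u\diverg\vphi\le\bar J_\Phi(u)$, and taking the supremum over $\vphi$ yields the claim. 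An alternative route for this direction: $G$ is a supremum of $L^1(\O)$-continuous linear functionals, hence $L^1$-lower semicontinuous; it equals $\int_\O\Phi(x,\nabla u)\,dx$ on $C^1(\O)\cap\BV$ by the smooth case of the duality below; and $\bar J_\Phi$ is the largest $L^1$-lsc functional dominated on smooth functions by $\int_\O\Phi(\cdot,\nabla\cdot)$, so $G\le\bar J_\Phi$.

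The reverse inequality $G(u)\ge\bar J_\Phi(u)$ is where the real work lies. By the previous Proposition it suffices, given $u\in\BV$ and $\e>0$, to produce an admissible $\vphi\in C^1_c(\O,\R^N)$ with $-\int_\O\vphi\cdot dDu\ge\int_\O\Phi(x,\sigma_u)\,d|Du|-\e$. I would first use the bipolar identity to select, for $|Du|$-a.e. $x$, a vector $q(x)$ with $\Phi^0(x,q(x))\le1$ and $\langle q(x),\sigma_u(x)\rangle=\Phi(x,\sigma_u(x))$; the set-valued map assigning to $x$ the corresponding (nonempty, compact, convex) set of maximizers is measurable thanks to the continuity of $\Phi$, so by a measurable selection theorem $q$ may be taken Borel, and it is $|Du|$-essentially bounded by the remark above. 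Since $C_c(\O,\R^N)$ is dense in $L^1(\O,|Du|;\R^N)$, I would approximate $q$ in $L^1(|Du|)$ by continuous compactly supported fields and then mollify, obtaining $\vphi_\delta\in C^1_c(\O,\R^N)$ with $\vphi_\delta\to q$ in $L^1(|Du|)$, so that $-\int_\O\vphi_\delta\cdot dDu\to\int_\O\langle q,\sigma_u\rangle\,d|Du|=\int_\O\Phi(x,\sigma_u)\,d|Du|$; what then remains is to keep the constraint.

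The main obstacle is precisely preserving admissibility under this approximation, because the target measure $|Du|$ may be singular, so one cannot afford pointwise-everywhere control of $q$. Here I would exploit that $\Phi^0(x,\cdot)$ is convex and $\Phi^0$ is jointly continuous: for $\vphi_\delta=q*\rho_\delta$, Jensen's inequality together with uniform continuity of $\Phi^0$ on compact sets gives $\Phi^0(x,\vphi_\delta(x))\le\sup_{|y-x|\le\delta}\Phi^0(x,q(y))\le1+\omega(\delta)$ with $\omega(\delta)\to0$, so dividing $\vphi_\delta$ by $1+\omega(\delta)$ restores the constraint at the cost of an $o(1)$ loss in the pairing; the preliminary Lusin correction making $q$ continuous and the cutoff near $\partial\O$ introduce, for the same convexity and continuity reasons, only an arbitrarily small additional error. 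Letting $\delta\to0$ and then $\e\to0$ gives $G(u)\ge\bar J_\Phi(u)$, which together with the first step yields equality. I expect this constraint-preservation step — and the measurable selection feeding it — to be the only part requiring genuine care; everything else is soft functional analysis and the already-established primal representation of $\bar J_\Phi$.
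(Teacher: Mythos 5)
The paper does not actually prove this proposition: it is quoted from \cite[Theorem 5.1]{Amar}, so there is no internal proof to compare against. Your argument is a self-contained proof along the classical Radon-measure duality route (integration by parts plus the Fenchel inequality for the easy direction; measurable selection of a maximizer in the bipolar formula, Lusin--Tietze approximation and mollification for the hard direction), taking as given the primal integral representation of $\bar J_\Phi$ stated just before --- which is legitimate, since the paper also takes that representation as a black box. The structure is sound and I believe it can be completed. Two points deserve more care than you give them. First, the constraint $\Phi^0(x,\vphi(x))\leq 1$ must hold \emph{everywhere} in $\O$, not merely $|Du|$-a.e.; after the Lusin--Tietze step the extended field $\tilde q$ may violate it off the good compact set $K$ by a factor as large as $C_\Phi^2$, and a global division by a constant (as in your $1+\omega(\delta)$ fix for the mollification error) would then destroy the pairing on $K$. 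The correct repair is the pointwise renormalization $\tilde q/\max\bigl(1,\Phi^0(\cdot,\tilde q)\bigr)$, which is continuous by continuity of $\Phi^0$, restores the constraint everywhere, and leaves $\tilde q$ unchanged on $K$; your phrasing suggests you are only tracking the error in the pairing, which is not where the difficulty sits. Second, the sign issue you wave away for non-reversible $\Phi$ is really a defect of the statement as written (after integration by parts the natural constraint is $\Phi^0(x,-\vphi(x))\leq 1$); for the reversible integrands used later in the paper it is immaterial, but it should be stated rather than deferred. With those two clarifications your proof is correct, and it has the merit of being elementary where the paper simply defers to the literature.
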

Henceforth we will not make any distinction between $J_\Phi$ and its $L^1$-lower semicontinuous envelope $\bar{J_\Phi}$.\\

\begin{deff}
Let $\Phi$ be a Finsler integrand. If $u\in\BV$ then the quantity $J_\Phi(u,\O)$ (or simply $J_\Phi(u)$ if $\O=\R^N$) is the \textit{anisotropic total variation} of $u$ in $\O$. If $E$ is a set of finite perimeter then the \textit{anisotropic perimeter} of $E$ in $\O$, denoted $P_\Phi(E,\O)$ (or $P_\Phi(E)$ if $\O=\R^N$) is the anisotropic total variation of $E$ namely
\begin{align*}
P_\Phi(E,\O)=\int_{\partial^*E\cap\O} \Phi(x,\nu_E)d\mathcal{H}^{N-1}(x).
\end{align*}
\end{deff}

\begin{rmq}\label{chapTV:per_compl}
If $\Phi$ is a reversible Finsler integrand then for any set $E$ of finite perimeter in $\O$ one has
$P_\Phi(E,\O)=P_\Phi(\O\setminus E,\O).$
\end{rmq}
Soon, we will need two generalizations of the coarea formula for the anisotropic total variation. Let us state them here:
\begin{prop}\label{chapTV:CoareaWeight}
Let $u\in\BV$ and $w:\O\rightarrow\R$ be a non-negative Borelian weight. Then one has
$$\int_\O w|Du|=\int_{-\infty}^\infty\left(\int_\O w|D\chi_{\{u>t\}}|\right)dt=\int_{-\infty}^\infty P_w(\{u>t\},\O)dt.$$
\end{prop}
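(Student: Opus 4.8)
The plan is to reduce to the classical (isotropic) coarea formula of Theorem~\ref{chapTV:CoareaAFP} by a layer‑cake decomposition of the weight $w$ together with the Fubini--Tonelli theorem, all integrands being non‑negative so that no integrability issue arises.

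First I would write, for the Borel function $w\geq 0$, the layer‑cake identity $w(x)=\int_0^\infty \chi_{\{w>s\}}(x)\,ds$ valid for every $x\in\O$. Integrating against the non‑negative Radon measure $|Du|$ and using Tonelli,
\begin{align*}
\int_\O w\,|Du| = \int_\O\int_0^\infty \chi_{\{w>s\}}(x)\,ds\,|Du|(x) = \int_0^\infty |Du|(\{w>s\})\,ds .
\end{align*}
For each fixed $s$ the set $\{w>s\}$ is a Borel subset of $\O$, so Theorem~\ref{chapTV:CoareaAFP} applied with $B=\{w>s\}$ gives $|Du|(\{w>s\}) = \int_{-\infty}^\infty |D\chi_{\{u>t\}}|(\{w>s\})\,dt$.

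Substituting this into the previous line and swapping the order of integration in $s$ and $t$ — legitimate by Tonelli, the integrand being non‑negative — yields
\begin{align*}
\int_\O w\,|Du| = \int_{-\infty}^\infty\int_0^\infty |D\chi_{\{u>t\}}|(\{w>s\})\,ds\,dt = \int_{-\infty}^\infty\left(\int_\O w\,|D\chi_{\{u>t\}}|\right)dt ,
\end{align*}
where the inner equality is once more the layer‑cake formula, now applied to the measure $|D\chi_{\{u>t\}}|$. The final equality $\int_\O w\,|D\chi_{\{u>t\}}| = P_w(\{u>t\},\O)$ is just the notation introduced for the weighted perimeter, which closes the chain.

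The only genuinely delicate point is the bookkeeping for Fubini--Tonelli: one must know that $(s,t)\mapsto |D\chi_{\{u>t\}}|(\{w>s\})$ is jointly measurable. This follows from the fact that $t\mapsto |D\chi_{\{u>t\}}|(B)$ is measurable for every Borel $B$ (this measurability is already part of the statement of the classical coarea formula) together with the monotonicity of $s\mapsto\{w>s\}$, which exhibits the map as a monotone limit in $s$ of measurable functions of $t$. An alternative that sidesteps this discussion is to approximate $w$ monotonically from below by Borel simple functions $w_n=\sum_i c_i^{(n)}\chi_{B_i^{(n)}}$, for which the asserted identity reduces by linearity to the classical coarea formula applied to the Borel sets $B_i^{(n)}$, and then to pass to the limit on both sides by monotone convergence. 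Either way the argument is essentially routine; the interchange of limits is the one place to be careful.
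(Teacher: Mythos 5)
Your proof is correct. It takes the same overall strategy as the paper — decompose the weight $w$ into a superposition of characteristic functions of Borel sets, apply the classical coarea formula (Theorem~\ref{chapTV:CoareaAFP}) to each such set, and interchange the integrations — but with a different decomposition. You use the continuous layer-cake identity $w(x)=\int_0^\infty\chi_{\{w>s\}}(x)\,ds$, whereas the paper invokes the Evans--Gariepy decomposition $w=\sum_{k=1}^\infty\frac{1}{k}\chi_{A_k}$ into a countable sum of indicators of Borel sets $A_k$. The paper's discrete decomposition buys a cleaner bookkeeping: the interchange is just a swap of a countable sum with an integral of non-negative terms, so the joint-measurability discussion that you rightly flag as the delicate point of the layer-cake route (measurability of $(s,t)\mapsto|D\chi_{\{u>t\}}|(\{w>s\})$) never arises. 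Your version buys self-containedness — it needs no external structure theorem for non-negative Borel functions — at the price of that measurability argument, which you handle adequately via monotonicity in $s$ (or via your fallback of monotone approximation by simple functions, which is in spirit exactly the paper's proof). Both arguments are complete and correct.
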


\begin{proof}
By \cite[Theorem 7]{Evans92}, there exists a sequence of Borelian sets ${(A_k)}_{k\in\N}$ such that
$$w=\sum_{k=1}^\infty \frac{1}{k}\chi_{A_k}.$$
Therefore by Fubini and then by Theorem \ref{chapTV:CoareaAFP},
\begin{align*}
\int_\O w|Du|&=\sum_{k=1}^\infty\frac{1}{k}\int_{A_k}|Du|=\sum_{k=1}^\infty\frac{1}{k}\int_{-\infty}^\infty|D\chi_{\{u>t\}}|(A_k)dt\\
&=\int_{-\infty}^\infty\left(\sum_{k=1}^\infty\frac{1}{k}\int_{A_k}|D\chi_{\{u>t\}}|\right)dt=\int_{-\infty}^\infty\left(\int_\O w|D\chi_{\{u>t\}}|\right)dt.
\end{align*}
\end{proof}

The following proposition is stated in \cite[Remark 4.4]{Amar} without any proof:
\begin{prop}\label{chapTV:CoareaAnisotropic}
Let $u\in\BV$ and $\Phi:\O\times\R^N\rightarrow\R$ be a Finsler integrand. Then then one has
$$\int_\O \Phi(x,Du)=\int_{-\infty}^\infty P_\Phi(\{u>t\},\O)dt.$$
\end{prop}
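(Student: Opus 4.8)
The plan is to prove the identity first for $u\in W^{1,1}(\O)$ — hence in particular for the functions in the domain of $J_\Phi$ — and then to bootstrap it to an arbitrary $u\in BV(\O)$ by relaxation. Throughout I identify $\int_\O\Phi(x,Du)$ with the relaxed functional $\bar J_\Phi(u)$, I use that $P_\Phi(E,\O)=\bar J_\Phi(\chi_E)$ by definition, and I use that $\bar J_\Phi$, being a lower semicontinuous envelope, is $L^1(\O)$-lower semicontinuous and is the largest $L^1$-lsc functional on $BV(\O)$ lying below $J_\Phi$.

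\textbf{Step 1 (the Sobolev case).} For $u\in W^{1,1}(\O)$ I would apply the weighted coarea formula of Proposition~\ref{chapTV:CoareaWeight} to the bounded non-negative Borel weight $w(x):=\Phi\bigl(x,\nabla u(x)/|\nabla u(x)|\bigr)$ (with $w:=0$ where $\nabla u=0$). Since $Du=\nabla u\,dx$ and $\Phi$ is positively $1$-homogeneous in $p$ by (H3), the left-hand side of that formula is $\int_\O\Phi(x,\nabla u/|\nabla u|)\,|\nabla u|\,dx=\int_\O\Phi(x,\nabla u)\,dx=\int_\O\Phi(x,Du)$ (here $D^su=0$). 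On the right-hand side, Theorem~\ref{chapTV:CoareaAFP} ensures $\{u>t\}$ has finite perimeter for a.e.\ $t$, so $|D\chi_{\{u>t\}}|(B)=\mathcal H^{N-1}(B\cap\partial^*\{u>t\})$; moreover, by the classical description of the level sets of a Sobolev function (see \cite{Ambrosio00,Ziemer}), for a.e.\ $t$ the reduced boundary $\partial^*\{u>t\}\cap\O$ coincides $\mathcal H^{N-1}$-a.e.\ with $\{u=t\}\cap\O$ and the unit normal $\nu_{\{u>t\}}$ equals $\nabla u/|\nabla u|$ there. Hence $\int_\O w\,|D\chi_{\{u>t\}}|=P_\Phi(\{u>t\},\O)$ for a.e.\ $t$, and Proposition~\ref{chapTV:CoareaWeight} gives the claimed identity for $u\in W^{1,1}(\O)$.

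\textbf{Step 2 (one inequality on $BV$).} Set $\tilde I(v):=\int_{-\infty}^\infty P_\Phi(\{v>t\},\O)\,dt$ for $v\in BV(\O)$ (the integrand is measurable in $t$, being the composition of the Borel map $\bar J_\Phi$ with the essentially monotone map $t\mapsto\chi_{\{v>t\}}$). I would first check that $\tilde I$ is $L^1(\O)$-lower semicontinuous: if $v_n\to v$ in $L^1(\O)$, pass to a subsequence realising $\liminf_n\tilde I(v_n)$ and converging a.e.; then $\chi_{\{v_n>t\}}\to\chi_{\{v>t\}}$ in $L^1(\O)$ for all but countably many $t$, and lower semicontinuity of $E\mapsto P_\Phi(E,\O)=\bar J_\Phi(\chi_E)$ together with Fatou's lemma give $\tilde I(v)\le\liminf_n\tilde I(v_n)$. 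By Step~1, $\tilde I$ coincides with $J_\Phi$ on $W^{1,1}(\O)$, while $J_\Phi\equiv+\infty$ elsewhere on $BV(\O)$, so $\tilde I\le J_\Phi$ on $BV(\O)$; being $L^1$-lsc, $\tilde I$ therefore lies below the lower semicontinuous envelope, i.e.
\begin{align*}
\int_{-\infty}^\infty P_\Phi(\{u>t\},\O)\,dt\;\le\;\int_\O\Phi(x,Du)\qquad\text{for all }u\in BV(\O).
\end{align*}

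\textbf{Step 3 (the reverse inequality).} For the opposite bound I would use the dual representation of Proposition~\ref{chapTV:DefDualeBV_f}. Fix $\varphi\in C^1_c(\O,\R^N)$ with $\Phi^0(x,\varphi(x))\le1$ for every $x$. Writing $u=\int_0^\infty\chi_{\{u>t\}}\,dt-\int_{-\infty}^0\chi_{\{u\le t\}}\,dt$, using $\int_\O\diverg\varphi=0$ (so that $\int_\O\chi_{\{u\le t\}}\diverg\varphi=-\int_\O\chi_{\{u>t\}}\diverg\varphi$) and Fubini on each half-line (legitimate since $\int_\O|u|\,|\diverg\varphi|<\infty$), one obtains
\begin{align*}
\int_\O u\,\diverg\varphi=\int_{-\infty}^\infty\Bigl(\int_\O\chi_{\{u>t\}}\diverg\varphi\Bigr)\,dt\;\le\;\int_{-\infty}^\infty P_\Phi(\{u>t\},\O)\,dt,
\end{align*}
because for each $t$ the inner integral is at most $\bar J_\Phi(\chi_{\{u>t\}})=P_\Phi(\{u>t\},\O)$ by Proposition~\ref{chapTV:DefDualeBV_f} applied to $\chi_{\{u>t\}}$ (if $\{u>t\}$ has infinite perimeter this bound is $+\infty$). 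Taking the supremum over all admissible $\varphi$ and using Proposition~\ref{chapTV:DefDualeBV_f} for $u$ yields $\int_\O\Phi(x,Du)\le\int_{-\infty}^\infty P_\Phi(\{u>t\},\O)\,dt$, which together with Step~2 proves the proposition.

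\textbf{Where the difficulty lies.} The Sobolev case is essentially forced once the weighted coarea formula and the identification of the level-set normals are available; the real point is that the two halves of the equality need different mechanisms. Lower-semicontinuity and approximation arguments — Step~2 — only ever produce $\int_\O\Phi(x,Du)\ge\int P_\Phi(\{u>t\})\,dt$, so the reverse inequality must come from the duality of Step~3 (a Reshetnyak-type continuity theorem would be an alternative, but it is not among the results recalled here). The only further points requiring care are the measurability of $t\mapsto P_\Phi(\{u>t\},\O)$ and the almost-everywhere finiteness of the perimeters of the level sets, both furnished by the isotropic coarea formula.
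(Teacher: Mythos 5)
Your proof is correct, and its skeleton matches the paper's: one inequality comes from the weighted coarea formula (Proposition \ref{chapTV:CoareaWeight}) applied with the weight $\Phi(x,\nabla u/|\nabla u|)$ on regular functions, and the reverse inequality comes from the dual representation of Proposition \ref{chapTV:DefDualeBV_f} via the layer-cake formula and Fubini --- your Step 3 is essentially the paper's second half. The genuine difference is how the first inequality is transported from regular functions to a general $u\in\BV$. The paper takes a smooth approximating sequence with $\int_\O|\nabla u_n|\to\int_\O|Du|$ (Proposition \ref{chapTV:approxBV}) and invokes Reshetnyak's continuity theorem to get $J_\Phi(u_n)\to\bar{J_\Phi}(u)$, then concludes with Fatou and the lower semicontinuity of $P_\Phi$. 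You instead note that $\tilde I(v)=\int_{-\infty}^\infty P_\Phi(\{v>t\},\O)\,dt$ is $L^1$-lower semicontinuous, coincides with $J_\Phi$ on $W^{1,1}(\O)$ and lies below $J_\Phi$ extended by $+\infty$, hence lies below the envelope $\bar{J_\Phi}$ by the maximality property of relaxation. Your route dispenses with Reshetnyak entirely, at the price of leaning on the representation formula identifying $\bar{J_\Phi}(u)$ with $\int_\O\Phi(x,Du)$ (which the paper also assumes throughout, so nothing is lost); both routes need the same lower semicontinuity of $\tilde I$ and the same identification $\nu_{\{u>t\}}=\nabla u/|\nabla u|$ on the reduced boundaries of the level sets, which you make explicit where the paper leaves it tacit.
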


\begin{proof}
Applying Proposition \ref{chapTV:approxBV}, one can pick an approximating sequence ${(u_n)}_{n\in\N}$ of $C^\infty$ functions such that $u_n\to u$ in $L^1(\O)$ and $\int_\O|\nabla u_n| \to \int_\O |Du|$.\\
If we set $$w(x)=\Phi\left(x,\frac{\nabla u_n(x)}{|\nabla u_n(x)|}\right)$$ whenever $\nabla u_n(x)\not=0$ then by Proposition \ref{chapTV:CoareaWeight} the result holds for $u_n$, namely
\begin{align*}
\int_\O \Phi\left(x,\frac{\nabla u_n(x)}{|\nabla u_n(x)|}\right)|\nabla u_n(x)|dx&=\int_{-\infty}^\infty\left(\int_\O \Phi\left(\cdot\ ,\frac{\nabla u_n}{|\nabla u_n|}\right)|D\chi_{\{u_n>t\}}|\right)dt\\
&=\int_{-\infty}^\infty P_\Phi(\{u_n>t\},\O)dt.
\end{align*}
Finally by Reshetnyak Theorem 2.39 in \cite{Ambrosio00}, we can send $n\to+\infty$ and we get
\begin{align*}
\int_\O \Phi\left(\cdot,\frac{Du}{|Du|}\right)d|Du|&\geq \liminf_{n\to\infty} \int_{-\infty}^\infty P_\Phi(\{u_n>t\},\O)dt\\
&\geq  \int_{-\infty}^\infty P_\Phi(\{u>t\},\O)dt
\end{align*}
where in the second line we used the lower semicontinuity of $P_\Phi$ in conjunction with Fatou's lemma.

To prove the converse inequality, 
let us pick a candidate $\vphi\in C^1_c(\O,\R^N)$ such that for any $x\in\O$, $\Phi^0(x,\vphi(x))\leq 1$. Then, by the layer cake formula and by application of Fubini and Proposition \ref{chapTV:DefDualeBV_f},
\begin{align*}
\int_\O u\diverg\vphi&=\int_{-\infty}^{\infty}\int_\O \chi_{\{u>t\}}(x)\diverg\vphi(x)\ dxdt\\
&\leq\int_{-\infty}^{\infty}P_\Phi\left(\{u>t\},\O\right)dt,
\end{align*}
which proves the result taking the supremum of the left hand side over all admissible $\vphi$.
\end{proof}

\subsection{The minimization problem for functions}
In the sequel, we are going to consider the following energy 
\begin{align}\label{chapTV:ROF_f}
\E(u)=\int_\O \Phi(x,Du)+\int_\O \Psi(x,u(x))dx 
\end{align}
over the space $\BV$. Henceforth, we assume that $\Phi$ is a Finsler integrand and that 
\newline
\begin{itemize}
\item[$\textbf{(H9)}$] $\Psi(x,t):\O\times\R\rightarrow\R$ is measurable in $x$, strictly convex and coercive in $t$, that is to say 
\begin{align*}
\lim_{t\to\pm\infty}\Psi(x,t)=+\infty,
\end{align*}
and such that
\begin{align}\label{chapTV:hyp_psi0}
\Psi(\cdot,0)\in L^1(\O),
\end{align}
\begin{align}\label{chapTV:hyp_psi}
\partial_t^{-}\Psi(\cdot,t)\in L^1(\O)\ \forall t\in\R.
\end{align}

\end{itemize}

\begin{rmq}\label{chapTV:rmq_cvx}
\begin{trivlist}
\item [$(i)$] We recall that function $\Psi(x,\cdot)$ being convex for any $x\in \R^N$ it is therefore locally Lipschitz continuous on $\R$ (see \cite{Ekeland} for instance). We therefore denote 
\begin{align*}
\partial_t\Psi(x,t):=\partial_t^-\Psi(x,t)
\end{align*}
the left derivative that exists at any $t\in\R$. 
\item [$(ii)$] Clearly the energy (\ref{chapTV:ROF}) is a special case of (\ref{chapTV:ROF_f}) since it amounts to take $\Phi(x,p)=|p|$ and $\Psi(x,u(x))=\frac{1}{2}{(u(x)-g(x))}^2$ for some $g\in L^2(\O)$ and bounded $\O$. Observe that one can also consider a general data fidelity term of the form $\Psi(x,u(x))=\frac{1}{q}{(u(x)-g(x))}^q$ for some $g\in L^q(\O)$ with $q>1$.
\item[$(iii)$] Let us note that for $t>s$ and $x\in\O$,
\begin{align}
\partial_t\Psi(x,s)(s-t)\leq\Psi(x,s)-\Psi(x,t)\leq \partial_t\Psi(x,t)(t-s)
\end{align}
which, in conjunction with (\ref{chapTV:hyp_psi0}), implies that
\begin{align*}
\Psi(\cdot,t)\in L^1(\O)\ \forall t\in\R.
\end{align*}
For further reference, let us also remark that if $t_n\to t\in\R$ then for $n$ large,
\begin{align*}
|\Psi(x,t_n)-\Psi(x,t)|\leq \sup_{k\geq n}|t_k-t|\max(|\partial_t\Psi(x,t-1)|,|\partial_t\Psi(x,t+1)|)
\end{align*}
hence in particular $\Psi(\cdot,t_n)\to\Psi(\cdot,t)$ in $L^1(\O)$.
\item [$(iv)$] We could have replaced assumption (\ref{chapTV:hyp_psi}) in $(H9)$ by
\begin{align*}
\Psi(\cdot,t)\geq \psi\in L^1(\O)\ \forall t\in\R,
\end{align*}
if one considered \textit{local minimizers} of the ROF problem on an unbounded domain but this would lead us too far. See the beginning of \cite[Section 5]{ChamRegularity} for further details.\\
\end{trivlist}
\end{rmq}

Applying the direct method, we get readily
\begin{prop} Let $\Phi$ be a Finsler integrand and $\Psi$ measurable in $x$ and strictly convex in the second variable, then
$\E$ has a unique minimizer $u$ in the space $\BV$.
\end{prop}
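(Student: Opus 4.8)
The plan is to apply the direct method of the calculus of variations, whose three ingredients here are coercivity of $\E$ along minimizing sequences, $L^1$-lower semicontinuity of $\E$, and compactness in $\BV$; uniqueness will then be a consequence of strict convexity. First I would record that $\inf_{\BV}\E<+\infty$: by $(H3)$ (or equivalently $(H2)$) one has $\Phi(x,0)=0$, and $\Psi(\cdot,0)\in L^1(\O)$ by $(H9)$, so the constant function $0$ is admissible with $\E(0)=\int_\O\Psi(\cdot,0)\,dx<\infty$.

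Fix a minimizing sequence $(u_n)$, say $\E(u_n)\le\inf\E+1=:M$. By the lower bound in $(H2)$, $\int_\O\Phi(x,Du_n)\ge C_\Phi^{-1}|Du_n|(\O)\ge 0$, so $\int_\O\Psi(x,u_n)\le M$; conversely, the convexity of $\Psi(x,\cdot)$ gives the affine minorant $\Psi(x,t)\ge\Psi(x,0)+\partial_t\Psi(x,0)\,t$, and, combined with the coercivity of $\Psi$ in $t$ and the integrability conditions of $(H9)$, this yields a lower bound $\int_\O\Psi(x,u_n)\ge-C$; together with $\int_\O\Phi(x,Du_n)\le M-\int_\O\Psi(x,u_n)$ and $(H2)$ this bounds $|Du_n|(\O)$, and then the Poincaré--Sobolev inequality in $\BV$ on the bounded Lipschitz domain $\O$ bounds $\|u_n\|_{L^1(\O)}$ as well, so that $\bigl(\|u_n\|_{L^1(\O)}+|Du_n|(\O)\bigr)$ is bounded. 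By Theorem \ref{chapTV:Rellich} there are a subsequence (not relabelled) and $u\in\BV$ with $u_n\to u$ in $L^1(\O)$, and after a further extraction $u_n\to u$ a.e.

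For lower semicontinuity, since $J_\Phi$ has been identified with its $L^1$-lower semicontinuous envelope $\overline{J_\Phi}$, one has $\int_\O\Phi(x,Du)\le\liminf_n\int_\O\Phi(x,Du_n)$. For the fidelity term, $\Psi(x,\cdot)$ is continuous (Remark \ref{chapTV:rmq_cvx}$(i)$), so $\Psi(x,u_n(x))\to\Psi(x,u(x))$ a.e.; applying Fatou's lemma to the nonnegative quantities $\Psi(x,u_n(x))-\Psi(x,0)-\partial_t\Psi(x,0)u_n(x)\ge 0$ and using $u_n\to u$ in $L^1(\O)$ gives $\int_\O\Psi(x,u(x))\,dx\le\liminf_n\int_\O\Psi(x,u_n(x))\,dx$. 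Adding the two inequalities, $\E(u)\le\liminf_n\E(u_n)=\inf\E$, so $u$ is a minimizer.

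For uniqueness, by $(H1)$ the map $u\mapsto\int_\O\Phi(x,Du)$ is convex (it is a supremum of affine functionals by Proposition \ref{chapTV:DefDualeBV_f}), while $u\mapsto\int_\O\Psi(x,u)\,dx$ is strictly convex since $\Psi(x,\cdot)$ is strictly convex and two distinct $\BV$ functions differ on a set of positive measure; hence $\E$ is strictly convex and has at most one minimizer. I expect the only genuinely delicate point to be the coercivity/boundedness step --- extracting a uniform $\BV$ bound from $\E(u_n)\le M$, which is exactly where the coercivity of $\Psi$ in $t$ and the $L^1$-integrability conditions of $(H9)$ enter; everything else is a routine application of the results recalled above, which is presumably why the authors write that it follows ``readily''.
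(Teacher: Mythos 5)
Your proposal follows essentially the same route as the paper: a minimizing sequence bounded in $\BV$ via $(H2)$ and $\E(u_n)\le\E(0)<\infty$, Rellich's compactness, lower semicontinuity of $J_\Phi$ together with Fatou for the fidelity term, and strict convexity for uniqueness. The extra detail you supply on the coercivity/$L^1$-bound and on which nonnegative quantity Fatou is applied to only makes explicit steps the paper leaves implicit, so there is nothing substantive to add.
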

\begin{proof}
Consider a minimizing sequence ${(u_n)}_{n\in\N}$ such that $\E(u_n)\to \inf_\O \E$. As $\E(u_n)\leq \E(0)<+\infty$, assumption $(H2)$ implies that ${(u_n)}_{n\in\N}$ is bounded in $BV(\O)$. Then, Rellich's theorem asserts that up to extraction of a subsequence (still denoted ${(u_n)}_{n\in\N}$) it converges in $L^1(\O)$ and also pointwise to some $u\in \BV$. By lower-semicontinuity of $J_\Phi$ and Fatou, we get
\begin{align*}
\E(u)\leq \liminf_{n\to\infty }\E(u_n)=\inf_\BV \E(u).
\end{align*}
This proves the existence of a minimizer namely $u$. It is unique by strict convexity of $\E$.
\end{proof}

\begin{rmq}
It is also possible to reason in a slightly different way to avoid using Rellich's theorem in case $\Psi(x,u(x))=\frac{1}{q}(u(x)-g(x))^q$. This way we also avoid the regularity assumption on $\partial\O$ (see \cite{ChamTV,ChamPNL}). 
\end{rmq}

\subsection{The minimization problem for level sets}
We assume, for the time being, that $\Phi$ is a Finsler integrand and $\Psi$ is as above. 
Let us introduce the following minimal surface problems parametrized by $t\in\R$
\begin{align}\label{chapTV:LevelSetPb_f}
\min_E P_\Phi(E,\O)+\int_E \partial_t\Psi(x,t)dx.
\end{align}
The minimization is carried out on all sets of finite perimeter.
Simply reasoning as in the previous proof we get the existence of minimizers (and again, in some cases, it is possible to avoid using Rellich's theorem as was done in \cite{ChamPNL}). Obviously, we may not have a unique solution. Given $t\in\R$, we shall denote $E_t$ a solution of (\ref{chapTV:LevelSetPb_f}).\\

The following comparison result similar to \cite[Lemma 2.1]{ChamMCM} and \cite[Lemma 4]{Alter} will be needed:\\
\begin{lem}\label{chapTV:lem_comparaison}
Let $f_1,f_2\in L^1(\O)$ and $E$, $F$ be respectively minimizers of
\begin{align*}
\min_{E} P_\Phi(E,\O)-\int_{E} f_1(x)dx\ \ \ \text{ and }\ \ \ \min_{F} P_\Phi(F,\O)-\int_F f_2(x)dx
\end{align*}
Then, if $f_1<f_2$ a.e., $|E\setminus F|=0$ (\textit{i.e.} $E\subset F$ up to a negligible set).
\end{lem}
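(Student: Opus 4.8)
The plan is to use the classical minimality-competition argument: test each minimizer against the natural modifications $E\cap F$ and $E\cup F$, exploit the submodularity of the anisotropic perimeter, and play the two strict-inequality energy budgets against each other. First I would record the submodularity inequality
\begin{align*}
P_\Phi(E\cap F,\O)+P_\Phi(E\cup F,\O)\leq P_\Phi(E,\O)+P_\Phi(F,\O),
\end{align*}
which holds for the anisotropic perimeter exactly as for the Euclidean one (it follows from the local structure of $\partial^*(E\cap F)$ and $\partial^*(E\cup F)$ together with the representation $P_\Phi(\cdot,\O)=\int_{\partial^*\cdot\cap\O}\Phi(x,\nu)\,d\mathcal H^{N-1}$); one may cite this or sketch it via the dual/BV-definition of $P_\Phi$. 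I would also note the set-theoretic identities $\chi_{E\cap F}+\chi_{E\cup F}=\chi_E+\chi_F$ so that the volume/fidelity terms add up exactly: $\int_{E\cap F}f_i+\int_{E\cup F}f_i=\int_E f_i+\int_F f_i$ for $i=1,2$.

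Next I would set up the competition. Since $E$ minimizes the functional with $f_1$, testing against $E\cap F$ gives
\begin{align*}
P_\Phi(E,\O)-\int_E f_1\leq P_\Phi(E\cap F,\O)-\int_{E\cap F}f_1,
\end{align*}
and since $F$ minimizes the functional with $f_2$, testing against $E\cup F$ gives
\begin{align*}
P_\Phi(F,\O)-\int_F f_2\leq P_\Phi(E\cup F,\O)-\int_{E\cup F}f_2.
\end{align*}
Adding these two inequalities and rearranging using the perimeter submodularity, the four perimeter terms cancel favourably, leaving
\begin{align*}
\int_{E\cup F}f_2-\int_F f_2+\int_{E\cap F}f_1-\int_E f_1\leq 0,
\end{align*}
i.e. $\int_{(E\cup F)\setminus F}f_2\leq \int_{E\setminus(E\cap F)}f_1$. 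But $(E\cup F)\setminus F=E\setminus F=E\setminus(E\cap F)$ (up to null sets), so this reads $\int_{E\setminus F}f_2\leq\int_{E\setminus F}f_1$, hence $\int_{E\setminus F}(f_2-f_1)\leq 0$. Since $f_2-f_1>0$ a.e. by hypothesis, this forces $|E\setminus F|=0$, which is the conclusion.

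The only subtlety — and the step I would treat most carefully — is making the perimeter submodularity airtight in the \emph{anisotropic} setting and checking that when one combines the two minimality inequalities the perimeter terms genuinely telescope rather than just bound (one needs $P_\Phi(E\cap F)+P_\Phi(E\cup F)-P_\Phi(E)-P_\Phi(F)\leq 0$ on the correct side of the chain of inequalities; a sign slip here is the classic pitfall). A clean way to handle submodularity uniformly is to invoke the BV characterization: $P_\Phi(A,\O)=\overline{J_\Phi}(\chi_A)$ and use that $\chi_{E\cap F}=\min(\chi_E,\chi_F)$, $\chi_{E\cup F}=\max(\chi_E,\chi_F)$ together with the fact that for any $u,v\in BV$, $\overline{J_\Phi}(\min(u,v))+\overline{J_\Phi}(\max(u,v))\leq \overline{J_\Phi}(u)+\overline{J_\Phi}(v)$ — a standard consequence of the structure of $D(\min(u,v))$ and $D(\max(u,v))$ and the 1-homogeneity and convexity of $\Phi$ in the gradient variable. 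Everything else is a routine rearrangement. Note finally that the lemma is stated for \emph{arbitrary} minimizers $E,F$ (not the minimal or maximal ones), which is exactly what the argument delivers, and no regularity or reversibility of $\Phi$ is needed.
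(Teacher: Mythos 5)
Your proof is correct and follows essentially the same route as the paper: submodularity of $P_\Phi$ under union/intersection, then testing $E$ against $E\cap F$ and $F$ against $E\cup F$ and adding. The only cosmetic difference is how submodularity is justified — the paper derives it by combining the triangle inequality for $J_\Phi(\chi_E+\chi_F)$ with the anisotropic coarea formula, whereas you invoke the min/max structure of $D\chi_{E\cap F}$ and $D\chi_{E\cup F}$; both are standard and equally valid.
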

\begin{proof}
First, observe that by the triangle inequality
$$J_\Phi(\chi_E+\chi_F,\O)\leq P_\Phi(E,\O)+P_\Phi(F,\O).$$
Whereas, by the coarea formula we also know that
\begin{align*}
J_\Phi(\chi_E+\chi_F,\O)&=\int_0^2P_\Phi(\{\chi_E+\chi_F>t\},\O)dt\\
&=P_\Phi(E\cup F,\O)+P_\Phi(E\cap F,\O).
\end{align*}
This proves that
\begin{align}\label{chapTV:InegPer}
P_\Phi(E\cap F,\O)+P_\Phi(E\cup F,\O)\leq P_\Phi(E,\O)+P_\Phi(F,\O).
\end{align}
Now, by minimality of $E$ and $F$, we get
\begin{align*}
P_\Phi(E,\O)-\int_E f_1(x)dx \leq P_\Phi(E\cap F,\O)-\int_{E\cap F} f_1(x)dx,\\
P_\Phi(F,\O)-\int_F f_2(x)dx \leq P_\Phi(E\cup F,\O)-\int_{E\cup F} f_2(x)dx.
\end{align*}
Adding both inequalities and using (\ref{chapTV:InegPer}), we have
$$\int_{E\setminus F}\left(f_1(x)-f_2(x)\right)dx\geq 0$$
hence the result since $f_1<f_2$ a.e.
\end{proof}
In particular, we observe that\\
\begin{lem}\label{chapTV:NivCroissants}
If $t<t'$ and $E_{t}$, $E_{t'}$ are the corresponding minimizers of the minimal surface problem (\ref{chapTV:LevelSetPb_f}) then 
$E_{t'}\subset E_{t}$ up to a negligible set.
\end{lem}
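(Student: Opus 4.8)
The plan is to read off this statement directly from the comparison Lemma~\ref{chapTV:lem_comparaison} once the level‑set problem~(\ref{chapTV:LevelSetPb_f}) is written in the exact form that lemma requires. For $t\in\R$ set $f_t(x):=-\partial_t\Psi(x,t)$. By assumption~(\ref{chapTV:hyp_psi}) we have $f_t\in L^1(\O)$, and the problem~(\ref{chapTV:LevelSetPb_f}) reads
$$\min_E\ P_\Phi(E,\O)+\int_E\partial_t\Psi(x,t)\,dx=\min_E\ P_\Phi(E,\O)-\int_E f_t(x)\,dx,$$
so $E_t$ is, by definition, a minimizer of the first functional in Lemma~\ref{chapTV:lem_comparaison} with datum $f_t$, and likewise $E_{t'}$ with datum $f_{t'}$.

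The one point that needs care is that we must produce a \emph{strict} inequality $f_{t'}<f_t$ a.e., and this is exactly where the strict convexity of $\Psi(x,\cdot)$ in $(H9)$ enters: the left derivative of a strictly convex function of one real variable is strictly increasing. Concretely, for $t<t'$ and any $x\in\O$ pick $s\in(t,t')$; then by convexity and strict convexity,
$$\partial_t\Psi(x,t)\le\frac{\Psi(x,s)-\Psi(x,t)}{s-t}<\frac{\Psi(x,t')-\Psi(x,s)}{t'-s}\le\partial_t\Psi(x,t'),$$
so that $f_{t'}(x)=-\partial_t\Psi(x,t')<-\partial_t\Psi(x,t)=f_t(x)$ for every $x\in\O$ (in particular a.e.).

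It then suffices to apply Lemma~\ref{chapTV:lem_comparaison} with $f_1:=f_{t'}$, $f_2:=f_t$, $E:=E_{t'}$ and $F:=E_t$: since $f_1<f_2$ a.e., the lemma yields $|E_{t'}\setminus E_t|=0$, i.e.\ $E_{t'}\subset E_t$ up to a negligible set, which is the claim. There is no genuine obstacle here beyond bookkeeping: the only things to get right are the sign convention turning~(\ref{chapTV:LevelSetPb_f}) into the ``$P_\Phi-\int_E f$'' form, the orientation of the inequality (so that the larger datum goes with the larger set), and the use of \emph{strict} convexity — not mere convexity — to upgrade the monotonicity of $t\mapsto\partial_t\Psi(x,t)$ to a strict one, as required by the hypothesis $f_1<f_2$ of Lemma~\ref{chapTV:lem_comparaison}.
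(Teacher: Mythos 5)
Your proof is correct and follows exactly the paper's own route: the paper's one‑line argument is precisely that strict convexity of $\Psi(x,\cdot)$ forces $\partial_t\Psi(\cdot,t)<\partial_t\Psi(\cdot,t')$ for $t<t'$, after which Lemma~\ref{chapTV:lem_comparaison} gives the inclusion. You have merely made explicit the sign bookkeeping and the three‑chord argument for strict monotonicity of the left derivative, both of which are carried out correctly.
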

\begin{proof}
Note that the strict convexity implies $\partial_t\Psi(\cdot,t)<\partial_t\Psi(\cdot,t')$ thus the statement follows from the previous lemma.
\end{proof}

Knowing this we can, as in \cite{ChamTV}, introduce 
\begin{align*}
E^-_t=\bigcup_{t'>t} E_{t'},\ E^+_t=\bigcap_{t'<t} E_{t'},
\end{align*}
respectively the smallest solution and largest solution of
\begin{align*}
\min_E P_\Phi(E,\O) + \int_E \partial_t \Psi(x,t)dx.
\end{align*}
One has to be careful because the sets $E_{t'}$ are defined up to negligible sets. As a consequence, the non-countable union and intersection may not be well-defined. To remedy this problem one could have taken as a representative for $E_{t'}$ the set of points of density 1 which is also an open set. This can be shown thanks to the density lemma for the anisotropic perimeter (see \cite{Caffarelli,ChamThou}). We will come back to this later (see Lemma \ref{chapTV:density} and the remark that follows).\\

If we set 
\begin{align*}
v(x):=\sup\{t\in\R\ /\ x\in E_t\}.
\end{align*}
It is easily seen that $\{v>t\}=E^-_t$ and that $\{v\geq t\}=E^+_t$. 
\newline\newline
Proceeding as for the total variation (see \cite{ChamMCM}), we get\\
\begin{lem}
Let 
$\Phi$ be a reversible Finsler integrand and $\Psi$ as in $(H9)$. Then
$v$ is the minimizer of $\E$.
\end{lem}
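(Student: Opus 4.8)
The plan is to reduce the minimization of $\E$ to the family \eqref{chapTV:LevelSetPb_f} of minimal surface problems by a coarea/Cavalieri decomposition, exactly as is done for the total variation in \cite{ChamMCM}. For $u\in\BV$ with $\E(u)<\infty$, I would first combine the anisotropic coarea formula (Proposition \ref{chapTV:CoareaAnisotropic}) with the reversibility of $\Phi$ (Remark \ref{chapTV:per_compl}) to split $\int_\O\Phi(x,Du)=\int_0^{\infty}P_\Phi(\{u>t\},\O)\,dt+\int_{-\infty}^0 P_\Phi(\O\setminus\{u>t\},\O)\,dt$, and, writing $\Psi(x,u(x))-\Psi(x,0)=\int_0^{u(x)}\partial_t\Psi(x,t)\,dt$, splitting according to the sign of $u(x)$ and applying Fubini, to write $\int_\O\Psi(x,u)\,dx=\int_\O\Psi(x,0)\,dx+\int_0^\infty\!\big(\int_{\{u>t\}}\partial_t\Psi(x,t)\,dx\big)dt-\int_{-\infty}^0\!\big(\int_{\O\setminus\{u>t\}}\partial_t\Psi(x,t)\,dx\big)dt$. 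Adding, one obtains for every $u\in\BV$
\[
\E(u)=\int_\O\Psi(x,0)\,dx+\int_{-\infty}^{+\infty}G_t\big(\{u>t\}\big)\,dt,\qquad
G_t(E):=\begin{cases}P_\Phi(E,\O)+\displaystyle\int_E\partial_t\Psi(x,t)\,dx,& t\ge0,\\[3mm] P_\Phi(\O\setminus E,\O)-\displaystyle\int_{\O\setminus E}\partial_t\Psi(x,t)\,dx,& t<0.\end{cases}
\]

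The key observation is that for each $t\in\R$ the functional $G_t$ is minimized, over all sets of finite perimeter in $\O$, exactly by the solutions of \eqref{chapTV:LevelSetPb_f}. For $t\ge0$ this is immediate, since $G_t$ is the functional in \eqref{chapTV:LevelSetPb_f}. For $t<0$, substituting $E\mapsto\O\setminus E$ and using $P_\Phi(\O\setminus E,\O)=P_\Phi(E,\O)$, one checks that $G_t(E)$ equals the functional of \eqref{chapTV:LevelSetPb_f} evaluated at $E$ minus the $E$-independent (and finite, by $(H9)$) constant $\int_\O\partial_t\Psi(x,t)\,dx$. Moreover $G_t(\emptyset)=0$ for $t\ge0$ and $G_t(\O)=0$ for $t<0$, so $\min_E G_t(E)\le0$ for every $t$. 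Since, by construction, $\{v>t\}=E^-_t$ is a solution of \eqref{chapTV:LevelSetPb_f} for every $t$, we get $G_t(\{v>t\})=\min_E G_t(E)$ for every $t$.

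To conclude, let $u_*$ be the unique minimizer of $\E$ given by the direct method. Consider the truncations $v_M:=\min\big(M,\max(-M,v)\big)$; their superlevel sets are $\O$ for $t<-M$, $E^-_t$ for $-M\le t<M$, and $\emptyset$ for $t\ge M$, each of finite perimeter with $P_\Phi(E^-_t,\O)$ bounded uniformly on $[-M,M]$ (compare $E^-_t$ with $\emptyset$, resp. $\O$, in \eqref{chapTV:LevelSetPb_f}), so $v_M\in L^\infty(\O)\cap\BV$ and the decomposition applies to it, giving $\E(v_M)=\int_\O\Psi(x,0)\,dx+\int_{-M}^M\min_E G_t(E)\,dt$ (the contributions of $t\notin[-M,M)$ vanish since $G_t(\O)=0$ for $t<0$ and $G_t(\emptyset)=0$ for $t\ge0$). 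By minimality of $u_*$, $\int_\R G_t(\{u_*>t\})\,dt\le\int_{-M}^M\min_E G_t(E)\,dt$ for every $M$; letting $M\to\infty$ (the integrand $\min_E G_t(E)\le0$, so this is monotone, and the left-hand side is finite) yields $\int_\R G_t(\{u_*>t\})\,dt\le\int_\R\min_E G_t(E)\,dt$, and since $G_t(\{u_*>t\})\ge\min_E G_t(E)$ pointwise this forces $G_t(\{u_*>t\})=\min_E G_t(E)$ for a.e. $t$. Hence $\{u_*>t\}$ solves \eqref{chapTV:LevelSetPb_f} for a.e. $t$, so $E^-_t\subseteq\{u_*>t\}\subseteq E^+_t$, i.e. $\{v>t\}\subseteq\{u_*>t\}\subseteq\{v\ge t\}$ for a.e. $t$; a standard layer-cake argument then forces $v=u_*$ a.e. In particular $v\in\BV$ and $v$ is the minimizer of $\E$.

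The main obstacle is the measure-theoretic bookkeeping behind the decomposition: justifying the Fubini interchange — which requires $\Psi(\cdot,u)\in L^1(\O)$ whenever $\E(u)<\infty$, a fact that follows from $(H9)$ and the convexity inequality of Remark \ref{chapTV:rmq_cvx}$(iii)$ — together with correctly identifying the finite form of $G_t$ for $t<0$ via the reversibility of $\Phi$, and making sure the passage $M\to\infty$ is clean (this uses $\min_E G_t(E)\le0$ and $\E(u_*)<\infty$). Everything else is a routine transcription of the total-variation argument of \cite{ChamMCM}.
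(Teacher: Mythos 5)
Your proposal follows the same core strategy as the paper's proof --- decompose $\E$ by level sets via the anisotropic coarea formula, use the reversibility of $\Phi$ to handle negative levels, exploit the minimality of $E_t^-$ at each level $t$, and integrate in $t$ with a truncation at $\pm M$ --- but the endgame is organized differently. The paper proves directly that $\E(v)\leq\E(v')$ for every competitor $v'$ with $\Psi(\cdot,v')\in L^1(\O)$ (after a separate Step~1 establishing $\Psi(\cdot,v)\in L^1(\O)$ by Fatou), whereas you compare the already-known unique minimizer $u_*$ against the bounded truncations $v_M$ of $v$, deduce that $G_t(\{u_*>t\})=\min_E G_t(E)$ for a.e.\ $t$, and then identify $u_*=v$ through the sandwich $E_t^-\subset\{u_*>t\}\subset E_t^+$ and a layer-cake argument. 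This is a legitimate and arguably cleaner logical route: it dispenses with the paper's Step~1 (the truncations $v_M$ are bounded, so all integrability issues on the competitor side disappear), at the price of leaning on the prior existence and uniqueness of $u_*$ and on the extremality of $E_t^\pm$ among solutions of (\ref{chapTV:LevelSetPb_f}).

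The one step that is not fully justified as written is the application of the full-line Cavalieri decomposition to the possibly unbounded minimizer $u_*$. You claim the Fubini interchange is secured by $\Psi(\cdot,u_*)\in L^1(\O)$, but that condition controls only the \emph{oriented} integral $\int_0^{u_*(x)}\partial_t\Psi(x,t)\,dt$; absolute convergence of the double integral requires $\int_\O\int_0^{u_*(x)^+}|\partial_t\Psi(x,t)|\,dt\,dx<\infty$, and when $\partial_t\Psi(x,\cdot)$ changes sign this quantity is controlled only by $\Psi(x,u_*(x))+\Psi(x,0)-2\inf_t\Psi(x,t)$, and $(H9)$ does not guarantee that $x\mapsto\inf_t\Psi(x,t)$ is integrable. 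The paper avoids this by never leaving the bounded strip $t\in[-M,M]$, where $|\partial_t\Psi(\cdot,t)|\leq\max(|\partial_t\Psi(\cdot,-M)|,|\partial_t\Psi(\cdot,M)|)\in L^1(\O)$ uniformly in $t$, and by controlling the resulting remainder $\mathcal{R}(\cdot,M)$ by dominated convergence. Your argument can be repaired the same way (apply the decomposition to the truncations of $u_*$ and pass to the limit, or split $\partial_t\Psi$ into its positive and negative parts and use monotone convergence on each), but as stated this interchange is a gap.
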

\begin{proof}
\textit{Step 1.} We claim that $\Psi(\cdot,v)\in L^1(\O)$. Let us prove it. Since $E^-_t$ solves (\ref{chapTV:LevelSetPb_f}), we have in particular
\begin{align}\label{chapTV:per_neg}
P_\Phi(E_t^-,\O)+\int_{E_t^-} \partial_t\Psi(x,t)dx\leq 0.
\end{align}
Integrating with respect to $t$ it follows
\begin{align*}
\int_0^M \int_{E_t^-} \partial_t{\Psi}(x,t)dxdt\leq 0
\end{align*}
where by Fubini's theorem the integral to the left can be rewritten
\begin{align*}
\int_0^M \int_{E_t^-} \partial_t{\Psi}(x,t)dxdt&=\int_{E_0^-}\int_0^{\min(v(x),M)}\partial_t{\Psi}(x,t)dtdx\\
&=\int_{E_0^-}\big({\Psi}(x,\min(v(x),M))-{\Psi}(x,0)\big)dx
\end{align*}
which implies
\begin{align*}
\int_{\{v>0\}}{\Psi}(x,\min(v(x),M))dx\leq\int_{\O}{\Psi}(x,0)<+\infty.
\end{align*}
Now, by Fatou's lemma, that can by applied by Remark \ref{chapTV:rmq_cvx},
\begin{align*}
\int_{\{v>0\}}{\Psi}(x,v(x))dx<+\infty.
\end{align*}
Observe that by Remark \ref{chapTV:per_compl} (this is where the reversibility of $\Phi$ comes into play), $\{-v>t\}=\{v<-t\}=\O\setminus E_{-t}^+$ solves
\begin{align*}
\min_E P_\Phi(E,\O)-\int_{E} \partial_t\Psi(x,-t)dx.
\end{align*}
Thus replacing $\Psi(\cdot,t)$ by $\Psi(\cdot,-t)$ in (\ref{chapTV:LevelSetPb_f}), function $v$ is replaced by $-v$.
This proves
\begin{align*}
\int_{\{v<0\}}{\Psi}(x,v(x))dx<+\infty
\end{align*}
hence our claim.\\\\
\textit{Step 2.}Minimization property.\\
Let $v'\in\BV$ such that $\Psi(\cdot,v')\in L^1(\O)$ a candidate for the minimization and denote $E'_t=\{v'>t\}$ for some $t\in\R$. By minimality of $E^-_t$,
\begin{align*}
P_\Phi(E_t^-,\O) + \int_{E_t^-} \partial_t \Psi(x,t)dx\leq P_\Phi(E_t',\O) + \int_{E_t'} \partial_t \Psi(x,t)dx. 
\end{align*}
Integrating with respect to $t$
\begin{align}\label{chapTV:int_per}
\int_{-M}^M \left(P_\Phi(E_t^-,\O) + \int_{E_t^-} \partial_t \Psi(x,t)dx\right) dt\leq \int_{-M}^M \left(P_\Phi(E_t',\O) + \int_{E_t'} \partial_t \Psi(x,t)dx\right)dt. 
\end{align}
Note that by Fubini's theorem
\begin{align*}
\int_{-M}^M\int_{E_t^-} \partial_t \Psi(x,t)dx dt
&=\int_{\O}\int_{-M}^M\chi_{\{v>t\}}\partial_t \Psi(x,t)dtdx\\
&=\int_{\O}\int_{\min(v(x),-M)}^{\min(v(x),M)}\partial_t{\Psi}(x,t)dtdx\\
&=\int_{\O}\Psi(x,\min(v(x),M))-\Psi(x,\min(v(x),-M))dx\\
&=\int_{\O}\Psi(x,v(x))dx-\int_{\O}\Psi(x,-M)dx+\mathcal{R}(v,M)
\end{align*}
where we set 
\begin{align*}
\mathcal{R}(v,M)=\int_{\O}\Psi(x,\min(v(x),M))-\Psi(x,v(x))dx\\
+\int_{\O}\Psi(x,-M)-\Psi(x,\min(v(x),-M))dx.
\end{align*}
For the function $v'$ one obtains a similar identity, namely
\begin{align*}
\int_{-M}^M\int_{E_t'} \partial_t \Psi(x,t)dx dt
=\int_{\O}\Psi(x,v'(x))dx-\int_{\O}\Psi(x,-M)dx+\mathcal{R}(v',M).
\end{align*}
Though, for any function $v$ such that $\Psi(\cdot,v)\in L^1$
\begin{align*}
\lim_{M\to+\infty}\mathcal{R}(v,M)=0.
\end{align*}
Indeed, observe that on $\{v>M\}$
\begin{align*}
|\Psi(\cdot,\min(v,M))-\Psi(\cdot,v)|=\Psi(\cdot,v)-\Psi(\cdot,M)\leq\Psi(\cdot,v),
\end{align*}
and on $\{v<-M\}$
\begin{align*}
|\Psi(\cdot,-M)-\Psi(\cdot,\min(v,-M))|=\Psi(\cdot,v)-\Psi(\cdot,-M)\leq\Psi(\cdot,v),
\end{align*}
which proves the claim by application of the dominated convergence theorem.\\

In the end, making $M\to+\infty$ in (\ref{chapTV:int_per}) and using the anisotropic coarea formula we get
\begin{align*}
\int_\O \Phi(x,Dv)+\int_\O \Psi(x,v(x))dx\leq\int_\O \Phi(x,Dv')+\int_\O \Psi(x,v'(x))dx
\end{align*}
which means that $v$ minimizes $\E$ that is $v=u$ since the minimizer is unique.
\end{proof}
As a consequence, one actually proved\\
\begin{prop}\label{chapTV:LevelSetPb_f_thm}
Let 
$\Phi$ be a reversible Finsler integrand, $\Psi$ satisfy $(H9)$ and 
$u$ be the minimizer of $\mathcal{E}$.
Then the superlevel $E_t:=\{u>t\},\ t\in\R$, solves the minimal surface problem 
\begin{align*}
\min_E P_\Phi(E,\O)+\int_E \partial_t\Psi(x,t)dx
\end{align*}
over all sets of finite perimeter in $\O$.
\end{prop}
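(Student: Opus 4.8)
The plan is to deduce Proposition~\ref{chapTV:LevelSetPb_f_thm} directly from the lemmas that precede it, essentially by unwinding the construction of $v$. The key observation is that the preceding lemma has already shown that the particular function $v(x)=\sup\{t\in\R\ /\ x\in E_t\}$, built out of \emph{some} choice of minimizers $E_t$ of the level-set problem~(\ref{chapTV:LevelSetPb_f}), is \emph{the} minimizer $u$ of $\E$. Since the minimizer of $\E$ is unique (by strict convexity of $\Psi$), we get $u=v$, and hence $\{u>t\}=\{v>t\}=E_t^-$ for every $t\in\R$. By Lemma~\ref{chapTV:NivCroissants} and the definition $E_t^-=\bigcup_{t'>t}E_{t'}$, the set $E_t^-$ is the smallest solution of~(\ref{chapTV:LevelSetPb_f}), as was already recorded just after that lemma; in particular $\{u>t\}$ \emph{is} a solution of the minimal surface problem. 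This handles the statement as written.

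First I would make precise the passage from ``$v$ is built from a choice of $E_t$'' to ``$\{u>t\}$ solves~(\ref{chapTV:LevelSetPb_f}) for \emph{every} $t$''. The subtlety is that in the construction one chose an a.e.-defined family $(E_{t'})_{t'\in\R}$ and set $E_t^-=\bigcup_{t'>t}E_{t'}$; one must check $E_t^-$ itself minimizes~(\ref{chapTV:LevelSetPb_f}) for the given $t$. This is the standard argument: take $t'_n\downarrow t$ with each $E_{t'_n}$ a minimizer; then $E_{t'_n}\uparrow E_t^-$ (monotonicity from Lemma~\ref{chapTV:NivCroissants}), so $\chi_{E_{t'_n}}\to\chi_{E_t^-}$ in $L^1$, whence $P_\Phi(E_t^-,\O)\leq\liminf P_\Phi(E_{t'_n},\O)$ by lower semicontinuity of $J_\Phi$, while $\partial_t\Psi(x,t'_n)\to\partial_t\Psi(x,t)$ pointwise (left-continuity of the derivative of a convex function — or one uses $\partial_t^-$ and a one-sided limit), so the energies converge along any competitor by dominated convergence (using $\partial_t^-\Psi(\cdot,t)\in L^1$, assumption~(\ref{chapTV:hyp_psi})). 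Comparing with an arbitrary competitor $F$ through $E_{t'_n}$ and passing to the limit gives minimality of $E_t^-$ for the parameter $t$.

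Then I would connect $\{u>t\}$ to $E_t^-$ rigorously. From $u=v$ (uniqueness of the minimizer, established in the preceding lemma) and the elementary identity $\{v>t\}=E_t^-$ noted in the text, we get $\{u>t\}=E_t^-$ up to a negligible set, for every $t\in\R$. Combining with the previous paragraph, $\{u>t\}$ solves~(\ref{chapTV:LevelSetPb_f}). Since the level-set problem is invariant under modification of a set by a Lebesgue-negligible set (both $P_\Phi$ and the volume integral are), the statement holds for the canonical representative as well, which disposes of the ``defined up to negligible sets'' caveat.

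The main obstacle is really bookkeeping rather than a deep difficulty: one must be careful that the uncountable union defining $E_t^-$ is well-defined and that one may choose a \emph{measurable-in-$t$} family of minimizers so that the Fubini arguments invoked in the preceding lemma (and reused implicitly here) are legitimate — the text itself flags this and points to the density lemma for the anisotropic perimeter and the open/density-$1$ representative. In practice I would simply invoke that representative and Lemma~\ref{chapTV:NivCroissants}, so that $t\mapsto E_t^-$ is monotone and the union/intersection are genuine monotone limits; then the lower-semicontinuity-plus-dominated-convergence closure argument above is routine. The reversibility of $\Phi$ is not needed for this particular statement (only the existence and uniqueness results and the coarea formula are), but it was used upstream to show $v=u$, so it enters indirectly through that lemma.
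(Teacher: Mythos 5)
Your proposal follows the paper's own route exactly: in the text, Proposition \ref{chapTV:LevelSetPb_f_thm} is derived with no separate proof, as an immediate consequence of the preceding lemma ($v$ minimizes $\E$, hence $u=v$ by uniqueness of the minimizer, hence $\{u>t\}=E_t^-$, which by construction solves the level-set problem), and your identification of the ingredients (uniqueness, Lemma \ref{chapTV:NivCroissants}, lower semicontinuity of $P_\Phi$, invariance of the problem under negligible modifications) matches that. The one place where the detail you chose to fill in goes astray is the pointwise limit $\partial_t\Psi(\cdot,t'_n)\to\partial_t\Psi(\cdot,t)$ for $t'_n\downarrow t$: by the paper's convention (Remark \ref{chapTV:rmq_cvx}$(i)$) $\partial_t\Psi$ is the \emph{left} derivative, and left-continuity helps only along sequences increasing to $t$; along $t'_n\downarrow t$ one has $\partial_t^-\Psi(x,t'_n)\to\partial_t^+\Psi(x,t)$, the \emph{right} derivative. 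Your closure argument therefore proves that $E_t^-$ minimizes $P_\Phi(E,\O)+\int_E\partial_t^+\Psi(x,t)\,dx$, which coincides with the stated problem only where $\Psi(x,\cdot)$ is differentiable at $t$ (automatic for the quadratic fidelity, and true for a.e.\ $t$ in general, but not for literally every $t$). This is really a gap in the statement's bookkeeping rather than in your strategy --- the paper itself merely asserts, citing \cite{ChamTV}, that $E_t^-$ is the smallest solution at level $t$ without addressing the one-sided-derivative issue --- but to make your filled-in step airtight you should either restrict to the full-measure set of $t$ for which $\partial_t^-\Psi(\cdot,t)=\partial_t^+\Psi(\cdot,t)$ a.e.\ in $x$, or phrase the level-set problem for $\{u>t\}$ with the right derivative (reserving the left derivative for $\{u\ge t\}$).
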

\begin{rmq}
The case $\Psi(x,t)=F(t-g(x))$ with $F$ of class $C^1$ and merely convex is discussed in \cite{Thouroude}. The proof is based on an approximation argument. We expect the argument to work for a general integrand $\Psi(x,t)$ convex in $t$. Though, for our future analysis, such a refinement is not necessary.
\end{rmq}

\section{The discontinuity set}\label{chapTV:sec_jump}
In this section, we are interested in proving qualitative results on the behavior of the jump set of the minimizer of (\ref{chapTV:ROF_f}). For this purpose, we first need to deal with the regularity of the level sets of the minimizer.

\subsection{Regularity theory for elliptic PDEs}
Let us recall a classical result that is taken from Gilbarg and Trudinger's book \cite{Gilbarg} (see in particular Theorem 8.9 and Theorem 9.15).
First, let us consider an operator in non divergence form
$$L=\sum_{i,j}a_{i,j}\partial_{x_i x_j}+\sum_i b_i\partial_{x_i}+c,$$
that satisfies the ellipticity condition
$$\sum_{i,j}a_{i,j}\xi_i\xi_j\geq C{|\xi|}^2.$$
We will say that $L$ is \textit{strictly elliptic}.\\

For such an operator, one has a general existence and regularity result for the Dirichlet problem:\\
\begin{thm}[{\cite[Theorem 9.15]{Gilbarg}}]\label{chapTV:thm_reg_edp}
\ Let $\O$ be a $C^{1,1}$ open domain in $\R^N$, and let the operator $L$ be strictly elliptic in $\O$ with coefficients $a_{i,j}\in C^0(\bar{\O})$, $b_i,\ c\in L^\infty(\O)$, with $i$, $j=1,\cdots, n$ and $c\leq 0$. Then if $f\in L^p(\O)$ and $\vphi\in W^{2,p}(\O)$ with $1<p<\infty$, the Dirichlet problem $Lu=f$ in $\O$, $u-\vphi\in W^{1,p}_0$ has a unique solution $u\in W^{2,p}(\O)$.\\
\end{thm}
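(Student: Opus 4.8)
This statement is classical and one may simply invoke \cite[Theorem 9.15]{Gilbarg}; for completeness let me outline the argument. The plan is to combine a global a priori $W^{2,p}$ estimate, uniqueness via the maximum principle, and the method of continuity, thereby reducing solvability for $L$ to solvability for the Laplacian. First I would pass to homogeneous boundary data: replacing $u$ by $u-\vphi$ and $f$ by $f-L\vphi\in L^p(\O)$ (the membership holds since $a_{i,j},b_i,c\in L^\infty(\O)$ and $\vphi\in W^{2,p}(\O)$), it suffices to prove that $L\colon W^{2,p}(\O)\cap W^{1,p}_0(\O)\to L^p(\O)$ is a bijection.

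Next I would establish the a priori estimate. The core is the global Calder\'on--Zygmund estimate
\begin{align*}
\|u\|_{W^{2,p}(\O)}\le C\left(\|Lu\|_{L^p(\O)}+\|u\|_{L^p(\O)}\right),\qquad u\in W^{2,p}(\O)\cap W^{1,p}_0(\O),
\end{align*}
with $C=C(N,p,\O,L)$, which is \cite[Theorem 9.13]{Gilbarg}. I would prove it by freezing the leading coefficients, using the $L^p$-boundedness of the second order Riesz transforms for the resulting constant coefficient operator, a partition of unity and perturbation argument to restore the continuous coefficients $a_{i,j}$, and, near $\partial\O$, a flattening of the boundary which is legitimate precisely because $\O$ is $C^{1,1}$. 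Then the lower order term is absorbed: Aleksandrov's maximum principle \cite[Theorem 9.1]{Gilbarg} --- this is where the sign condition $c\le 0$ is used --- gives $\sup_\O|u|\le C\|Lu\|_{L^N(\O)}$ for $u$ vanishing on $\partial\O$, and combining this with the displayed estimate, the compactness of the embedding $W^{2,p}(\O)\subset W^{1,p}(\O)$, and a standard contradiction argument, one obtains
\begin{align*}
\|u\|_{W^{2,p}(\O)}\le C\|Lu\|_{L^p(\O)}.
\end{align*}

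Finally I would invoke the method of continuity. Uniqueness (injectivity of $L$) follows at once from the last bound, or directly from the maximum principle applied to $\pm u$. For surjectivity, set $L_t=(1-t)\Delta+tL$ for $t\in[0,1]$; each $L_t$ is strictly elliptic with the same structural constants and satisfies $c_t=tc\le 0$, so the estimate above holds for every $L_t$ with a constant independent of $t$. Since $L_0=\Delta$ maps $W^{2,p}(\O)\cap W^{1,p}_0(\O)$ onto $L^p(\O)$ --- solvability of Poisson's equation, which for $p=2$ is Lax--Milgram together with $W^{2,2}$ regularity and for general $p$ follows from the estimate above and approximation --- the method of continuity \cite[Theorem 5.2]{Gilbarg} forces $L_1=L$ to be onto as well. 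Hence $L$ is the asserted bijection, and undoing the first reduction yields the unique solution $u=(u-\vphi)+\vphi\in W^{2,p}(\O)$.

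\textbf{Main obstacle.} All the real work sits in the global $W^{2,p}$ estimate: it is there that singular integral theory and the $C^{1,1}$ regularity of $\partial\O$ are indispensable, and it is the one step genuinely requiring techniques beyond Hilbert space methods when $p\ne 2$. The other non-elementary input is Aleksandrov's maximum principle, which is also the only place the condition $c\le 0$ enters.
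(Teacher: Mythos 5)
The paper gives no proof of this statement: it is recalled verbatim from Gilbarg--Trudinger and justified only by the citation \cite[Theorem 9.15]{Gilbarg}, which is exactly what you do before sketching, accurately, the standard textbook argument (global Calder\'on--Zygmund estimate, Aleksandrov maximum principle to exploit $c\leq 0$ and remove the $\|u\|_{L^p}$ term, method of continuity from the Laplacian). Your proposal is correct and takes essentially the same approach as the paper.
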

Here me must mention the names of Ennio De Giorgi, John Nash and Jürgen Kurt Moser whose pioneering works contributed to the theory of regularity of elliptic PDEs. We refer to \cite{Mingione} for further PDE related regularity results and historical facts.\\

\begin{rmq}\label{chapTV:Morrey}
Let us also recall that for Sobolev spaces we have the following embedding in Hölder spaces
\begin{align*}
W^{k,p}(\O)\subset C^{r,\alpha}(\O)
\end{align*}
when $k-r-\alpha=\frac{N}{p}$ and $\alpha\in(0,1)$.\\
\end{rmq}
This is a consequence of Morrey's inequality (see \cite{Evans10}).
Consequently, if $p>N$ in the previous theorem, the solution inherits more regularity, namely $u\in C^{1,\alpha}$ with $\alpha=1-\frac{p}{N}$.

\subsection{Regularity issues for minimal surfaces}

The classical regularity theory for minimal surfaces (see \cite{Almgren77,Schoen,Bombieri}) and the recent paper \cite[Theorem 6.1]{Duzaar}, which discusses the regularity of rectifiable currents that are almost minimizers of an elliptic integrand, yield
\newline
\begin{thm}\label{chapTV:Regularity_f} Let $\Phi$ be any strongly convex Finsler integrand and $\Psi$ be such that assumption $(H9)$ is satisfied. We also assume that for some real $t$, ${\partial_t\Psi(\cdot,t)}\in L^p(\O)$ with $p>N$. Then a set $E_t$ that solves (\ref{chapTV:LevelSetPb_f}) has a reduced boundary $\partial^* E_t$ of Hölder class $C^{1,\alpha}$ for any $\alpha<\frac{1}{2}\left(1-\frac{N}{p}\right)$. \\
Moreover, $\partial E\setminus\partial^* E$ is a closed set and
$$\H^s(\partial E_t\setminus\partial^* E_t)=0$$
for every $s>N-3$.
\end{thm}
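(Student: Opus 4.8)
The plan is to reduce the statement to the classical $\Phi$-minimizing surface regularity theory (of Almgren, Schoen–Simon, Bombieri, and the almost-minimizer version in Duzaar–Steffen) by showing that a solution $E_t$ of (\ref{chapTV:LevelSetPb_f}) is an \emph{almost minimizer} of the anisotropic perimeter $P_\Phi$, with a Hölder-type gauge. First I would fix $t$ with $f:=\partial_t\Psi(\cdot,t)\in L^p(\O)$, $p>N$, and recall that $E_t$ minimizes $F\mapsto P_\Phi(F,\O)+\int_F f\,dx$. Given any competitor $F$ with $F\triangle E_t\subset\subset B_r(x_0)\subset\O$, minimality gives
\begin{align*}
P_\Phi(E_t,B_r(x_0))\leq P_\Phi(F,B_r(x_0))+\int_{F\triangle E_t}|f|\,dx\leq P_\Phi(F,B_r(x_0))+\|f\|_{L^p(B_r(x_0))}\,|B_r|^{1-1/p},
\end{align*}
so that $P_\Phi(E_t,B_r(x_0))\leq P_\Phi(F,B_r(x_0))+C\,r^{N(1-1/p)}=P_\Phi(F,B_r(x_0))+C\,r^{N-1+\alpha_0}$ with $\alpha_0=1-N/p>0$. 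This is exactly the defining inequality of an $(\mathbf{M},C,r^{N-1+\alpha_0})$-almost minimizer (equivalently an almost minimizer with gauge $\omega(r)=C r^{\alpha_0}$) for the elliptic integrand $\Phi$, using that $\Phi$ is a strongly convex Finsler integrand so $(H8)$ ($\Phi$ elliptic in Almgren's sense) and $(H6)$–$(H7)$ (the required Hölder/Lipschitz regularity in $x$ and in $p$) hold.

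Next I would quote the regularity theorem for almost minimizers of elliptic parametric integrands: there is a relatively closed \emph{singular set} $\Sigma\subset\partial E_t\cap\O$ such that $\partial^*E_t\cap\O$ is, away from $\Sigma$, a $C^{1,\alpha}$ hypersurface, with the Hölder exponent governed by half the gauge exponent, i.e.\ any $\alpha<\tfrac12\alpha_0=\tfrac12(1-N/p)$; this is the content of \cite[Theorem 6.1]{Duzaar} together with the classical interior regularity theory \cite{Almgren77,Schoen,Bombieri}. The passage from "$P_\Phi$-almost minimizing" to "$C^{1,\alpha}$ away from a small singular set" is where one invokes the De Giorgi-type density/excess decay machinery adapted to the anisotropic setting; the loss of a factor $\tfrac12$ in the Hölder exponent is the standard price (the excess decays like $r^{\alpha_0}$, and one-half of that exponent survives the iteration to the $C^{1,\alpha}$ estimate for the graph). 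For the dimension bound on the singular set, I would cite the dimension-reduction / Federer argument for elliptic integrands, which gives $\mathcal H^s(\Sigma)=0$ for all $s>N-3$ (the minimal-surface bound $N-8$ is not available for general elliptic $\Phi$, and $N-3$ is what is known, e.g.\ via Schoen–Simon–Almgren); since $\partial E_t\setminus\partial^*E_t\subset\Sigma$ is closed, both assertions follow. Finally, I should note that if $E_t$ is not a priori taken to have the open/density-$1$ representative, one first replaces it by that representative (as discussed after Lemma \ref{chapTV:NivCroissants}, via the anisotropic density lemma), so that $\partial E_t$ is the topological boundary and the statements about $\partial E_t$ and $\partial^*E_t$ make sense.

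The main obstacle is not the almost-minimality estimate, which is a short computation as above, but correctly invoking the anisotropic regularity theory: one must check that the hypotheses $(H6)$–$(H8)$ on $\Phi$ are precisely those required by \cite{Duzaar} and the classical references (ellipticity in the sense of Almgren, plus enough smoothness in both variables), and that the gauge $\omega(r)=Cr^{\alpha_0}$ is an admissible Dini/Hölder gauge for that theory so that the resulting graphs are $C^{1,\alpha}$ rather than merely $C^1$. The bookkeeping of the exponent — tracing $p>N\rightsquigarrow\alpha_0=1-N/p\rightsquigarrow\alpha<\tfrac12\alpha_0$ — and the citation for the $\mathcal H^s$, $s>N-3$, bound on the singular set of an elliptic almost minimizer are the points that need care; everything else is a direct application of cited results.
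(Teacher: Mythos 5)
Your proposal is correct and follows exactly the route the paper takes: the paper states this theorem as a direct consequence of the almost-minimizer regularity theory of \cite{Almgren77,Schoen,Bombieri} and \cite[Theorem 6.1]{Duzaar}, and the H\"older-inequality computation you give to verify almost-minimality with gauge $Cr^{N-1+(1-N/p)}$ is precisely the one the paper carries out explicitly later for the weighted case (just before Theorem \ref{chapTV:quasi_min_fort}), with the same exponent bookkeeping $\alpha<\tfrac12(1-N/p)$ and the same $N-3$ singular-set bound. No gaps.
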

The hypothesis ${\partial_t\Psi(\cdot,t)}\in L^p(\O)$ with $p>N$ is essential. Indeed, in \cite{Barozzi}, the authors even prove that any set of finite perimeter $E\subset\R^N$ solves the prescribed mean curvature problem
\begin{align*}
\min_E P(E,\R^N)+\int_E f
\end{align*}
for some appropriate $f\in L^1(\R^N)$.\\

Morgan proved in \cite{Morgan91} that the value $N-3$ is sharp by exhibiting an example of a singular $\Phi$-minimizing hypersurface in $\R^4$.\\

When the anisotropy takes the form $\Phi(x,p)=w(x)|p|$ for some Hölder continuous weight $w$, it is possible to refine these regularity results and get $N-8$ instead of $N-3$ without even using the theory of currents. We shall discuss this case with many more details and references in Section \ref{chapTV:weighted_case}.\\

If one assumes in addition that $\partial_t\Psi(\cdot,t)\in L^\infty$  we can gain a little more regularity:
\begin{thm}\label{chapTV:Regularity+_f}
Let $\Phi$ be a strongly convex Finsler integrand that is Lipschitz continuous in $p$ uniformly $x$ 
and consider a function $\Psi$ that satisfies $(H9)$. Suppose that for some $t\in\R$, $\partial_t\Psi(\cdot,t)\in L^\infty$ and pick $E_t$ 
 that solves (\ref{chapTV:LevelSetPb_f}). Then $\partial^* E_t$ is $W^{2,p}$ for all $1<p<\infty$ and thus $C^{1,\alpha}$ for any $\alpha<1$.\\ 
In addition, $\partial E\setminus\partial^* E$ is closed and for every $s>N-3$
$$\H^s(\partial E\setminus\partial^* E)=0.$$
\end{thm}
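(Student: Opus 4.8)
The plan is to upgrade the $C^{1,\alpha}$ regularity of Theorem~\ref{chapTV:Regularity_f} to $W^{2,p}$ regularity by writing down the Euler--Lagrange equation satisfied by $\partial^* E_t$ and invoking the elliptic regularity result Theorem~\ref{chapTV:thm_reg_edp}. First I would fix a point $x_0\in\partial^* E_t$; by Theorem~\ref{chapTV:Regularity_f} the reduced boundary is already a $C^{1,\alpha}$ hypersurface near $x_0$, so after a rotation I can write it locally as a graph $x_N=\gamma(x')$ with $\gamma\in C^{1,\alpha}$ on a small ball $B'\subset\R^{N-1}$. The minimality of $E_t$ for the functional $P_\Phi(E,\O)+\int_E\partial_t\Psi(x,t)\,dx$ then translates, via the first variation, into a PDE for $\gamma$: the anisotropic mean curvature of the graph equals (up to sign) the prescribed datum, i.e. $\diverg_{x'}\!\big(D_p\Phi(x',\gamma(x'),(-\nabla\gamma,1))\big)=\pm\partial_t\Psi(x',\gamma(x'),t)$ in the weak sense on $B'$.

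Next I would expand this divergence-form equation, using assumption $(H6)$ (Lipschitz dependence of $\Phi$ and $D_p\Phi$ in $x$) and the hypothesis that $\Phi$ is Lipschitz in $p$ uniformly in $x$, to put it in the non-divergence form $\sum_{i,j}a_{i,j}(x')\partial_{x_ix_j}\gamma + (\text{lower order}) = f$ where $a_{i,j}(x')=D^2_p\Phi\big(x',\gamma(x'),(-\nabla\gamma(x'),1)\big)$ restricted appropriately to the tangential directions, the lower-order terms collect the $x$-derivatives of $D_p\Phi$, and $f=\pm\partial_t\Psi(\cdot,t)$. The ellipticity hypothesis $(H8)$ guarantees that the matrix $(a_{i,j})$ is uniformly positive definite on $\S^{N-1}$, hence on the compact set of gradient values $(-\nabla\gamma,1)$ since $\gamma\in C^{1}$; moreover $a_{i,j}\in C^0$ because $\gamma\in C^{1,\alpha}$ and $D^2_p\Phi$ is continuous away from the origin. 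The lower-order coefficients lie in $L^\infty$ (here the Lipschitz hypotheses on $\Phi$, $D_p\Phi$ and the $C^1$ bound on $\gamma$ are used) and $f\in L^\infty(B')$ by assumption. With $c\equiv 0\le 0$, Theorem~\ref{chapTV:thm_reg_edp} applied on a slightly smaller $C^{1,1}$ subdomain (taking $\varphi=\gamma$ as boundary data, which is admissible since $\gamma\in C^{1,\alpha}\subset W^{2,p}$ is false, so instead I would take $\varphi$ a smooth mollification of $\gamma$ and compare, or better, apply the interior $W^{2,p}$ estimate version) yields $\gamma\in W^{2,p}_{\mathrm{loc}}(B')$ for every $1<p<\infty$. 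Morrey's embedding (Remark~\ref{chapTV:Morrey}) then upgrades this to $\gamma\in C^{1,\alpha}$ for every $\alpha<1$, establishing the first assertion. The statement about $\partial E\setminus\partial^* E$ being closed with $\H^s=0$ for $s>N-3$ is unchanged from Theorem~\ref{chapTV:Regularity_f}, since improving the regularity of the reduced boundary does not affect the dimension bound on the singular set, so nothing new is needed there.

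The main obstacle I expect is the bookkeeping in passing from the weak Euler--Lagrange (first variation) identity to a pointwise non-divergence-form elliptic equation with coefficients of the right regularity: one must be careful that $a_{i,j}$ is evaluated at gradient arguments bounded away from zero (true since the normal has nonvanishing $x_N$-component on the graph), that $D^2_p\Phi$ is only Hölder off the origin so the passage to non-divergence form is legitimate only after knowing $\gamma\in C^{1,\alpha}$ (which we do, from Theorem~\ref{chapTV:Regularity_f}), and that the Dirichlet-problem formulation of Theorem~\ref{chapTV:thm_reg_edp} has to be localized to an interior estimate — alternatively one quotes the interior $W^{2,p}$ estimate of \cite[Theorem 9.11]{Gilbarg}, which is the genuinely relevant statement. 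A secondary technical point is justifying that the first variation of $\int_E\partial_t\Psi(x,t)\,dx$ produces exactly the prescribed-curvature term with the stated $L^\infty$ datum; this is routine given $\partial_t\Psi(\cdot,t)\in L^\infty$ and the $C^{1,\alpha}$ graph representation.
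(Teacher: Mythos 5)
Your overall strategy --- write $\partial^* E_t$ locally as a $C^{1,\alpha}$ graph, derive the divergence-form Euler--Lagrange equation, recast it in non-divergence form, and invoke the $W^{2,p}$ theory of Theorem \ref{chapTV:thm_reg_edp} followed by Morrey's embedding --- is exactly the paper's. But there is a genuine gap at the step where you ``expand the divergence'': you assert that the passage to non-divergence form is legitimate ``after knowing $\gamma\in C^{1,\alpha}$'', and that is not true. To write $\diverg_{x'}\bigl(D_{p'}\Phi(x',\gamma,(-\nabla\gamma,1))\bigr)$ as $\sum_{i,j}a_{ij}\partial_{x_ix_j}\gamma+(\text{lower order})$ you must apply the chain rule to the composite $x'\mapsto D_{p'}\Phi(x',\gamma(x'),(-\nabla\gamma(x'),1))$, and this requires $\nabla\gamma$ itself to be weakly differentiable, i.e.\ $\gamma\in W^{2,1}_{loc}$ at least (and $\gamma\in W^{2,2}_{loc}$ for the resulting terms to live in the right spaces). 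A $C^{1,\alpha}$ function need not possess second derivatives in any sense that makes the non-divergence equation an identity between functions; and both the existence-and-uniqueness statement of Theorem \ref{chapTV:thm_reg_edp} and the interior estimate of \cite[Theorem 9.11]{Gilbarg} that you correctly identify as the relevant localized version apply only to functions already known to lie in $W^{2,p}_{loc}$, so neither can be used to conjure the second derivatives into existence.

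The paper fills precisely this hole with Lemma \ref{chapTV:lem_A}: a Nirenberg difference-quotient argument, exploiting the strict monotonicity of $p\mapsto D_p\Phi(x,t,p)$ coming from the ellipticity $(H8)$ together with the Lipschitz continuity of $D_p\Phi$, which shows that any $C^1$ weak solution of $-\diverg\left(A(\cdot,v,\nabla v)\right)=h$ with $h\in L^\infty$ belongs to $W^{2,2}_{loc}$. Only after that is the expansion of the divergence justified, and the remark following the paper's proof states explicitly that this is the entire reason the lemma is proved (the divergence-form regularity theory being unavailable since the coefficients are merely continuous). Your proposal needs this intermediate $W^{2,2}_{loc}$ step, or an equivalent device, inserted between the weak Euler--Lagrange equation and the non-divergence form; the rest of your outline --- uniform ellipticity of $(a_{ij})$ on the compact set of gradient values, $L^\infty$ lower-order terms via $(H6)$, Morrey's embedding, and carrying over the singular-set statement unchanged from Theorem \ref{chapTV:Regularity_f} --- matches the paper.
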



We actually improve Theorem \ref{chapTV:Regularity_f} since the degree of Hölder continuity of the boundary increases from $\alpha/2$ to $\alpha$. This result is stated in \cite[p.140]{Ambrosio97}  for the classical curvature problem (\ref{chapTV:LevelSetPbEq}). As we could not find any precise reference for this more general case, we provide a proof.\\

First, let us point out that $\partial^* E_t$ is $W^{2,2}_{loc}$ as a consequence of the following lemma:

\begin{lem}\label{chapTV:lem_A}
Let $v\in C^1(\O')$ with $\O'\subset\R^{M}$ open be a weak solution of
\begin{align}\label{chapTV:MC_w}
-\diverg \left(A(\cdot,v,\G v)\right)=h 
\end{align}
where $h\in L^\infty(\O')$ and $A:\O'\times\R\times\R^{M}\rightarrow\R^{M}$ is Lipschitz continuous and locally strictly monotone \emph{i.e.} for any compact set $K\subset\R^{M}$ there is a constant $c_K$ s.t.
\begin{align}\label{chapTV:A_ell}
\langle A(x,t,p)-A(x,t,\tilde{p}),p-\tilde{p}\rangle \geq c_K{|p-\tilde{p}|}^2,\ \forall p,\tilde{p}\in K
\end{align}
uniformly $x\in\O',t\in\R$. Then $v\in W^{2,2}_{loc}(\O')$.
\end{lem}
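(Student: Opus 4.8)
The plan is to use the classical difference-quotient (Nirenberg translation) method adapted to the quasilinear divergence-form equation \eqref{chapTV:MC_w}. Since the statement is local, I would fix a ball $B_{2\rho}\subset\subset\O'$, a cutoff function $\eta\in C_c^\infty(B_{2\rho})$ with $\eta\equiv 1$ on $B_\rho$ and $0\le\eta\le1$, and for a unit vector $e_k$ and small $|s|$ introduce the difference quotient $\tau_{s,k}f(x)=\frac{f(x+se_k)-f(x)}{s}$. The goal is to bound $\|\tau_{s,k}\G v\|_{L^2(B_\rho)}$ uniformly in $s$, which by the standard characterization of $W^{1,2}$ via difference quotients gives $\G v\in W^{1,2}_{loc}$, i.e. $v\in W^{2,2}_{loc}$.

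The key steps, in order: First, test the weak formulation of \eqref{chapTV:MC_w} with the admissible test function $\varphi=\tau_{-s,k}(\eta^2\,\tau_{s,k}v)$, which is legitimate since $v\in C^1$ and $\eta$ has compact support (for $|s|$ small the translates stay inside $\O'$). After the usual discrete integration-by-parts, this produces
\begin{align*}
\int_{B_{2\rho}}\eta^2\,\big\langle \tau_{s,k}\big(A(\cdot,v,\G v)\big),\,\tau_{s,k}\G v\big\rangle\,dx
= -\int_{B_{2\rho}} 2\eta\,\tau_{s,k}v\,\big\langle \tau_{s,k}\big(A(\cdot,v,\G v)\big),\,\G\eta\big\rangle\,dx
+ \int_{B_{2\rho}} h\,\tau_{-s,k}\big(\eta^2\,\tau_{s,k}v\big)\,dx.
\end{align*}
Second, decompose the increment of $A$ along the three slots,
\[
\tau_{s,k}\big(A(\cdot,v,\G v)\big)(x) = \underbrace{\tfrac{A(x+se_k,v(x+se_k),\G v(x+se_k)) - A(x,v(x+se_k),\G v(x+se_k))}{s}}_{\text{(x-slot)}} + \underbrace{\tfrac{A(x,v(x+se_k),\G v(x+se_k)) - A(x,v(x),\G v(x+se_k))}{s}}_{\text{(t-slot)}} + \underbrace{\tfrac{A(x,v(x),\G v(x+se_k)) - A(x,v(x),\G v(x))}{s}}_{\text{(p-slot)}}.
\]
The $x$-slot and $t$-slot are controlled by the Lipschitz hypothesis on $A$: the $x$-slot is bounded pointwise by a constant (the Lipschitz constant of $A$ in $x$), and the $t$-slot is bounded by $C\,|\tau_{s,k}v|$, which is bounded by $\|\G v\|_{L^\infty(B_{2\rho})}$ since $v\in C^1$. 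Paired against $\tau_{s,k}\G v$ these give error terms of the form $\varepsilon\int\eta^2|\tau_{s,k}\G v|^2 + C_\varepsilon$. The $p$-slot, paired against $\tau_{s,k}\G v$ and using the local strong monotonicity \eqref{chapTV:A_ell} with $K$ a fixed compact ball containing all values of $\G v$ on $\overline{B_{2\rho}}$, is bounded below by $c_K\,|\tau_{s,k}\G v|^2$; this furnishes the coercive good term. Third, estimate the right-hand side: the second integral is handled by the elementary bound $\int f\,\tau_{-s,k}g = -\int \tau_{s,k}f\,g$ together with $\|\tau_{-s,k}w\|_{L^2}\le\|\G w\|_{L^2}$, so with $h\in L^\infty$ one gets $\le C\|h\|_\infty\|\G(\eta^2\tau_{s,k}v)\|_{L^1}\le \varepsilon\int\eta^2|\tau_{s,k}\G v|^2 + C_\varepsilon$ after expanding the gradient and absorbing; the first integral is a routine Cauchy–Schwarz/Young term with an $\varepsilon\int\eta^2|\tau_{s,k}\G v|^2$ piece and a lower-order piece controlled by $\|\G v\|_{L^2(B_{2\rho})}$ and $\|\G\eta\|_\infty$. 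Finally, choosing $\varepsilon$ small enough to absorb all the bad terms into $c_K\int_{B_{2\rho}}\eta^2|\tau_{s,k}\G v|^2\,dx$ on the left yields
\[
\int_{B_\rho}|\tau_{s,k}\G v|^2\,dx \le C\big(\rho,\|\G v\|_{L^\infty(B_{2\rho})},\|h\|_{L^\infty},c_K\big)
\]
uniformly in $|s|$ small and in $k=1,\dots,M$. Letting $s\to0$ and using the difference-quotient criterion gives $\partial_{x_k}\G v\in L^2_{loc}(\O')$ for each $k$, hence $v\in W^{2,2}_{loc}(\O')$.

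The main obstacle — and the one step deserving care — is the bookkeeping in the decomposition of $\tau_{s,k}(A(\cdot,v,\G v))$ and, in particular, checking that the monotonicity constant $c_K$ can indeed be taken uniform: this works precisely because $v\in C^1(\O')$ forces $\G v(\overline{B_{2\rho}})$ to lie in a fixed compact set $K$, so \eqref{chapTV:A_ell} applies with a single constant throughout the absorption. A secondary technical point is justifying that $\varphi=\tau_{-s,k}(\eta^2\tau_{s,k}v)\in W^{1,2}$ with compact support is an admissible test function for the weak formulation; this is immediate here since $v\in C^1$, so all difference quotients are themselves $C^1$ and compactly supported for $|s|$ small. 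Everything else is the classical Nirenberg argument and I would only sketch the Young-inequality manipulations rather than write them out in full.
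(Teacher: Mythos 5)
Your proposal is correct and follows essentially the same route as the paper's proof: both are the classical Nirenberg difference-quotient argument, testing with a translated cutoff of the difference quotient of $v$, splitting the increment of $A$ into the $p$-slot (handled by the local strong monotonicity, with $c_K$ uniform because $v\in C^1$ confines $\G v$ to a compact set) and the $x$- and $t$-slots (handled by the Lipschitz bound on $A$), then absorbing via Young's inequality to get a uniform $L^2$ bound on $\Delta_\e(\G v)$. The only difference is cosmetic bookkeeping (a single test function $\tau_{-s,k}(\eta^2\tau_{s,k}v)$ versus subtracting the two translated test functions), which amounts to the same discrete integration by parts.
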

\begin{rmq}\label{chapTV:rmq_A}
Notice that the mean curvature equation 
\begin{align}\label{chapTV:courb_prescr}
\diverg\left(\frac{Dv}{\sqrt{1+|Dv|^2}}\right)=h
\end{align}
is a special case of (\ref{chapTV:MC_w}) corresponding to 
\begin{align*}
A(x,t,p)=\frac{p}{\sqrt{1+|p|^2}}.
\end{align*}
The latter does satisfy the ellipticity condition (\ref{chapTV:A_ell}).\\
\end{rmq}

The proof of the lemma is based on Nirenberg's method (see \cite{Brezis83} for instance for further details). We simply adapt the proof given in \cite[Proposition 7.56]{Ambrosio00}:
\newline
\begin{proof}
Since the property we are interested is local, we can assume that ${\O'}$ is a ball of measure less than 1, that  ${\|\nabla v\|}_\infty\leq M_v$ for some positive $M_v$. We will consider that $K=\overline{B(0,M_v)}$ and will denote respectively $L_v,\ L_A$ the Lipschitz constants of $v,\ A$ in $K$ and $M_A$ the maximum of $A$ over $K$. For any generic function $u$ we denote the difference 
\begin{align*}
\Delta_\e u(x)=\frac{u(x+\e e_i)-u(x)}{\e}
\end{align*}
in the direction $e_i$, $i$ ranging from $1$ to $M$.

Now, in the weak formulation of (\ref{chapTV:MC_w}), we take as test functions $\vphi({\cdot-\e e_i})$ and $\vphi$ with $\vphi\in C^\infty_c({\O'})$, $\e>0$ small enough and substract the two identities to get after a change of variable
\begin{align*}
\frac{1}{\e}\int_{\O'}\langle A(x+\e e_i,v(x+\e e_i),\nabla v(x+\e e_i))-A(x,v(x),\nabla v(x)),\nabla\vphi(x)\rangle dx\\=-\int_{\O'} h\Delta_{-\e}\vphi
\end{align*}
which can be rewritten as
\begin{align*}
\frac{1}{\e}\int_{\O'}\langle \left[A(x,v(x),\nabla v(x+\e e_i))-A(x,v(x),\nabla v(x))\right],\nabla\vphi(x)\rangle dx\\
=-\frac{1}{\e}\int_{\O'}\langle\left[A(x+\e e_i,v(x+\e e_i),\nabla v(x+\e e_i))-A(x,v(x),\nabla v(x+\e e_i))\right],\nabla\vphi(x)\rangle dx\\
-\int_{\O'} h\Delta_{-\e}\vphi.
\end{align*}
We take $\vphi=\eta^2\Delta_\e v$ with $\eta\in C^1_c({\O'})$ and $0\leq\eta\leq1$ a cut-off function.
Notice that $\nabla\vphi=2\eta\nabla\eta\Delta_\e v +\eta^2\Delta_\e(\nabla  v)$
so using (\ref{chapTV:A_ell}) we may estimate the first integral from below by
$$ c_K\int_{\O'}\eta^2{|\Delta_\e(\nabla v)|}^2-2 L_AM_v{\|\nabla\eta\|}_\infty\int_{\O'}\eta|\Delta_\e(\nabla v)|.$$
The second integral can be controlled by
$$L_A\sqrt{1+M_v^2}\left(2M_v{\|\nabla\eta\|}_{\infty}+\int_{\O'}\eta^2|\Delta_\e(\nabla v)|\right).$$
As for the last integral, we get the following bound from above
$$\|h\|_{\infty}\left(6M_v\|\nabla \eta\|_\infty+\int_{\O'}\eta^2|\Delta_\e(\partial_{x_i} v)|\right)$$
exactly as in the proof of \cite[Proposition 7.56]{Ambrosio00}. 

All in all, we get a uniform bound for
$$\int_{\O'}\eta^2|\Delta_\e(\nabla v)|^2$$
when $\e\to 0$. Though we already know that $\Delta_\e(\nabla v)$ converges in the sense of distributions to $\partial_{x_i}(\nabla v)$ so we obtain that it must have a $L^2_{loc}$ representative in ${\O'}$.
\end{proof}
With the previous lemma in hands we can now turn to the proof of Theorem \ref{chapTV:Regularity+_f}:
\newline
\begin{proof}[Proof of Theorem \ref{chapTV:Regularity+_f}]
As will be detailed in the proof of 
Theorem \ref{chapTV:thm_jump_f} (see especially equation (\ref{chapTV:EL_f})) the level sets $\partial E_t$ can be locally represented as the graph of a $C^1$ function $v$ that satisfies the following Euler-Lagrange equation 
\begin{align*}
\diverg_{x'}\left(\nabla_{p'}\Phi(\cdot,v,-\nabla_{x'}v,1)\right)+\partial_{x_N}\Phi(\cdot,v,-\nabla v,1)=\partial_t\Psi\left((\cdot,v),t\right)
\end{align*}
over a ball $B'\subset\R^{N-1}$ and note that we used the notation $x=(x',x_N)$ and $p=(p',p_N)$.\\

This actually means that function $v$ solves in a weak sense
\begin{align*}
-\diverg_{x'}\left(\nabla_{p'}\Phi(\cdot,v,-\nabla_{x'}v,1)\right)=h
\end{align*}
for some $h\in L^\infty(B')$.\\


Now, $\G_p\Phi$ being Lipschitz continuous (see Remark \ref{chapTV:Rmq_def_Phi}), we can apply Lemma \ref{chapTV:lem_A} so $v$ is in $W^{2,2}_{loc}(B')$ and 
we are allowed to expand the divergence. Doing so, we find that $v$ satisfies in a weak sense the following identity
\begin{align*}
-(\diverg_{x'}\G_{p'}\Phi)(\cdot,v,-\G_{x'} v,1)-\nabla_{x'} v\cdot(\partial_{x_N} \G_{p'} \Phi)(\cdot,v,-\G_{x'}v,1)\\
+\ \text{tr}\left({D^2_{x'}v}^TD^2_{p'}\Phi(\cdot,v,-\G_{x'} v,1)\right)=h
\end{align*}
which can be rewritten as
\begin{align*}
\sum_{i,j=1}^{N-1}a_{i,j}\partial_{x_i x_j} v=\tilde{h}
\end{align*}
with
\begin{align*}
a_{i,j}=&\partial_{p_ip_j}\Phi(\cdot,v,-\G_{x'}v,1),\\ 
\tilde{h}=&h+(\diverg_{x'}\G_{p'}\Phi)(\cdot,v,-\G_{x'} v,1)\\
&+\nabla_{x'} v\cdot(\partial_{x_N} \G_{p'} \Phi)(\cdot,v,-\G_{x'}v,1)\in L^\infty(B').
\end{align*}
The $W^{2,p}$ regularity for any $1<p<\infty$ follows from well-known results on the regularity of solutions of elliptic partial differential equations in general form with continuous coefficients (see Theorem \ref{chapTV:thm_reg_edp}). Then Morrey's inequality (see Remark \ref{chapTV:Morrey}) yields the $C^{1,\alpha}$ regularity for any $\alpha<1$. 
\end{proof}
\begin{rmq} Clearly the regularity theorem for elliptic equations in divergence form cannot be applied since the coefficients lack regularity.
This is the reason why we proceeded by first proving Lemma \ref{chapTV:lem_A} to be able to differentiate $\nabla_{x'}v$ and use the second regularity theorem for PDEs in non-divergence form.
\end{rmq} 

\subsection{The discontinuities of solutions of the anisotropic minimum problem} 
We are now ready to state the main result of this section:\\
\begin{thm}\label{chapTV:thm_jump_f}
Let $\Phi$ be a strongly convex reversible Finsler integrand of class $C^2$ on $\O\times\R^N\setminus\{0\}$, $\Psi$ be as in $(H9)$ and that satisfies in addition
for some countable $D$ dense in $\R$
\begin{align*}
\partial_t\Psi(\cdot,t)\in\BV\cap L^\infty(\O)\ \forall t\in D.
\end{align*}
If $u\in\BV$ is the minimizer of (\ref{chapTV:ROF_f}), then one has 
$$ J_u\subset \bigcup_{\substack{t\in D}} J_{\partial_t\Psi(\cdot,t)}$$
up to a $\H^{N-1}$-negligible set.
\end{thm}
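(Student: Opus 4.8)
The plan is to adapt the coarea/level-set argument of \cite{ChamJump}, the affine curvatures $t-g_{\mathrm{dat}}$ of the superlevel sets of the $(ROF)$ minimizer being now replaced by the functions $\partial_t\Psi(\cdot,t)$. Since $J_u$ is countably $\H^{N-1}$-rectifiable and $D$ is countable, it suffices to prove that $\H^{N-1}$-a.e.\ point of $J_u$ lies in $\bigcup_{t\in D}J_{\partial_t\Psi(\cdot,t)}$. Argue by contradiction: fix $x_0\in J_u$ with (after relabelling the traces) $u^-(x_0)<u^+(x_0)$ and $x_0\notin\bigcup_{t\in D}J_{\partial_t\Psi(\cdot,t)}$. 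Writing $E_t:=\{u>t\}$, discard from $J_u$ the $\H^{N-1}$-negligible sets $\bigcup_{t\in D}\big(S_{\partial_t\Psi(\cdot,t)}\setminus J_{\partial_t\Psi(\cdot,t)}\big)$ (since $\H^{N-1}(S_f\setminus J_f)=0$ for $f\in BV$) and, for each $t\in D$, the set of points of density $\tfrac12$ of $E_t$ that do not lie in $\partial^*E_t$ (Federer's theorem). After this reduction $\partial_t\Psi(\cdot,t)$ has an approximate limit at $x_0$ for every $t\in D$, and $x_0\in\partial^*E_t$ for every $t$ in $I_D:=D\cap(u^-(x_0),u^+(x_0))$, which is dense in the nondegenerate open interval $(u^-(x_0),u^+(x_0))$.

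\emph{Step 1 (level surfaces are regular graphs through $x_0$).} Fix $t\in I_D$. Then $\partial_t\Psi(\cdot,t)\in BV\cap L^\infty(\O)$, $\Phi$ is a strongly convex $C^2$ Finsler integrand hence Lipschitz in $p$ uniformly in $x$ (Remarks~\ref{chapTV:Rmq_def_Phi}), and, by Proposition~\ref{chapTV:LevelSetPb_f_thm}, $E_t$ solves the minimal surface problem~(\ref{chapTV:LevelSetPb_f}). Theorem~\ref{chapTV:Regularity+_f} gives $\partial^*E_t\in W^{2,p}$ for all $p<\infty$ and $C^{1,\alpha}$ for all $\alpha<1$, with $\partial E_t\setminus\partial^*E_t$ closed. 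Since $x_0\in J_u$ and $u^-(x_0)<t<u^+(x_0)$, $\chi_{E_t}$ has approximate limits $1$ and $0$ on the two half-spaces bounded by the hyperplane through $x_0$ orthogonal to $\nu(x_0)$, so $x_0$ has density $\tfrac12$ in $E_t$; hence, after the reduction, $x_0\in\partial^*E_t$ and, the singular set being closed, $x_0$ lies at positive distance from it. Choosing Euclidean coordinates adapted to $\nu(x_0)$ (and orienting them so that $E_t$ is, near $x_0$, the subgraph $\{x_N<v_t(x')\}$), writing $x=(x',x_N)$, $p=(p',p_N)$, there is a ball $B'\subset\R^{N-1}$ about $x_0'$ on which $\partial E_t$ is the graph $x_N=v_t(x')$ with $v_t\in W^{2,p}(B')\cap C^{1,\alpha}(B')$, $v_t(x_0')=(x_0)_N$ and $\nabla v_t(x_0')=0$.

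\emph{Step 2 (Euler--Lagrange and the tangential touching).} Taking the first variation of $P_\Phi(\cdot,\O)+\int_{\cdot}\partial_t\Psi(x,t)\,dx$ under vertical perturbations of the graph, $v_t$ solves weakly on $B'$
\begin{equation}\label{chapTV:EL_f}
\diverg_{x'}\big(\nabla_{p'}\Phi(\cdot,v_t,-\nabla_{x'}v_t,1)\big)+\partial_{x_N}\Phi(\cdot,v_t,-\nabla v_t,1)=\partial_t\Psi\big((\cdot,v_t),t\big),
\end{equation}
the right-hand side being the trace on the Lipschitz graph $\{x_N=v_t(x')\}$ of $\partial_t\Psi(\cdot,t)\in BV\cap L^\infty$, which near $x_0$ is single-valued and equal to its approximate limit since $x_0\notin J_{\partial_t\Psi(\cdot,t)}$. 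Now fix $t_1<t_2$ in $I_D$; by Lemma~\ref{chapTV:NivCroissants}, $E_{t_2}\subset E_{t_1}$, so on a common ball $B'$ one has $v_{t_2}\le v_{t_1}$, whence $w:=v_{t_1}-v_{t_2}\ge0$ with $w(x_0')=0$ an interior minimum and $\nabla w(x_0')=0$: the two level surfaces touch tangentially at $x_0$. If one shows $w\equiv0$ near $x_0'$, then $v:=v_{t_1}=v_{t_2}$ there solves~(\ref{chapTV:EL_f}) for both $t_1$ and $t_2$, forcing $\partial_t\Psi((\cdot,v),t_1)=\partial_t\Psi((\cdot,v),t_2)$ $\H^{N-1}$-a.e.\ on the graph of $v$; this contradicts strict convexity of $\Psi(x,\cdot)$, which yields $\partial_t\Psi(x,t_1)<\partial_t\Psi(x,t_2)$ for \emph{every} $x$. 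So everything reduces to the tangential-coincidence statement: two nested level surfaces that touch tangentially at $x_0$ must coincide near $x_0$.

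\emph{Step 3 (tangential coincidence --- the main obstacle).} Subtracting the two equations~(\ref{chapTV:EL_f}) and using the strict monotonicity of $p'\mapsto\nabla_{p'}\Phi(x,p',1)$ (the ellipticity assumption (H8)), together with $\Phi\in C^2$ and $v_{t_1},v_{t_2}\in C^{1,\alpha}\cap W^{2,p}$, $w$ satisfies weakly on a smaller ball a linear uniformly elliptic equation $\mathcal{L}w=g$ with bounded coefficients and continuous principal part, where $g=\partial_t\Psi\big((\cdot,v_{t_1}),t_1\big)-\partial_t\Psi\big((\cdot,v_{t_2}),t_2\big)$. If $g$ were a negative constant --- as it is for a $(ROF)$ datum independent of the $\nu(x_0)$-direction, where $g=t_1-t_2$ --- the strong maximum principle applied to the supersolution $w\ge0$ having interior minimum $0$ (see \cite{Gilbarg}) would at once give $w\equiv0$ near $x_0'$, and Step~2 would conclude; this is the mechanism behind \cite{ChamJump}. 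In general, however --- and already for a generic $(ROF)$ datum in $BV$ --- $g$ is merely in $L^\infty$: writing
\[
g=\big[\partial_t\Psi((\cdot,v_{t_1}),t_1)-\partial_t\Psi((\cdot,v_{t_1}),t_2)\big]+\big[\partial_t\Psi((\cdot,v_{t_1}),t_2)-\partial_t\Psi((\cdot,v_{t_2}),t_2)\big],
\]
the first bracket is $\le0$ by strict convexity, but the second bracket is the increment of the merely $BV$ function $\partial_t\Psi(\cdot,t_2)$ along the vertical segment from $v_{t_2}(x')$ to $v_{t_1}(x')$: it has no sign. The remedy --- the delicate point of the proof --- is to run the comparison in integrated form: test the difference equation against $w\,\eta^2$ with $\eta$ a cut-off concentrated at $x_0'$, use (H8) to obtain a Caccioppoli inequality bounding $\int\eta^2|\nabla w|^2$ by terms in $\nabla\eta$ and by $\int g\,w\,\eta^2$, discard the nonpositive contribution of the first bracket of $g$, and estimate the contribution of the second bracket by $\|w\|_{L^\infty(\mathrm{supp}\,\eta)}\,|D_{x_N}\partial_t\Psi(\cdot,t_2)|(\text{thin tube around }x_0)$, which tends to $0$ --- and is negligible against the elliptic term --- as $\mathrm{supp}\,\eta\downarrow\{x_0'\}$, because $\partial_t\Psi(\cdot,t_2)\in BV$ and $\|w\|_{L^\infty}\to0$. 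This forces $\nabla w\equiv0$, i.e.\ $w\equiv0$, near $x_0'$, and Step~2 then yields the desired contradiction. Making this $BV$-aware comparison at a tangential contact point precise is the core technical difficulty of the theorem.
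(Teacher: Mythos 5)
Your setup (reduction to pairs of level sets, the regularity theorems, the graph representation, the one-sided Euler--Lagrange inequalities with one-sided traces of $\partial_t\Psi(\cdot,t_i)$) coincides with the paper's. But you then diverge at the decisive step, and the route you choose has a genuine gap. You try to prove that two nested level surfaces touching tangentially at a single point $x_0$ must \emph{coincide} in a neighbourhood of $x_0$, and you derive the contradiction from that coincidence. The strong maximum principle is unavailable (as you note, the right-hand side $g$ has no sign in the $BV$ setting), and the ``$BV$-aware'' Caccioppoli substitute you sketch does not work: the inequality $c\int\eta^2|\nabla w|^2\leq C\int|\nabla\eta|^2w^2+\int gw\eta^2$ cannot force $w\equiv0$ on a fixed neighbourhood, because with $\eta\equiv1$ on that neighbourhood the term $\int|\nabla\eta|^2w^2$ lives on an annulus where $w$ is not small and does not vanish, while if you instead shrink $\mathrm{supp}\,\eta$ to $\{x_0'\}$ then \emph{every} term, including $\int\eta^2|\nabla w|^2$, tends to zero and nothing is learned. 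Moreover the local-coincidence statement itself is both stronger than what is needed and not to be expected: nothing prevents two nested $C^{1,\alpha}$ graphs from touching at an isolated point, and the theorem only asserts that the contact set is $\H^{N-1}$-negligible. You explicitly flag this step as ``the core technical difficulty,'' i.e.\ it is left unproved.

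The paper avoids this entirely by keeping the contradiction hypothesis \emph{quantitative}: one assumes $\H^{N-1}\bigl((\partial E_{t_1}\cap\partial E_{t_2})\setminus(S_{\partial_t\Psi(\cdot,t_1)}\cup S_{\partial_t\Psi(\cdot,t_2)})\bigr)>0$, so that one may pick a contact point $\bar x'$ which is in addition a Lebesgue point of $v_i$, $\nabla_{x'}v_i$ and $D^2_{x'}v_i$ for both $i$. At such a point the ordering $v_2\geq v_1$ with equality at $\bar x'$ gives $\nabla_{x'}v_1(\bar x')=\nabla_{x'}v_2(\bar x')$ and $D^2_{x'}v_1(\bar x')\leq D^2_{x'}v_2(\bar x')$, and subtracting the two pointwise Euler--Lagrange identities (which are genuine equalities there, since $\bar x\notin S_{\partial_t\Psi(\cdot,t_i)}$) leaves only
\begin{align*}
D^2_{p'}\Phi(\bar x,\nu_E(\bar x)):\bigl(D^2_{x'}v_1(\bar x')-D^2_{x'}v_2(\bar x')\bigr)=\partial_t\Psi(\bar x,t_2)-\partial_t\Psi(\bar x,t_1)>0,
\end{align*}
while the left-hand side is $\leq0$ as the contraction of a nonnegative matrix (by $(H8)$) with a nonpositive one. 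No maximum principle, no local coincidence, and no sign information on the $BV$ increment along the vertical segment is ever required. If you replace your Step 3 by this pointwise second-order comparison at an $\H^{N-1}$-generic contact point, your argument closes.
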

\begin{rmq}
\begin{trivlist}
\item[$(i)$]When $\Phi$ does not depend on $x$ and if we set $\Psi(x,t)$ to get the classical quadratic data fidelity term the result was already stated in \cite{ChamJump} and is the key step to get an extension of this theorem when dealing with $TV_\e$. This is not trivial since the latter functional is not positively 1-homogeneous. Let us denote $u_\la$ the minimizer of 
\begin{align*}
\min_{u\in\BV}\la\int_{\O} \sqrt{1+|D{u}|^2}+\frac{1}{2}{\|{u}-{g}\|}_2^2
\end{align*}
where without loss of generality we dropped the $\e$.

The trick is to add another dimension and consider the functions
\begin{align*}
\tilde{u}(x,x_{N+1})&:=u(x)+x_{N+1},\\
\tilde{g}(x,x_{N+1})&:=g(x)+x_{N+1}.
\end{align*}
Then it is possible to prove
that $\tilde{u}_\la$ minimizes (locally)
\begin{align*}
\la\int_{\O\times\R\subset\R^{N+1}} |D\tilde{u}|+\frac{1}{2}{\|\tilde{u}-\tilde{g}\|}_2^2
\end{align*}
thus $J_{u_\la}\times\R=J_{\tilde{u}_\la}\subset J_{\tilde{g}}=J_{g}\times\R$. 
\item[$(ii)$] The ellipticity assumption for $\Phi$ is necessary. Indeed, in \cite{Bellettini99}, it is shown that, in the crystalline case $\Phi(p)={\|p\|}_1$ in dimension $N=2$, it is possible to construct a function $g$ such that one has for the corresponding minimizer $u$, $J_g\not\subset J_u$.\\
\end{trivlist}
\end{rmq}
Our proof follows closely the one given by Caselles, Chambolle and Novaga in \cite{ChamJump}:
\newline
\begin{proof}
Since for any countable set $D$ dense in $\R$
\begin{align*}
J_u\subset\bigcup_{\substack{t_1,t_2\in D\\t_1<t_2}}\partial E_{t_1}\cap \partial E_{t_2},
\end{align*}
it is sufficient to prove that for all $t_1$, $t_2\in D$
\begin{align*}
\partial E_{t_1}\cap \partial E_{t_2}\subset J_{\partial_t\Psi(\cdot,t_1)}\cup J_{\partial_t\Psi(\cdot,t_2)} 
\end{align*}
up to a $\H^{N-1}$-negligible set.\\

To prove the latter inclusion, we are going to reason by contradiction and assume that there are $t_1\not=t_2$ such that
$$\H^{N-1}\left((\partial E_{t_1}\cap \partial E_{t_2})\setminus (J_{\partial_t\Psi(\cdot,t_1)}\cup J_{\partial_t\Psi(\cdot,t_2)})\right)>0.$$
Given that 
$$\H^{N-1}\left((S_{\partial_t\Psi(\cdot,t_1)}\cup S_{\partial_t\Psi(\cdot,t_2)})\setminus (J_{\partial_t\Psi(\cdot,t_1)}\cup J_{\partial_t\Psi(\cdot,t_2)})\right)=0,$$
where we recall that $\O\setminus S_{\partial_t\Psi(\cdot,t_i)}$ is the set of approximate continuity points of $\partial_t\Psi(\cdot,t_i)$ (see Definition \ref{chapTV:def_jump}),  
it is equivalent to assume that
$$\H^{N-1}\left((\partial E_{t_1}\cap \partial E_{t_2})\setminus (S_{\partial_t\Psi(\cdot,t_1)}\cup S_{\partial_t\Psi(\cdot,t_2)})\right)>0.$$
By Theorem \ref{chapTV:Regularity_f}, one can get rid of the closed set where the boundary $\partial E_{t_1}$ and $\partial E_{t_2}$ are not regular and place ourself at a point 
\begin{align*}
\bar{x}\in\partial^* E_{t_1}\cap \partial^* E_{t_2}\setminus (S_{\partial_t\Psi(\cdot,t_1)}\cup S_{\partial_t\Psi(\cdot,t_2)})
\end{align*}
such that both these boundaries can be represented as graphs in the vicinity of $\bar{x}$. That is to say, up to a Euclidian motion, there is a cylindrical neighborhood $\{(x',x_N)\in \R^N\ /\ |x'|<R,\ -R<x_N<R\}$ of $\bar{x}=\left({\bar{x}}',{\bar{x}}_N\right)$ for some small $R>0$ such that $E_{t_i}, i\in\{1,2\}$ coincides with the epigraph of a function $v_i : B({\bar{x}}',R)\rightarrow (-R,R)$ of class $W^{2,q}$, for any $q\geq 1$. Again, throwing away $\H^{N-1}$-negligible sets one can assume that ${\bar{x}}'$ is a Lebesgue point of $v_i$, $\nabla_{x'} v_i$ and $D^2_{x'} v_i$. Actually, by Rademacher-Calder\'on's theorem, we know that $v_i$ and $\nabla_{x'} v_i$ are differentiable a.e. on $B'$ but this stronger result will not be necessary in what follows.\\

By Proposition \ref{chapTV:LevelSetPb_f_thm}, we know that both superlevels $E_{t_i},\ i\in\{1,2\}$ solve the following
$$\min_E \int_{\partial^*E} \Phi(x,\nu_{E})d\mathcal{H}^{N-1}+\int_E \partial_t\Psi(x,t_i)dx$$
where we minimize over all sets of finite perimeter in $\O$.
By doing compact modifications in the ball $B'=B({\bar{x}}',R)$, one can see that the graph $v_i,\ {i\in\{1,2\}}$, minimizes
\begin{align*}
I(v)=\int_{B'} \Phi\left(x',v(x'),\frac{\left(-\nabla_{x'} v(x'),1\right)}{\sqrt{1+|\nabla_{x'} v(x')|^2}}\right)\sqrt{1+|\nabla_{x'} v(x')|^2}\ dx'\\
	+\int_{B'}\int^R_{v(x')}\partial_t\Psi\left((x',x_N),t_i\right) dx_N dx'.
\end{align*}
This means that, for any positive perturbation $\vphi\in C^\infty_c(B')$ of the level set $\partial E_{t_i}$ with $i\in\{1,2\}$,
\begin{align*}
\lim_{\substack{{\e\to 0}\\\e>0}}\frac{I(v_i+\e\vphi)-I(v_i)}{\e}\geq0.
\end{align*}
On the other hand, if we denote $p'=(p_1,...,p_{N-1})$,
\begin{align}\label{chapTV:EL_f} 
I(v_i+\e\vphi)=I(v_i)+\e&\int_{B'}\bigg(\partial_{x_N}\Phi\left(x',v_i(x'),-\G_{x'} v_i(x'),1\right)\vphi(x')\notag\\
&-\G_{p'}\Phi\left(x',v_i(x'),-\G_{x'} v_i(x'),1\right)\cdot\G_{x'}\vphi(x')\notag\\
&-\int^{v_i(x')+\e\vphi(x')}_{v_i(x')}\partial_t\Psi\left((x',x_N),t_i\right)dx_N\bigg)dx'+o(\e).
\end{align}
One should note that the partial differentiations are done in the new set of coordinates.
If we integrate by parts, which is possible by Theorem \ref{chapTV:Regularity+_f}, and given the slicing properties of $BV$ functions (see \cite{Ambrosio00}), one has for $\H^{N-1}$-a.e. $x'\in B'$
\begin{align*}
\partial_{x_N}\Phi ({x},\nu_{E_{t_i}}({x}))+[\diverg_{x'}&\G_{p'}\Phi]({x},\nu_{E_{t_i}}({x}))+\nabla_{x'} v_i({{x}}')\cdot[\partial_{x_N} \G_{p'} \Phi]({x},\nu_{E_{t_i}}({x}))\notag\\
&- D^2_{x'}v_i({\bar{x}}'):D^2_{p'}\Phi({x},\nu_{E_{t_i}}({x}))-\partial_t\Psi\left({x},t_i+0\right)\geq0,
\end{align*}
where $D^2_{x'}v_i({{x}}'):D^2_{p'}\Phi({x},\nu_{E_{t_i}}({x}))$ designates the tensor contraction of the Hessians and is defined as follows
\begin{align*}
D^2_{x'}v_i({{x}}'):D^2_{p'}\Phi({x},\nu_{E_{t_i}}({x}))&=\text{tr}\left({D^2_{x'}v_i}^TD^2_{p'}\Phi({x},\nu_{E_{t_i}}({x}))\right)\\
&=\sum_{k,l=1}^{N-1}\partial_{x_kx_l} v_i({{x}}')\ \partial_{p_kp_l}\Phi({x},\nu_{E_{t_i}}({x})).
\end{align*}
Reasoning in the same way with $\e<0$, one gets
\begin{align*}
\partial_{x_N}\Phi ({x},\nu_{E_{t_i}}({x}))-[\diverg_{x'}&\G_{p'}\Phi]({x},\nu_{E_{t_i}}({x}))-\nabla_{x'} v_i({{x}}')\cdot[\partial_{x_N} \G_{p'} \Phi]({x},\nu_{E_{t_i}}({x}))\notag\\
&+ D^2_{x'}v_i({\bar{x}}'):D^2_{p'}\Phi({x},\nu_{E_{t_i}}({x}))-\partial_t\Psi\left({x},t_i-0\right)\leq0.
\end{align*}
Now, without loss of generality, we can assume $t_2>t_1$. By {Lemma \ref{chapTV:NivCroissants}}, which asserts that $v_2\geq v_1$ a.e. on $B'$, one has
\begin{align*}
v_1({\bar{x}}')&=v_2({\bar{x}}'),\\
\nabla_{x'} v_1({\bar{x}}')&=\nabla_{x'} v_2({\bar{x}}'),\\
D^2_{x'}v_1({\bar{x}}')&\leq D^2_{x'}v_2({\bar{x}}').
\end{align*}
Since in addition we assumed $\bar{x}\not\in S_{\partial_t\Psi(\cdot,t_i)}$ for $i\in\{1,2\}$, we find
\begin{align}\label{chapTV:EL_f_x}
\partial_{x_N}\Phi (\bar{x},\nu_{E_{t_i}}(\bar{x}))+[\diverg_{x'}&\G_{p'}\Phi](\bar{x},\nu_{E_{t_i}}(\bar{x}))+\nabla_{x'} v_i({\bar{x}}')\cdot\partial_{x_N} \G_{p'} \Phi(\bar{x},\nu_{E_{t_i}}(\bar{x}))\notag\\
&- D^2_{x'}v_i({\bar{x}}'):D^2_{p'}\Phi(\bar{x},\nu_{E_{t_i}}(\bar{x}))-\partial_t\Psi\left(\bar{x},t_i\right)=0.
\end{align}
Therefore, substracting the equations (\ref{chapTV:EL_f_x}) we got for the two values of $i$ and using the strict convexity assumption on $\Psi$ (see $(H9)$), we are simply left with
$$D^2_{p'}\Phi(\bar{x},\nu_{E}(\bar{x})):\left(D^2_{x'}v_1({\bar{x}}')-D^2_{x'}v_2({\bar{x}}')\right)=\partial_t\Psi(\bar{x},t_2)-\partial_t\Psi(\bar{x},t_1)>0.$$
Whereas, by non-negativity of $\Phi$ which is asserted by $(H8)$, it follows (see \cite[p. 218]{Lancaster})
$$ D^2_{p'}\Phi(\bar{x},\nu_{E}(\bar{x})):\left(D^2_{x'}v_1({\bar{x}}')-D^2_{x'}v_2({\bar{x}}')\right)\leq 0$$
hence the contradiction.
\end{proof}

\section{Refinement for the weighted Total Variation}
In this section, we are going to focus on the case when the integrand $\Phi$ is simply given by a weight against the total variation measure namely
\begin{align*}
\Phi(x,p)=w(x)|p|.
\end{align*}
For simplicity, we consider that
\begin{align*}
\Psi(x,t)=\frac{1}{2}{\|t-g\|}_2^2.
\end{align*}
This corresponds to the classical quadratic data fidelity term for some Lebesgue measurable $g$. 
Function $\Phi$ is a strongly convex reversible Finsler integrand as soon as \\
\begin{itemize}
\item[$\textbf{(H10)}$] $w:\O\rightarrow\R$ is positive, $\beta$-Hölder with $\beta\in(0,1]$ and there exists a positive real $C_w$ such that
$C_w^{-1}\leq w\leq C_w.$\\
\end{itemize}
Henceforth, $w$ will satisfy this assumption.\\ 

To sum up, from now on, given $w$ that satisfies $(H10)$, we are interested in the minimizer $u$ of the following problem
\begin{align}\label{chapTV:ROF_w}
\min_{u\in\BV}\int_\O w|Du|+\frac{1}{2}{\|u-g\|}^2_2.
\end{align}
Its superlevels $E_t=\{u>t\}$ solve the minimal surface problem
\begin{align}\label{chapTV:LevelSetPb_w}
\min_E P_w(E,\O)+\int_E (t-g(x))dx
\end{align}
over sets of finite perimeter in $\O$, where we recall that 
$$P_w(E,\O)=\int_{\partial^*E\cap\O}w(x)d\H^{N-1}(x)$$
is the weighted perimeter.\\

All the results we developed for general Finsler integrands are still valid in this special case. In particular, the anisotropic coarea formula and the regularity Theorem \ref{chapTV:Regularity_f} for quasi minimizers of the perimeter hold true.

%

\subsection{More regularity in the weighted case}
\label{chapTV:weighted_case}

As already seen in the proof of Theorem \ref{chapTV:thm_jump_f}, it is important to be able to say that the level sets of minimizers of problems involving the total variation are regular namely $C^{1,\alpha}$ for some $\alpha\in(0,1/2)$. Such results stem from the theory of regularity of minimal surfaces and have become classical in the literature. We already mentioned the works \cite{Almgren77,Schoen,Bombieri,Duzaar} that deal with minimizers or quasi-minimizers of the perimeter in the anisotropic setting. In particular, they establish the regularity of minimizers of the perimeter that have a prescribed curvature.\\

Similar regularity results have also been established for various constraints. 
Let us mention two recent examples.
In \cite{Morgan03}, Frank Morgan proved such a regularity for isoperimetric surfaces. Indeed, he shows that an isoperimetric hypersurface of dimension at most six in a smooth Riemannian manifold is a smooth submanifold. If the metric is merely Lipschitz, then it is still $C^{1,\alpha}$ for any $\alpha>1$.\\

In the recent article \cite{Figalli}, Figalli and Maggi are led to consider a problem with both a constraint on the curvature and on the volume \emph{i.e.}
\begin{align}\label{chapTV:Pb_Figalli}
\min_E\left\{\int_{\partial^*E} \Phi(\nu_E)d\H^{N-1}+\int_E g\ /\ |E|=m\right\}
\end{align}
for a positively 1-homogeneous elliptic $\Phi$, coercive $g$ and for small $m$.
In Appendix C of their work, they discuss the $C^{1,\alpha}$ regularity of a solution of (\ref{chapTV:Pb_Figalli}) as a consequence of the regularity theory for quasi-minimizers of the anisotropic perimeter \cite{Duzaar}.\\

The papers we just cited make a wide usage of the language of currents and varifolds. Some other works (\cite{Massari74,Massari75,MassariPepe75,Tamanini82,Tamanini,Ambrosio99}) are based on techniques that date back to the results of De Giorgi and deal with these regularity issues in the framework of sets of finite perimeter. 
The proof we provide is simple and does not rely on the theory of currents. Our result is partially contained in the above-mentioned Theorem \ref{chapTV:Regularity_f} for the anisotropic total variation which follows from \cite{Duzaar} but to our knowledge there is no simple proof of it in the context of sets of finite perimeter.\\

First, let us recall the concept of quasi-minimizer:
\newline
\begin{deff}
Let $E$ be a set of finite perimeter in $\O$, $w$ satisfy assumption $(H10)$, $\alpha\in(0,1)$ and $\Lambda\geq0$. Then $E$ is a \textit{$(\Lambda,\alpha)$-quasi-minimizer of the perimeter $P_w$ in $\O$} or simply \textit{quasi-minimizer} if
\begin{align}\label{chapTV:quasi_min_def}
P_w\left(E,B(x,r)\right)\leq P_w\left(F,B(x,r)\right)+\Lambda {|E\Delta F|}^{1+\frac{2\alpha-1}{N}}
\end{align}
for any ball $B(x,r)\subset\subset\O$ with $r>0$ and any $F\subset\O$ of finite perimeter such that $F\Delta E\subset\subset B(x,r)$.
\end{deff}
\begin{rmq}\label{chapTV:def_Tam}
We could have replaced (\ref{chapTV:quasi_min_def}) by the weaker condition
\begin{align}\label{chapTV:quasi_min_def_bis}
P_w\left(E,B(x,r)\right)\leq P_w\left(F,B(x,r)\right)+\Lambda r^{N-1+2\alpha}
\end{align}
but for simplicity we refer to \cite{Tamanini82,Tamanini} where the author considers this definition.
\end{rmq}
The aim is to show that the following regularity for quasi-minimizers of the weighted perimeter holds:
\newline
\begin{thm}\label{chapTV:quasi_min}
Let $\O$ be an open set of $\R^N$, $N\geq2$, $w:\O\to\R\ \beta$-Hölder for some $\beta\in(0,1]$ and such that there is a positive real $C_w$ with $C_w^{-1}\leq w\leq C_w$. Consider also $\alpha\in(0,\frac{1}{2}),\ \Lambda\geq0$ and $E$ a set of finite perimeter that is a $(\Lambda,\alpha)$-minimizer of the perimeter $P_w$.\\\\
Then, if we denote $\gamma=\min(\alpha,\frac{\beta}{2})$, the reduced boundary $\partial^* E$ is a $C^{1,{\gamma}}$-hypersurface and
$$\H^s(\partial E\setminus\partial^*E)=0$$
for every $s>N-8$.\\\\
Moreover, let us assume that ${(E_h)}_{h\in\R}$ are $(\Lambda,\alpha)$-minimizers of the perimeter $P_w$ with $E_h$ converging locally to some limit set $E_\infty$ as $h\to+\infty$. If $x_h\in\partial E_h$ for every $h$ and converges as $h\to+\infty$ to some $x_\infty\in\O$ then $x_\infty\in\partial E_\infty$. If, in addition, $x_\infty\in\partial^*E_\infty$ then there exists $h_0$ such that for $h\geq h_0$, $x_h\in\partial^*E_h$ and the unit outward normal to $\partial^*E_h$ at $x_h$ converges to the unit outward normal to $\partial^*E_\infty$ at $x_\infty$.\\
\end{thm}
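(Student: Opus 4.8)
The plan is to \emph{freeze the weight at small scales} and reduce Theorem~\ref{chapTV:quasi_min} to the De Giorgi--Tamanini regularity theory for almost-minimizers of the unweighted perimeter. Since $w$ is $\beta$-Hölder with $C_w^{-1}\le w\le C_w$, on a ball $B(x_0,r)\subset\subset\O$ with $r$ below a threshold depending only on $N,C_w,\beta$ one has $|w(x)-w(x_0)|\le Cr^{\beta}\le\tfrac12 w(x_0)$; then, for every competitor $F$ of finite perimeter with $F\Delta E\subset\subset B(x_0,r)$,
\begin{align*}
\big(w(x_0)-Cr^{\beta}\big)P(E,B(x_0,r))\le P_w(E,B(x_0,r))&\le P_w(F,B(x_0,r))+\Lambda|E\Delta F|^{1+\frac{2\alpha-1}{N}}\\
&\le\big(w(x_0)+Cr^{\beta}\big)P(F,B(x_0,r))+\Lambda|E\Delta F|^{1+\frac{2\alpha-1}{N}},
\end{align*}
so that, dividing, $P(E,B(x_0,r))\le(1+C_1r^{\beta})P(F,B(x_0,r))+C_2\Lambda|E\Delta F|^{1+\frac{2\alpha-1}{N}}$ with $C_1,C_2$ depending only on $N,C_w,\Lambda$.

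Next I would record the standard upper density bound: comparing $E$ with $E\setminus B(x_0,\rho)$ for a.e.\ small $\rho$ gives $P_w(E,B(x_0,\rho))\le C_w\,\H^{N-1}\!\big(E^{(1)}\cap\partial B(x_0,\rho)\big)+\Lambda|E\cap B(x_0,\rho)|^{1+\frac{2\alpha-1}{N}}$, hence $P(E,B(x_0,\rho))\le C_3\rho^{N-1}$ for $\rho$ small. Feeding this back into the inequality above, in the only non-trivial case $P(F,B(x_0,r))\le P(E,B(x_0,r))$ and using $|E\Delta F|\le\omega_N r^{N}$, one obtains
\begin{align*}
P(E,B(x_0,r))\le P(F,B(x_0,r))+C_4\,r^{N-1+2\gamma},\qquad \gamma:=\min\Big(\alpha,\tfrac{\beta}{2}\Big)\in\Big(0,\tfrac12\Big),
\end{align*}
for all $r$ below the threshold; that is, $E$ is a $(\Lambda',\gamma)$-almost-minimizer of the \emph{unweighted} perimeter in the sense of $(\ref{chapTV:quasi_min_def_bis})$, with $\Lambda'$ depending only on $N,C_w,\beta,\Lambda,\alpha$. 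Now the classical theory \cite{Tamanini82,Tamanini} applies verbatim: $\partial^*E$ is a $C^{1,\gamma}$ hypersurface, $\partial E\setminus\partial^*E$ is relatively closed in $\O$ (the density estimates make $\partial^*E$ relatively open in $\partial E$), and, blow-ups of an almost-minimizer being perimeter-minimizing cones, Federer's dimension reduction gives $\H^s(\partial E\setminus\partial^*E)=0$ for every $s>N-8$. This settles the first two assertions.

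For the stability statement the key observation is that all the constants above ($C_1,\dots,C_4,\Lambda',\gamma$, the density constants, and the $\varepsilon$-regularity constants of \cite{Tamanini82,Tamanini}) depend only on $N,C_w,\beta,\Lambda,\alpha$ and on a fixed compact neighbourhood of $x_\infty$, hence are the \emph{same} for all the $E_h$. In particular there is $c_0>0$ with $c_0\rho^{N}\le|E_h\cap B(x_h,\rho)|\le(1-c_0)|B(x_h,\rho)|$ for all small $\rho$ and all $h$; letting $h\to+\infty$ (using $x_h\to x_\infty$ and $E_h\to E_\infty$ in $L^1_{loc}$) the same holds for $E_\infty$ at $x_\infty$, so $x_\infty$ has density in $(0,1)$, i.e.\ $x_\infty\in\partial E_\infty$. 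Moreover $E_\infty$ is again a $(\Lambda,\alpha)$-minimizer of $P_w$: any competitor $F$ for $E_\infty$ inside a ball $B(x,r)$ can be approximated by competitors $F_h$ for $E_h$ (the usual sliding-annulus construction) with $F_h\to F$ in $L^1$ and $\limsup_h P_w(F_h,B(x,r))\le P_w(F,B(x,r))$, and one passes to the limit by lower semicontinuity of $P_w$. Hence, if $x_\infty\in\partial^*E_\infty$, by the first part $\partial E_\infty$ is near $x_\infty$ the graph of a $C^{1,\gamma}$ function $f_\infty$; by the improved convergence of quasi-minimizers (convergence of the perimeter measures and local Hausdorff convergence of the boundaries), the cylindrical excess of $E_h$ at $x_\infty$ at a fixed small scale converges to that of $E_\infty$, which is arbitrarily small, so the uniform $\varepsilon$-regularity theorem forces $\partial E_h$, for $h$ large, to be in a fixed cylinder around $x_\infty$ the graph of a function $f_h$ with $\|f_h\|_{C^{1,\gamma}}$ bounded uniformly in $h$. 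Since $E_h\to E_\infty$ in $L^1_{loc}$ gives $f_h\to f_\infty$ in $L^1$, Arzelà--Ascoli upgrades this to $f_h\to f_\infty$ in $C^1$; as graph points are reduced-boundary points, $x_h\in\partial^*E_h$ for $h$ large, and $\nu_{E_h}(x_h)=\big(-\nabla f_h(x_h'),1\big)/\sqrt{1+|\nabla f_h(x_h')|^2}\to\big(-\nabla f_\infty(x_\infty'),1\big)/\sqrt{1+|\nabla f_\infty(x_\infty')|^2}=\nu_{E_\infty}(x_\infty)$.

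The conceptual heart---freezing $w$ at scale $r$ to turn a Hölder-weighted quasi-minimizer into an ordinary almost-minimizer, so that \cite{Tamanini82,Tamanini} can be invoked---is elementary. The genuinely delicate part is the last step: one must check that the density and $\varepsilon$-regularity estimates are \emph{uniform} in $h$, that quasi-minimality is stable under $L^1_{loc}$ limits, and, above all, that the $L^1_{loc}$ (equivalently local Hausdorff) convergence of the sets can be upgraded to genuine $C^1$ convergence of the boundary graphs; without this last point the convergence of the normals is out of reach.
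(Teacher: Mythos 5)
Your proposal is correct and follows essentially the same route as the paper: freeze the H\"older weight at the center of the ball, use the comparison-with-$E\setminus B_r$ density estimate to absorb the error term $r^\beta P(E,B_r)$ into $Cr^{N-1+\beta}$, conclude that $E$ is a $(\Lambda',\gamma)$-almost-minimizer of the unweighted perimeter with $\gamma=\min(\alpha,\beta/2)$, and invoke the Tamanini--Massari regularity theory. The only difference is that you also sketch a proof of the final compactness/convergence statement, which the paper simply delegates to the cited references.
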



The theorem is well-known for quasi-minimizers of the classical perimeter (even with the weaker condition (\ref{chapTV:quasi_min_def_bis})) and follows from \cite{Tamanini} whose work is based on earlier papers of Massari (\cite{Massari74,Massari75,MassariPepe75}). Thus, to get the announced regularity, it is sufficient to prove that a quasi-minimizer of $P_w$ is a quasi-minimizer of the classical perimeter $P$. The argument is based on a key ingredient: the density lemma. The latter result is well-known for problems involving the perimeter.
The density lemma also plays an important role in \cite{JalalzaiStairc} to prove that minimizers of ROF have large flat zones. 
\newline
\begin{lem}[Density estimate]\label{chapTV:density}
Let $w$ be as in assumption $(H10)$, $\alpha\in(0,1)$, $\Lambda\geq0$ and consider $E$ a set of finite perimeter that is a \textit{$(\Lambda,\alpha)$-quasi-minimizer of $P_w$}. Then there exists a radius $r_0>0$ and $C>0$ depending only on $N$ and $w$ such that for any point $x\in\O$,\\
\begin{itemize}
\item[-] if $\forall r>0,\ |E\cap B(x,r)|>0 \text{ then } \forall r<r_0,\ |E\cap B(x,r)|\geq \frac{w_{N}r^{N}}{2^NC_w^{N}},$
\item[-] if $\forall r>0,\ |B(x,r)\setminus E |>0 \text{ then } \forall r<r_0,\ |B(x,r)\setminus E|\geq \frac{w_{N}r^{N}}{2^NC_w^{N}}$.\\
\end{itemize}

\noindent In particular, if $x\in\partial^* E$,
\begin{align*}
\forall r<r_0,\ \min{\left(|E\cap B(x,r)|,|B(x,r)\setminus E|\right)} \geq \frac{w_{N}r^{N}}{2^NC_w^{N}}.
\end{align*}
Moreover, one has for the classical perimeter
\begin{align*}
C^{-1}r^{N-1}\leq P(E,B(x,r))\leq Cr^{N-1}.
\end{align*}

\begin{rmq}
\begin{trivlist}
\item[$(i)$]
The assertion on the perimeter is sometimes referred to as the \textit{Ahlfors regularity} of the boundary $\partial E$. 
\item[$(ii)$] A variant of this lemma holds also for the anisotropic perimeter $P_\Phi$ (see \cite{Caffarelli} for instance).\newline
\end{trivlist}
\end{rmq}
\end{lem}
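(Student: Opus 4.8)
The plan is to prove the density estimate (Lemma \ref{chapTV:density}) by a standard comparison/iteration argument, then use it to upgrade a $(\Lambda,\alpha)$-quasi-minimizer of $P_w$ to a quasi-minimizer of the classical perimeter $P$ and invoke Tamanini's theory, and finally treat the convergence statement by a compactness argument. Since the lemma is the main ingredient, I focus on it. The two bullets are symmetric (replace $E$ by $\O\setminus E$, using that $w$ is unchanged since $P_w$ is automatically reversible here), so it suffices to prove the first. Let $x\in\O$ with $|E\cap B(x,r)|>0$ for all $r>0$ and set $m(r)=|E\cap B(x,r)|$; this is a nondecreasing, absolutely continuous function with $m'(r)=\H^{N-1}(E\cap\partial B(x,r))$ for a.e.\ $r$. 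The comparison competitor is $F=E\setminus B(x,r)$, which differs from $E$ by a compactly contained modification once $B(x,r)\subset\subset\O$. Plugging $F$ into the quasi-minimality inequality \eqref{chapTV:quasi_min_def}, using that the part of $\partial^*F$ added inside $B(x,r)$ is contained in $E\cap\partial B(x,r)$ while the part of $\partial^*E$ inside $B(x,r)$ is removed, and using $C_w^{-1}\le w\le C_w$ to pass between $P_w$ and $P$, one obtains an inequality of the form
\begin{align*}
C_w^{-1}P(E,B(x,r))\le C_w\,\H^{N-1}(E\cap\partial B(x,r))+\Lambda\,m(r)^{1+\frac{2\alpha-1}{N}}.
\end{align*}

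Next I would combine this with the relative isoperimetric inequality applied to $E\cap B(x,r)$: one has $P(E\cap B(x,r),\R^N)\le P(E,B(x,r))+\H^{N-1}(E\cap\partial B(x,r))$, so together with the previous display and the isoperimetric inequality $c_N m(r)^{\frac{N-1}{N}}\le P(E\cap B(x,r),\R^N)$ (valid since $m(r)\le |B(x,r)|$ is smaller than half of $\R^N$ trivially), and absorbing the $\Lambda$ term for $r<r_0$ small (this is where $\alpha>0$, i.e.\ the exponent $1+\frac{2\alpha-1}{N}>\frac{N-1}{N}$, is used), I arrive at a differential inequality
\begin{align*}
c\,m(r)^{\frac{N-1}{N}}\le C_w^2\,m'(r)
\end{align*}
for a.e.\ $r<r_0$. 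Integrating this ODE from $0$ to $r$ (using $m(r)>0$ for all $r>0$ so that the integration is not vacuous, together with $\lim_{r\to0}m(r)=0$) yields $m(r)\ge c'\,r^N$ with the explicit constant $\frac{w_N r^N}{2^N C_w^N}$ after tracking the constants carefully. The statement for $x\in\partial^*E$ follows since then both $|E\cap B(x,r)|>0$ and $|B(x,r)\setminus E|>0$ for all $r$. The Ahlfors upper bound $P(E,B(x,r))\le Cr^{N-1}$ is immediate from quasi-minimality by comparison with $F=E\setminus B(x,r)$ again (bounding $P(E,B(x,r))$ by $\H^{N-1}(\partial B(x,r))+\Lambda|B(x,r)|^{1+\frac{2\alpha-1}{N}}$), and the lower bound $P(E,B(x,r))\ge C^{-1}r^{N-1}$ follows by combining the volume density estimates just proved with the relative isoperimetric inequality.

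With the lemma in hand, the reduction to Tamanini's framework is the following: given the density lower bound, the weight $w$ evaluated along $\partial^*E$ can be frozen at the center of each small ball up to an error controlled by the Hölder modulus, $|w(y)-w(x)|\le [w]_\beta r^\beta$ for $y\in B(x,r)$; substituting this into \eqref{chapTV:quasi_min_def} and using the Ahlfors bound $P(E,B(x,r))\le Cr^{N-1}$ to estimate the error term by $C[w]_\beta r^{N-1+\beta}$ shows that $E$ satisfies the classical quasi-minimality condition \eqref{chapTV:quasi_min_def_bis} with exponent $r^{N-1+2\gamma}$ for $\gamma=\min(\alpha,\beta/2)$. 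Then \cite{Tamanini82,Tamanini} gives $\partial^*E\in C^{1,\gamma}$ and $\H^s(\partial E\setminus\partial^*E)=0$ for $s>N-8$. Finally, the convergence statement follows from the uniform density estimates: uniform lower volume density along $\partial E_h$ forces $x_\infty\in\partial E_\infty$ (points with $B(x_\infty,r)$ eventually entirely inside $E_h$ or entirely outside would contradict the densities), and if $x_\infty\in\partial^*E_\infty$ one uses the uniform Ahlfors bounds together with the $C^{1,\gamma}$ estimates (which are uniform in $h$ since $\Lambda,\alpha,[w]_\beta,C_w$ are fixed) to get $C^{1}$ convergence of the boundaries near $x_\infty$, whence $x_h\in\partial^*E_h$ for $h$ large and normal convergence. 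The main obstacle is the careful bookkeeping of constants in the ODE integration to recover the sharp-looking bound $\frac{w_N r^N}{2^N C_w^N}$, and making sure the absorption of the $\Lambda$-term and the choice of $r_0$ depend only on $N$ and $w$ as claimed; the rest is standard De Giorgi–Tamanini theory applied essentially verbatim.
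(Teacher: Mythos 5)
Your proposal is correct and follows essentially the same route as the paper: comparison with the competitor $E\setminus B(x,r)$, the isoperimetric inequality for $E\cap B(x,r)$, a differential inequality for $m(r)=|E\cap B(x,r)|$ integrated from $0$, the symmetric argument for the complement, the Ahlfors bounds by the same comparison plus the relative isoperimetric inequality, and then the reduction of a $P_w$-quasi-minimizer to a classical quasi-minimizer via the density lemma and Tamanini's theory. The one point where you would not quite recover the stated constant $\frac{w_N r^N}{2^N C_w^N}$ is your crude two-sided bound $C_w^{-1}\le w\le C_w$ when passing from $P_w$ to $P$: the paper instead freezes the weight at the center, writing $w(x)-r^\beta\le w\le w(x)+r^\beta$ on $B(x,r)$ so that the ratio of the two weight bounds tends to $1$ as $r\to0$ and the differential inequality carries the clean factor $2$, which is exactly the ``bookkeeping'' you flagged as the remaining obstacle.
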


In our problem, the key point is that (\ref{chapTV:quasi_min_def}) can be rewritten in terms of the classical perimeter in the following way:
\begin{align*}
P_w(E,B(x,r))=w(x)P(E,B(x,r))+\int_{\partial^*E\cap B(x,r)}(w(y)-w(x))d\H^{N-1}\\
\leq w(x)P(F,B(x,r))+\int_{\partial^*F\cap B(x,r)}(w(y)-w(x))d\H^{N-1}
+\Lambda{|E\Delta F|}^{1+\frac{2\alpha-1}{N}}
\end{align*}
But, since $w$ is $\beta$-Hölder, the quasi-minimality condition becomes
\begin{align*}
(w(x)-{\|w\|}_{C^{0,\beta}} r^\beta)P(E,B(x,r))\leq (w(x)+{\|w\|}_{C^{0,\beta}} r^\beta) P(F,B(x,r))\\+\Lambda{|E\Delta F|}^{1+\frac{2\alpha-1}{N}}.
\end{align*}
To alleviate notations, we are going to assume that $w$ has Hölder norm ${\|w\|}_{C^{0,\beta}}=1$ 
and we are going to write $B_r$ for the ball $B(x,r)$. So for some small radius $r$ with $r^\beta<C_w^{-1}$,
 we are simply left with
\begin{align}\label{chapTV:quasi_min_simple}
{(w(x)-r^\beta)}P(E,B_r)\leq {(w(x)+r^\beta)} P(F,B_r)+\Lambda {|E\Delta F|}^{1+\frac{2\alpha-1}{N}}.
\end{align}
Having this remark in mind, we can now get to the proof:
\newline
\begin{proof}
Let us prove the first item of the lemma. The idea is to compare the energy of $E$ with that of $E\setminus B_r$. 
Let $f(r)=|E\cap B_r|>0$ for all $r>0$. We see that it is a non decreasing function thus differentiable almost everywhere and by the coarea formula one knows that
$f'(r)=\H^{N-1}(E\cap\partial B_r)$ for a.e. $r$.
Now by the isoperimetric inequality,
\begin{align*}
N\omega_N^{\frac{1}{N}}f(r)^{\frac{N-1}{N}}\leq P(E\cap B_r,\R^N)=\H^{N-1}(\partial^*E\cap B_r)+\H^{N-1}(E\cap \partial B_r).
\end{align*}
But by minimality of $E$,
\begin{align*}
\H^{N-1}(\partial^*E\cap B_r)\leq \frac{w(x)+r^\beta}{w(x)-r^\beta} \H^{N-1}(E \cap \partial B_r)+\Lambda \frac{{f(r)}^{1+\frac{2\alpha-1}{N}}}{w(x)-r^\beta}.
\end{align*}
So
\begin{align*}
\left(N\omega_N^{\frac{1}{N}}-\frac{\Lambda {f(r)}^\frac{2\alpha}{N}}{{w(x)-r^\beta}}\right) f(r)^{\frac{N-1}{N}}
&\leq \left(1+\frac{w(x)+r^\beta}{w(x)-r^\beta}\right)f'(r),
\end{align*}
which implies
\begin{align*}
\frac{w(x)-r^\beta}{2}\left(\omega_N^{\frac{1}{N}}-\frac{\Lambda {f(r)}^\frac{2\alpha}{N}}{N(w(x)-r^\beta)}\right)\leq{\left(f(r)^{\frac{1}{N}}\right)}^\prime.
\end{align*}
Now for $\e\in(0,1)$, we can find $r_\e>0$ such that for a.e. $r<r_\e$,
\begin{align*}
\frac{w(x)-{r_\e}^\beta}{2}\left(\omega_N^{\frac{1}{N}}-\frac{\Lambda \e}{N(w(x)-{r_\e}^\beta)}\right)\leq{\left(f(r)^{\frac{1}{N}}\right)}^\prime.
\end{align*}
Integrating between $0$ and $r_\e$ and sending $\e\to0$, one obtains
\begin{align*}
\frac{w(x)^{N}w_N}{2^N}\leq\liminf_{r\to 0} \frac{f(r)}{r^N}
\end{align*}
hence the first assertion of the lemma.\\

Reasoning in a similar way with $f(r)=|B_r \setminus E|$ one proves the second item of the lemma.\\

Let us prove the last statement. As above by comparison of $E$ with $E\setminus B_r$,
\begin{align*}
(w(x)-r^\beta)P(E,B_r)\leq (w(x)+r^\beta)\H^{N-1}(E \cap \partial B_r)+\Lambda {|E\cap B_r|}^{1-\frac{1}{N}}.
\end{align*}
Thus we obtain
\begin{align*}
P(E,B_r)\leq C\left(\Lambda+\frac{C_w+r^\beta}{C_w^{-1}-r^\beta} \right)r^{N-1}
\end{align*}
for some constant $C$ that only depends on $N$.
\newline\newline
In the last statement, the inequality to the left is obtained by applying the relative isoperimetric inequality.
\end{proof}
With this lemma in hands, we are in a position to prove that a quasi-minimizer of the weighted perimeter is simply a quasi-minimizer of the perimeter. Take $E$ a quasi-minimizer of $P_w$ as in Definition \ref{chapTV:quasi_min_def}. Then, using the notations introduced before the proof of the lemma, we have from (\ref{chapTV:quasi_min_simple}) that for any admissible $F$,
\begin{align*}
 P(F,B_r)+\Lambda \frac{{|E\Delta F|}^{1+\frac{2\alpha-1}{N}}}{w(x)+r^\beta}&\geq \frac{w(x)-r^\beta}{w(x)+r^\beta}P(E,B_r)\\
&\geq (1-2C_w r^\beta)P(E,B_r)\\
&\geq P(E,B_r)-2CC_w r^{N-1+\beta},
\end{align*}
where in the last line we used the density lemma.
Then
\begin{align*}
 P(E,B_r)&\leq P(F,B_r)+r^{N-1}({\Lambda C_w}r^{2\alpha}+2CC_w r^\beta)\\
&\leq P(F,B_r)+C'r^{N-1+2\gamma},
\end{align*}
for some positive constant $C'$ and Theorem \ref{chapTV:quasi_min} follows from the regularity of quasi-minimizers for the classical perimeter (see Remark \ref{chapTV:def_Tam} and also \cite{Tamanini82,Tamanini}). \\

Consider now $u$ a minimizer of (\ref{chapTV:ROF_w}) for $g\in L^p(\O)$ with $p>N$ and let $E_t=\{u>t\}$ for some $t\in\R$. Then, if $x\in\O,\ r>0$ and $F$ is a compact modification of $E_t$ in $B(x,r)$ \textit{i.e.}
$F\Delta E_t\subset\subset B(x,r)$, one has
\begin{align*}
P_w(E_t,B(x,r))&\leq P_w(F,B(x,r))+\int_{E_t\Delta F} |t-g|\\
&\leq P_w(F,B(x,r))+|{E_t\Delta F}|^{1-\frac{1}{p}}\|t-g\|_{L^p(B(x,r))}\\
&\leq P_w(F,B(x,r))+\Lambda r^{N\left(1-\frac{1}{p}\right)},
\end{align*}
so the superlevel $E_t$ is a quasi-minimizer of $P_w$ (and satisfies also the weaker definition (\ref{chapTV:quasi_min_def_bis})). Therefore, Theorem \ref{chapTV:quasi_min} applies with $\alpha=\frac{1}{2}(1-\frac{N}{p})$. In the end, we get exactly the same regularity as in Theorem $\ref{chapTV:Regularity_f}$ but this time we also know that the singular set has dimension at most $N-8$.

Note that there is no way to adapt our approach in the anisotropic case since this would contradict the counterexample of Frank Morgan (see the remark that follows Theorem \ref{chapTV:Regularity_f}). Thus, a quasi-minimizer of the anisotropic perimeter is not necessarily a quasi-minimizer of the classical perimeter.

Now if one assumes that $g\in L^\infty(\O)$, we recall that the Nirenberg's method and the regularity theory for elliptic PDEs let us gain a little regularity as can be seen from the following reformulation of Theorem \ref{chapTV:Regularity+_f}:\\
\begin{thm}\label{chapTV:quasi_min_fort}
Let $\O$ be an open set of $\R^N$, $N\geq2$, $w:\O\to\R$ be Lipschitz continuous and such that there exists a positive real $C_w$ with $C_w^{-1}\leq w\leq C_w$. Consider also $E$ a set of finite perimeter that is a quasi-minimizer of the perimeter $P_w$. 
Then $\partial^* E$ is $W^{2,p}$ for all $1<p<\infty$ and thus $C^{1,\gamma}$ for any $\gamma<1$ and 
$$\H^s(\partial E\setminus\partial^*E)=0$$
for every $s>N-8$.\\ 
\end{thm}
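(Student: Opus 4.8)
The statement is the weighted counterpart of Theorem~\ref{chapTV:Regularity+_f}, so I would organise the proof around the two ingredients already at hand: the De~Giorgi--Tamanini regularity of quasi-minimizers of $P_w$ (Theorem~\ref{chapTV:quasi_min}) and the elliptic PDE step behind Theorem~\ref{chapTV:Regularity+_f}. Since $w$ is Lipschitz it is $\beta$-Hölder with $\beta=1$, so Theorem~\ref{chapTV:quasi_min} already gives that $\partial^*E$ is a $C^{1,\gamma}$ hypersurface (for some $\gamma$ up to $\tfrac12$) and, more importantly, that $\H^s(\partial E\setminus\partial^*E)=0$ for every $s>N-8$. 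That disposes of the singular set; what remains is to bootstrap the regularity of $\partial^*E$ from $C^{1,\gamma}$ to $W^{2,p}$ for every $1<p<\infty$, whence $C^{1,\gamma}$ for every $\gamma<1$ by Morrey's embedding (Remark~\ref{chapTV:Morrey}).

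For the bootstrap I would argue locally, around a point $\bar x\in\partial^*E$; here the $C^1$ regularity lets us write $\partial E$, after a rotation, as a graph $x_N=v(x')$ with $v\in C^1(B')$, $B'\subset\R^{N-1}$ a small ball. Comparing $E$ with its compactly supported normal perturbations and using quasi-minimality of $P_w$ — in the instance of interest $E=E_t=\{u>t\}$ for the minimizer $u$ of (\ref{chapTV:ROF_w}) with $g\in L^\infty(\O)$, so that $E_t$ minimizes the prescribed weighted curvature functional (\ref{chapTV:LevelSetPb_w}), and more generally any $(\Lambda,\tfrac12)$-quasi-minimizer has bounded generalized weighted mean curvature — one sees that $v$ is a weak solution of
\begin{align*}
-\diverg_{x'}\!\left(w(x',v)\,\frac{\G_{x'}v}{\sqrt{1+|\G_{x'}v|^2}}\right)=h,
\end{align*}
with $h\in L^\infty(B')$: indeed $h$ collects the forcing ($t-g$, or the quasi-minimality constant) together with the term $\partial_{x_N}w(x',v)\,(1+|\G_{x'}v|^2)^{-1/2}$, which lies in $L^\infty$ precisely because $w$ is Lipschitz and $v,\G_{x'}v$ are bounded on the compact graph. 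This is an equation of the type (\ref{chapTV:MC_w}) with $A(x,t,p)=w(x,t)\,p/\sqrt{1+|p|^2}$ (the $x$-slot being $x'$): $A$ is Lipschitz, being the product of the Lipschitz bounded function $w$ with the smooth bounded map $p\mapsto p/\sqrt{1+|p|^2}$, and it is locally strictly monotone, since $p\mapsto p/\sqrt{1+|p|^2}$ is the gradient of the strictly convex area integrand (uniformly monotone on compacts, as in Remark~\ref{chapTV:rmq_A}) and the factor $w\geq C_w^{-1}>0$ does not destroy this. Lemma~\ref{chapTV:lem_A} then gives $v\in W^{2,2}_{loc}(B')$.

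With $v\in W^{2,2}_{loc}$ I can expand the divergence, exactly as in the proof of Theorem~\ref{chapTV:Regularity+_f}; grouping terms, $v$ solves weakly a second order equation in non-divergence form $\sum_{i,j=1}^{N-1}a_{ij}\,\partial_{x_ix_j}v=\tilde h$, where $a_{ij}=w(x',v)\,\partial_{p_ip_j}\sqrt{1+|p|^2}\,|_{p=\G_{x'}v}$ are continuous (because $v\in C^1$ and $w$ is continuous) and uniformly elliptic (the area integrand is elliptic and $w\geq C_w^{-1}$), while $\tilde h\in L^\infty(B')$ absorbs $h$ together with the first order terms carrying $\G w$. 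Theorem~\ref{chapTV:thm_reg_edp} then yields $v\in W^{2,p}_{loc}(B')$ for all $1<p<\infty$, and Morrey's inequality the $C^{1,\gamma}$ regularity for every $\gamma<1$. I expect the main obstacle to be exactly this interplay: verifying that the weighted mean curvature operator fits the Lipschitz-plus-locally-strictly-monotone hypotheses of Lemma~\ref{chapTV:lem_A}, and then checking that expanding the divergence keeps the equation in non-divergence form with \emph{continuous} leading coefficients and an $L^\infty$ right-hand side — so that the one derivative lost on the merely Lipschitz weight $w$ lands on the forcing, never on the coefficients, and Theorem~\ref{chapTV:thm_reg_edp} still applies. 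The rest is the same bookkeeping as in Theorem~\ref{chapTV:Regularity+_f}.
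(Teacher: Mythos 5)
Your proposal is correct and follows essentially the same route as the paper: the singular-set bound comes from Theorem~\ref{chapTV:quasi_min} (with $\beta=1$), and the bootstrap to $W^{2,p}$ is exactly the paper's combination of the weighted Euler--Lagrange equation in divergence form, Lemma~\ref{chapTV:lem_A} with $A(x,t,p)=w(x,t)\,p/\sqrt{1+|p|^2}$, expansion of the divergence so that the terms carrying $\G w$ land in the $L^\infty$ right-hand side, and Theorem~\ref{chapTV:thm_reg_edp} plus Morrey. If anything, you are more explicit than the paper, which presents this theorem as a reformulation of Theorem~\ref{chapTV:Regularity+_f} and only sketches the Euler--Lagrange step in the remark that follows.
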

\begin{rmq}
As was done before (see (\ref{chapTV:EL_f})), the level set $\partial E_t$ can be locally represented as the graph of a $C^1$ function $v$ that satisfies the following Euler-Lagrange
\begin{align*}
-\diverg_{x'}\left(w(x',v(x'))\frac{\nabla_{x'}v(x')}{\sqrt{1+{|\nabla_{x'}v(x')|}^2}}\right)+{\partial_{x_N}}w(x',v(x'))\sqrt{1+{|\nabla_{x'}v(x')|}^2 }\\
			=\left(t-g(x',v(x'))\right)
\end{align*}
over a ball $B'\subset\R^{N-1}$. We recall that we denoted $x=(x',x_N)\in\R^N$.
It follows that function $v$ solves 
\begin{align*}
-\diverg_{x'}\left(w(\cdot,v)\frac{\nabla_{x'}v}{\sqrt{1+{|\nabla_{x'}v|}^2}}\right)=h
\end{align*}
with $h\in L^\infty(B')$.
In particular when $N=2$, Rademacher's theorem implies that $w(\cdot,v){v'}/{\sqrt{1+|v'|^2}}$ is Lipschitz continuous which in turn implies that $v$ is locally of class $C^{1,1}$.
This provides additional regularity for the weighted total variation in dimension 2.
\end{rmq}

\subsection{Discontinuities for the adaptive total variation minimization problem}
In the weighted case, we can now get a refinement of the jump inclusion result.
\begin{thm}\label{chapTV:thm_jump_w}
Let $w:\O\rightarrow\R$ be positive, bounded, Lipschitz continuous with $\nabla w\in BV(\O,\R^N)$ and $g\in\BV\cap L^\infty(\O)$. Then, denoting $J_{\G w}:=\bigcup_{i=1}^N J_{\partial_{x_i}w}$, the minimizer $u\in\BV$ of (\ref{chapTV:ROF_w}) satisfies 
\begin{align}\label{chapTV:jump_inclusion}
J_u\subset J_g\cup J_{\G w} 
\end{align}
up to a $\H^{N-1}$-negligible set.\\
Moreover, we have the following bound on the mean curvature of the jump set
\begin{align*}
\kappa_{J_u}\in\left[C_w^{-1}(g^--u^-),C_w(g^+-u^+)\right]\  \H^{N-1}\text{-a.e.}
\end{align*}
If in addition we assume that $w$ is of class $C^1$ we get that at the discontinuity
\begin{align}\label{chapTV:cont_decrease}
(u^+-u^-)\leq(g^+-g^-)\  \H^{N-1}\text{-a.e. on } J_u.
\end{align}
\end{thm}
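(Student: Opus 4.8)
The plan is to adapt the contradiction argument of Theorem \ref{chapTV:thm_jump_f} to the weighted setting, tracking the Euler--Lagrange equations more carefully so as to extract both the curvature bound and the reduced-jump inequality. First I would establish the inclusion \eqref{chapTV:jump_inclusion}. Since $\Phi(x,p)=w(x)|p|$ with $w$ Lipschitz and $\nabla w\in BV$, one has $\partial_{x_N}\Phi(x,p)=\partial_{x_N}w(x)\,|p|$ and $\nabla_{p'}\Phi(x,p)=w(x)p'/|p|$; hence the coefficients appearing in the Euler--Lagrange equation of the level sets involve $w$, $\nabla w$ and $D^2w$. Writing $J_u\subset\bigcup_{t_1<t_2,\,t_i\in D}\partial E_{t_1}\cap\partial E_{t_2}$ as before, I would reason by contradiction at a point $\bar x\in\partial^*E_{t_1}\cap\partial^*E_{t_2}$ that lies neither in $S_g$ nor in any $S_{\partial_{x_i}w}$, using Theorem \ref{chapTV:quasi_min_fort} (recalling $g\in L^\infty$) to represent both boundaries as $W^{2,p}$ graphs $v_1\le v_2$ with $v_1(\bar x')=v_2(\bar x')$, $\nabla v_1(\bar x')=\nabla v_2(\bar x')$, $D^2v_1(\bar x')\le D^2v_2(\bar x')$. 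Subtracting the two Euler--Lagrange identities at $\bar x$ and using that all $x$-dependent coefficients agree there (because $\bar x\notin S_g\cup J_{\nabla w}$ and the normals coincide), the only surviving terms are a strictly negative curvature-type term $\propto D^2_{p'}\Phi(\bar x,\nu):(D^2v_1-D^2v_2)(\bar x')\le 0$ against $g^+(\bar x)-g^-(\bar x)$; since $\bar x\notin J_g$ this forces the contradiction exactly as before. The key new bookkeeping point is that $w$ being only Lipschitz with $\nabla w\in BV$ means $\partial_{x_N}w$ is a $BV$ function, so one must argue on the complement of $S_{\nabla w}$, which is where the set $J_{\nabla w}$ enters the statement.

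Next, for the curvature bound I would work directly from the prescribed-curvature problem \eqref{chapTV:LevelSetPb_w}: the level set $\partial E_t$ has weighted mean curvature equal to $g(x)-t$ in the sense of the Euler--Lagrange equation from the Remark after Theorem \ref{chapTV:quasi_min_fort}, i.e. $-\diverg_{x'}\!\big(w(\cdot,v)\nabla_{x'}v/\sqrt{1+|\nabla_{x'}v|^2}\big)=t-g(\cdot,v)$. Dividing by $w$, the (classical, unweighted) mean curvature $\kappa$ of the graph satisfies $w\,\kappa=g-t+(\text{lower order term involving }\nabla w)$; evaluating on the jump set $J_u$ one uses the one-sided limits $u^\pm$ and $g^\pm$ together with $C_w^{-1}\le w\le C_w$ and the sign of the curvature contribution from the two sides to sandwich $\kappa_{J_u}$ between $C_w^{-1}(g^--u^-)$ and $C_w(g^+-u^+)$. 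Here the point is that at a jump point $\bar x\in\partial^*E_{t_1}\cap\partial^*E_{t_2}$ with $t_1=u^-(\bar x)$, $t_2=u^+(\bar x)$, the curvature of $\partial E_{t_1}$ reads $\sim(g^\pm-t_1)/w$ and of $\partial E_{t_2}$ reads $\sim(g^\pm-t_2)/w$, and the ordering $v_1\le v_2$ (hence $D^2v_1\le D^2v_2$) together with the boundedness of $w$ yields the interval.

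Finally, for the reduced-jump inequality \eqref{chapTV:cont_decrease} under $w\in C^1$, I would again compare the two Euler--Lagrange equations at $\bar x$, but now extract a quantitative rather than merely contradictory conclusion. Setting $t_1=u^-(\bar x)$, $t_2=u^+(\bar x)$, subtracting gives
\begin{align*}
w(\bar x)\,\big(D^2v_1(\bar x')-D^2v_2(\bar x')\big):\frac{P^\perp}{|\nu|}=\big(t_2-t_1\big)-\big(g^+(\bar x)-g^-(\bar x)\big)+(\text{terms in }\nabla w\text{ that cancel}),
\end{align*}
where $P^\perp=I-\nu\otimes\nu$ is the projection in the normal direction; since $D^2v_1-D^2v_2\le 0$ and $w(\bar x)>0$ the left side is $\le 0$, giving $t_2-t_1\le g^+(\bar x)-g^-(\bar x)$, which is precisely $u^+-u^-\le g^+-g^-$ at $\H^{N-1}$-a.e. point of $J_u$. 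The main obstacle is making the step ``all $x$-dependent coefficients agree at $\bar x$'' fully rigorous: one must justify discarding the $\nabla w$ and $D^2w$ terms (for the inclusion, by choosing $\bar x\notin S_{\nabla w}$; for \eqref{chapTV:cont_decrease}, using $w\in C^1$ so that $\nabla w$ is genuinely continuous and the first-order terms cancel exactly, while the $D^2w$-type contributions are absorbed into the $W^{2,p}$ framework), and to verify that the one-sided traces $g^\pm$, $u^\pm$ are the correct quantities entering each evaluated equation — this requires care with the slicing/trace properties of $BV$ functions along $\partial^*E_{t_i}$, exactly as in \cite{ChamJump}.
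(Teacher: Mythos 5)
Your overall strategy is the same as the paper's: localize on $\partial E_{t_1}\cap\partial E_{t_2}$, represent both level sets as ordered $W^{2,p}$ graphs $v_1\le v_2$ via Theorems \ref{chapTV:quasi_min} and \ref{chapTV:quasi_min_fort}, and compare the first-variation identities at a contact point using $D^2_{x'}v_1\le D^2_{x'}v_2$. Two remarks on presentation: the paper proves the inclusion \emph{and} the quantitative statements in one direct (non-contradiction) argument, whereas you run a contradiction argument for the inclusion first; the paper's remark after the proof confirms your contradiction route does work for \eqref{chapTV:jump_inclusion} alone. Also, no $D^2w$ terms actually occur in the first variation of $I$ — only $w$ and first derivatives of $w$ appear (via $\partial_{x_N}w(\cdot,v)\sqrt{1+|\G_{x'}v|^2}$ and via $\G_{x'}\bigl(w(\cdot,v_i)\bigr)$ when the divergence is expanded) — which is precisely why $\G w\in BV(\O,\R^N)$ is the right hypothesis.

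The genuine gap is the step you yourself flag as ``the main obstacle'': at $\H^{N-1}$-a.e.\ point of $J_u$ the coefficients $g$ and $\partial_{x_N}w$ genuinely jump across the graph (that is the content of the theorem), so there are no Euler--Lagrange \emph{equations} to subtract, and one must justify why $g^+,\ \partial_{x_N}w^+$ pair with $t_2$ and $g^-,\ \partial_{x_N}w^-$ pair with $t_1$; the wrong pairing would reverse the sign and destroy \eqref{chapTV:cont_decrease}. The paper's resolution is to take \emph{one-sided} variations: perturbing by $\e\vphi$ with $\vphi\ge0$ and $\e>0$ produces, by the slicing properties of $BV$ functions, the upper traces $g(x',v_i(x')+0)$ and $\partial_{x_N}w(x',v_i(x')+0)$ and the inequality
\begin{align*}
w(x',v_i)\,\kappa_i\ \le\ -\G_{x'}f_i\cdot\tfrac{\G_{x'}v_i}{\sqrt{1+|\G_{x'}v_i|^2}}+\partial_{x_N}w(\cdot,v_i+0)\sqrt{1+|\G_{x'}v_i|^2}-\bigl(t_i-g(\cdot,v_i+0)\bigr),
\end{align*}
while $\e<0$ gives the reverse inequality with the lower traces; here $f_i=w(\cdot,v_i)$ and $\G_{x'}f_1=\G_{x'}f_2$ a.e.\ on the contact set. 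Choosing the lower bound for $\kappa_1$, the upper bound for $\kappa_2$, and inserting $\kappa_1\le\kappa_2$ (from $D^2_{x'}v_1\le D^2_{x'}v_2$) yields the chain \eqref{chapTV:calc_born_courbure}, from which the curvature bound, the inclusion, and \eqref{chapTV:cont_decrease} all follow at once. Without this one-sided-variation mechanism your ``subtract the two Euler--Lagrange equations'' step is not defined at the points that matter, so you should supply it explicitly rather than defer it.
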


\begin{rmq}\label{chapTV:spaceBH}
Assumption $\G w\in BV(\O,\R^N)$ means that $w$ lies in the space $BH(\O)$ of bounded Hessian functions that has been thouroughly studied by Demengel in \cite{Demengel84}. It is possible to obtain results that are similar in nature to those known for the $BV$ space. Let us mention in particular that these functions have a $W^{1,1}$ trace, there is also an extension theorem, a Poincaré type inequality and a continuous inclusion $BH(\O)\subset C^0(\bar{\O})$ for Lipschitz domains of $\R^2$ (see also \cite{Savare}).\\
\end{rmq}

Let us make few comments before getting to the proof. First of all, it is interesting to see that whenever the weight is merely Lipschitz continuous it is possible to add discontinuities to the minimizer that were not contained in the datum $g$. To illustrate this point, we give few numerical experiments in dimension one:
\vspace{-0.3cm}
\begin{figure}[H]
\begin{minipage}[c]{.49\linewidth}
\includegraphics[width=\linewidth]{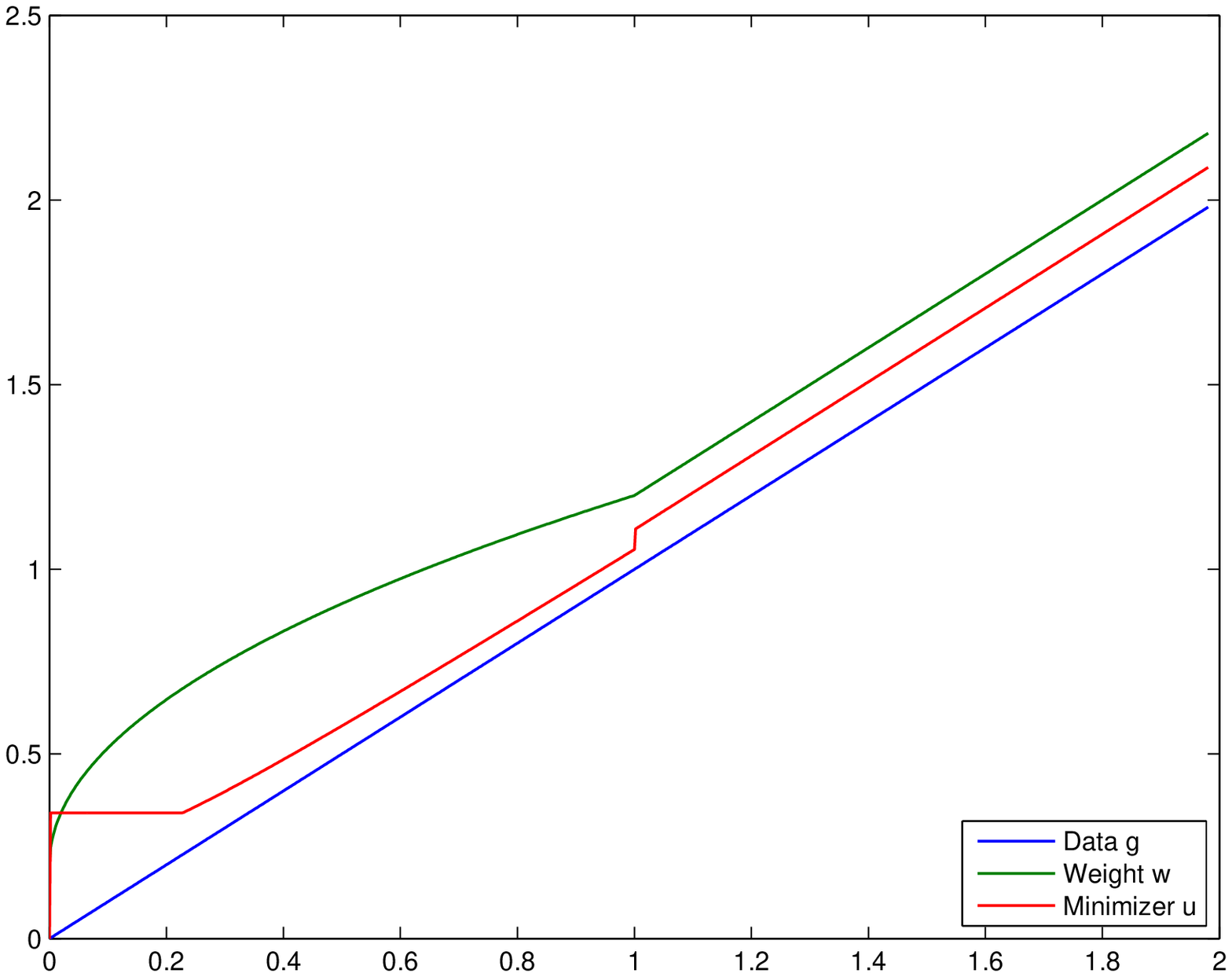}
\vspace{-0.6cm}    
\caption
{Creation of jumps with 
$w(x)=\sqrt{x}\chi_{\{x\leq1\}}+ x\chi_{\{x>1\}}+0.2$}
\end{minipage} 
\hfill
\begin{minipage}[c]{.49\linewidth}
\includegraphics[width=\linewidth]{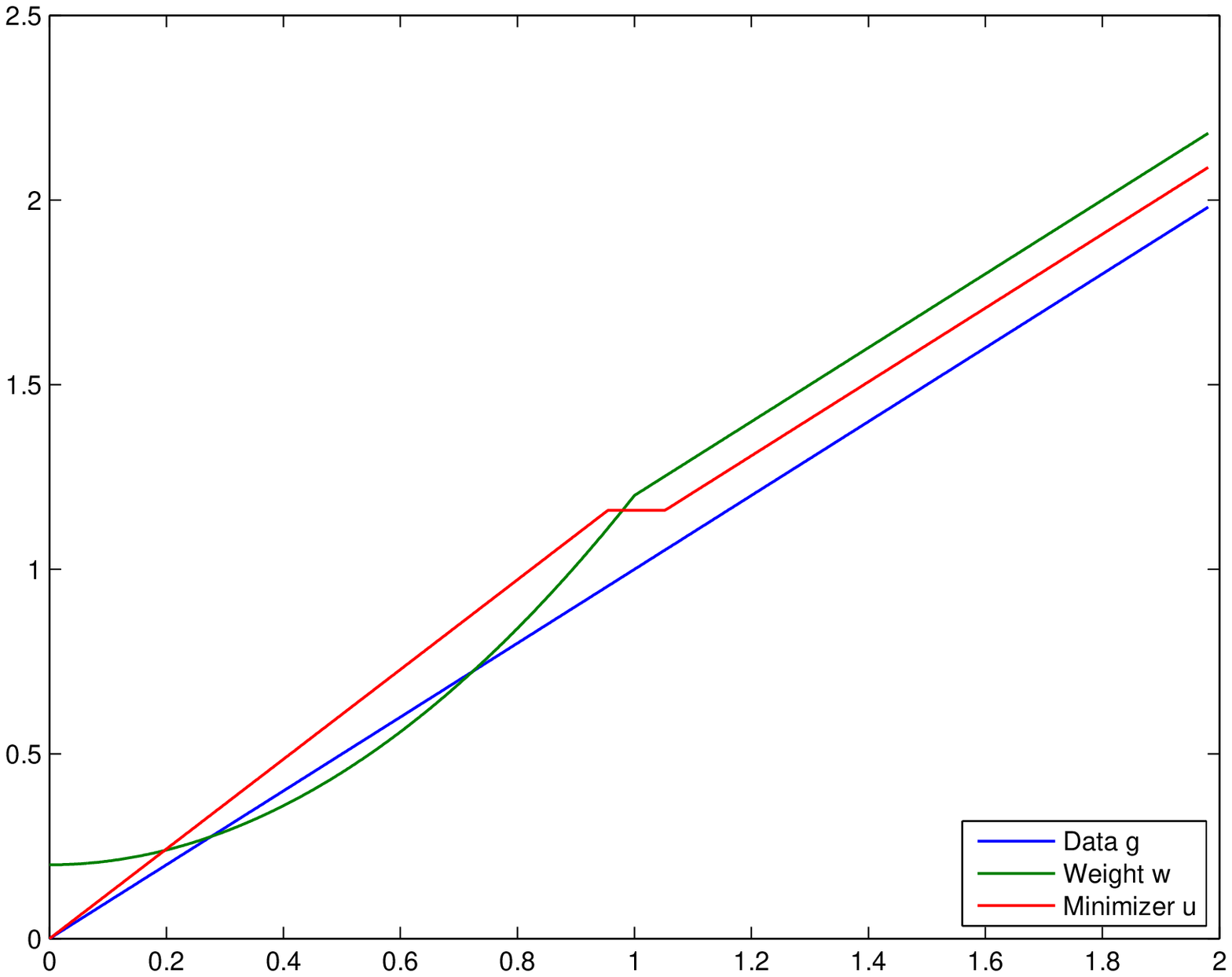}
\vspace{-0.6cm}   
 \caption
{Creation of a flat zone for
$w(x)={x}^2\chi_{\{x\leq1\}}+ x\chi_{\{x>1\}}+0.2$}
\end{minipage}
\end{figure}
\vspace{-0.5cm}
\begin{figure}[H]
\begin{minipage}[c]{.49\linewidth}
\includegraphics[width=\linewidth]{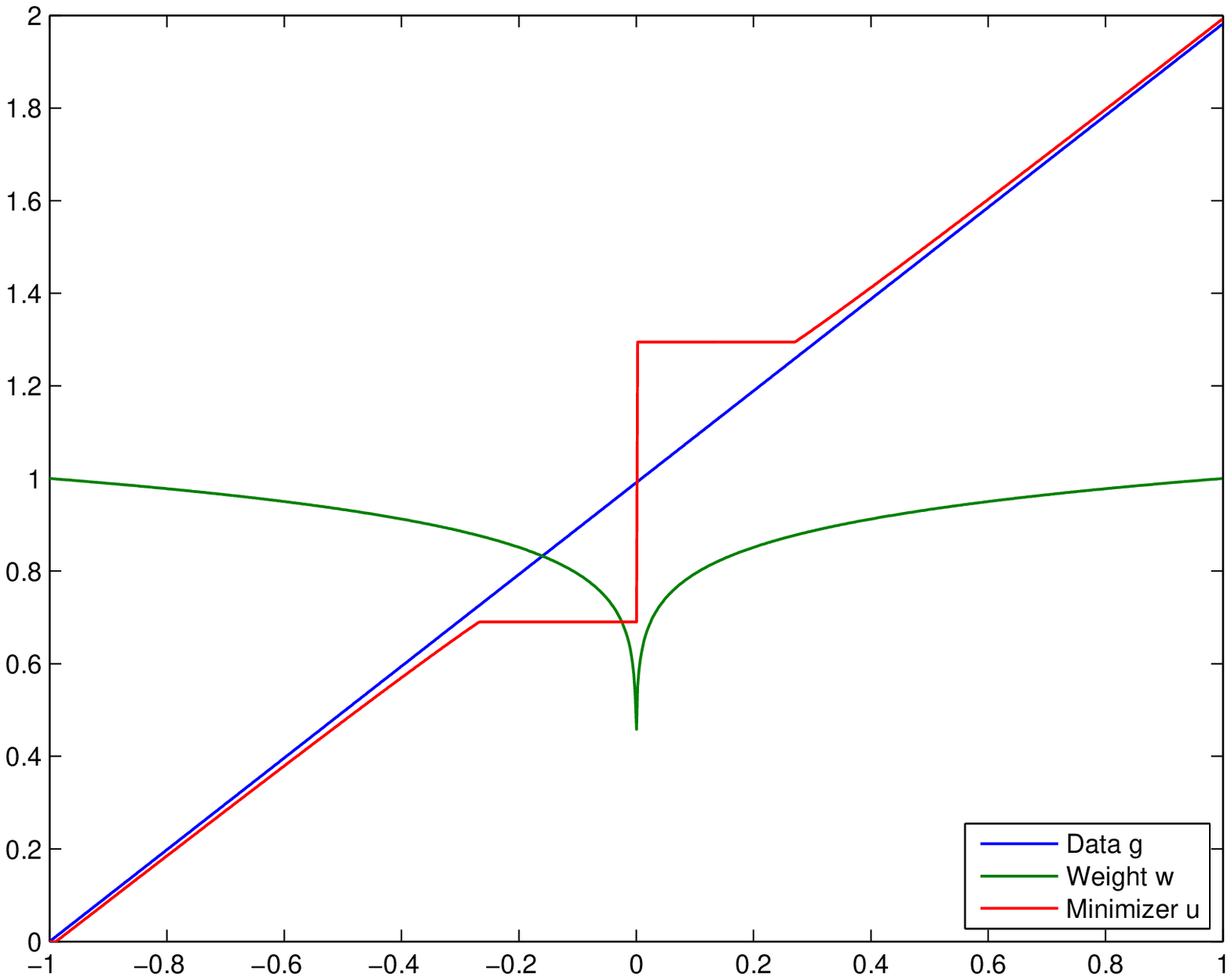}
\vspace{-0.6cm}    
\caption
{Hölder weight $w(x)={|x|}^{1/10}$}
\end{minipage}
\hfill
\begin{minipage}[c]{.49\linewidth}
\includegraphics[width=\linewidth]{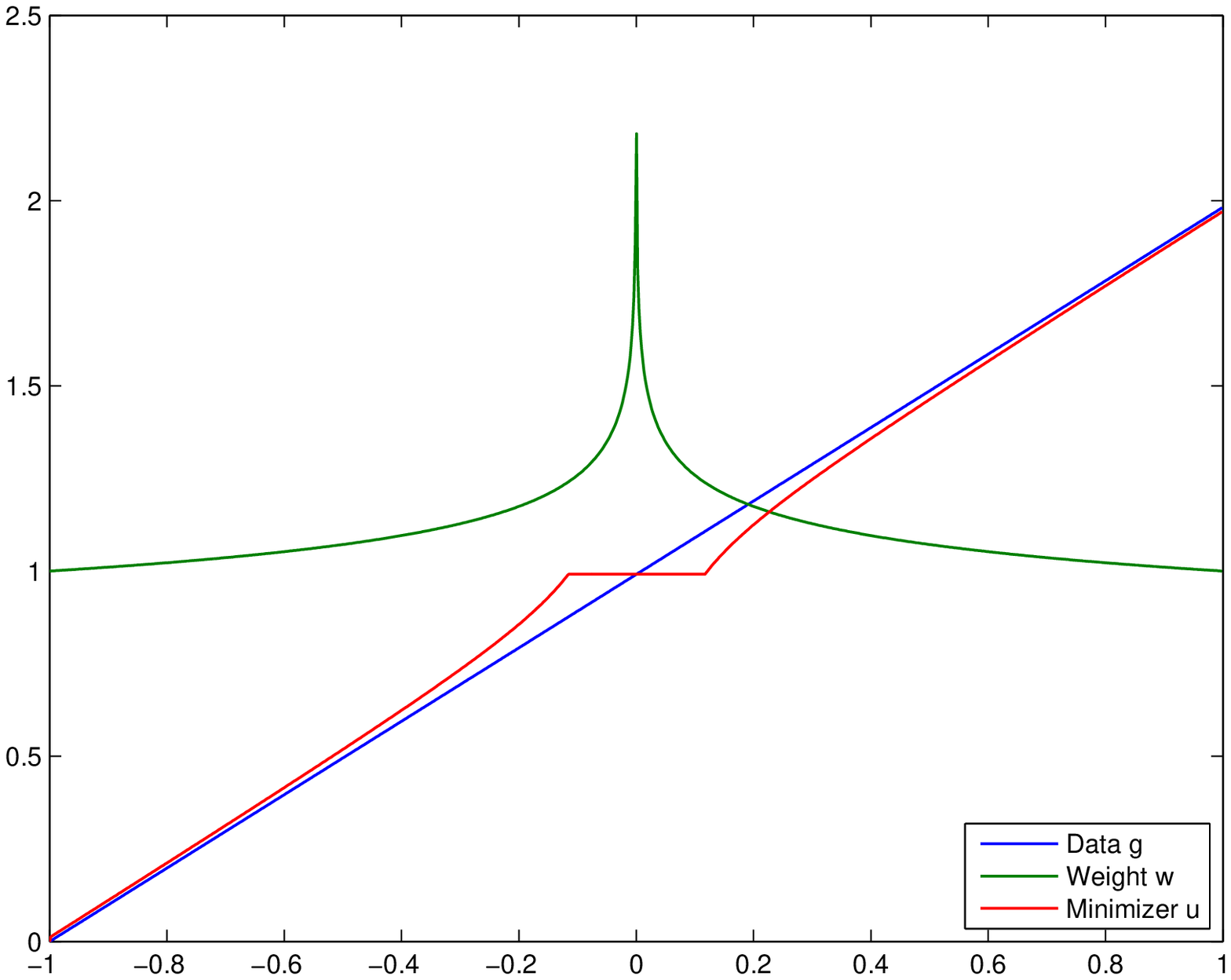}
\vspace{-0.6cm}
\caption
{Weight function $w(x)={|x|}^{-1/10}$}
\end{minipage} 
\end{figure}

These simulations suggest that tweaking $w$ so that $w'$ contains a discontinuity one can force the creation of jumps for some smooth $g$. 

In case the weight function $w$ is of class $C^1$, $J_{\G w}=\emptyset$ which implies $J_u\subset J_g$.

For such a smooth $w$, (\ref{chapTV:cont_decrease}) means that the ``contrast'' (if one thinks of images) decreases at the discontinuity set $J_u$. This is not that surprising for natural images but quite counterintuitive if we consider the following function
$$
\begin{array}[t]{lrcl}
g : & {[0,2\pi )}^2 & \longrightarrow & \R \\
    & (x,y) & \longmapsto & 
\begin{cases} 
2+\cos(x) \text{\ if\ } y>0,\\
0 \text{\ otherwise.}
\end{cases}
\end{array}
$$
\noindent provided periodic boundary conditions.  Let us illustrate this example by a numerical experiment to get a clear idea of what is going on:
we minimize $ROF$ functional with the data function $g$ that is above.
\vspace{-0.5cm}
\begin{figure}[H]   \begin{minipage}[c]{.49\linewidth}
     \includegraphics[width=\linewidth]{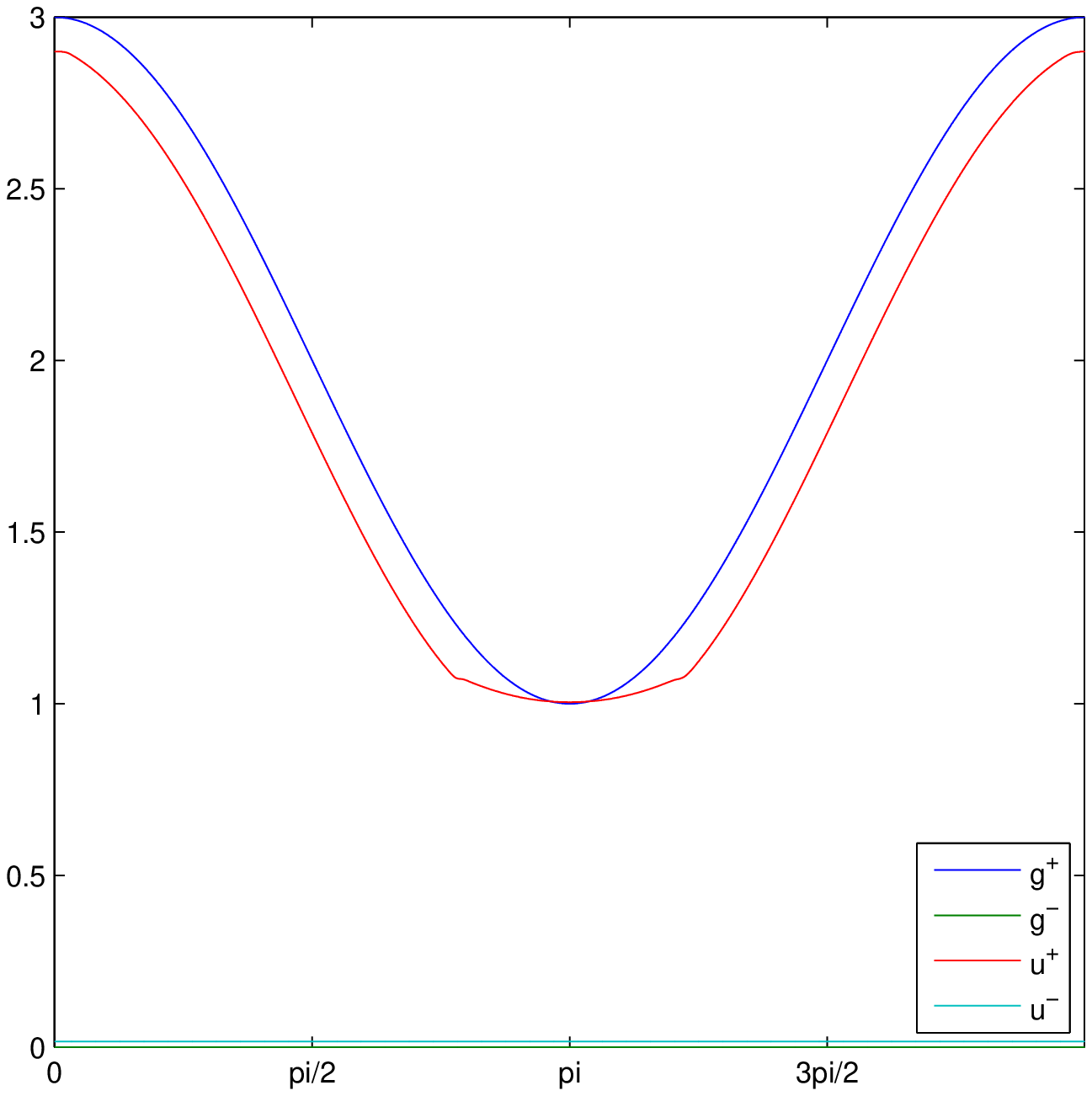}
\vspace{-0.6cm}
\caption
{$u$ and $g$ at the jump set. }
  \end{minipage}
\hfill
  \begin{minipage}[c]{.49\linewidth}
\includegraphics[width=\linewidth]{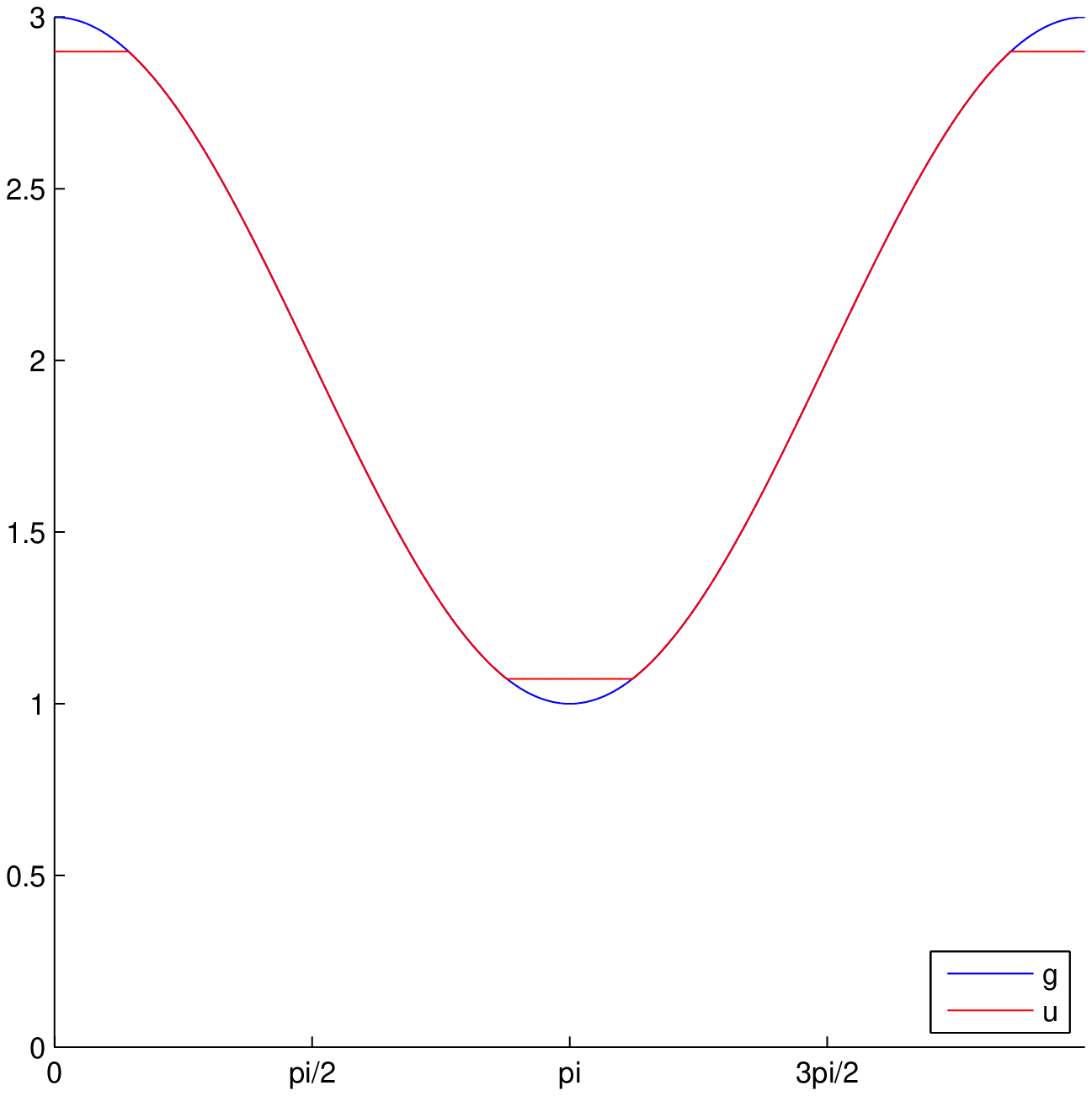}
\vspace{-0.6cm}
\caption
{$u$ and $g$ far from the discontinuity.}
  \end{minipage} 
\end{figure}
\vspace{-0.5cm}
\begin{figure}[H]
  \begin{minipage}[c]{.49\linewidth}
\includegraphics[width=\linewidth]{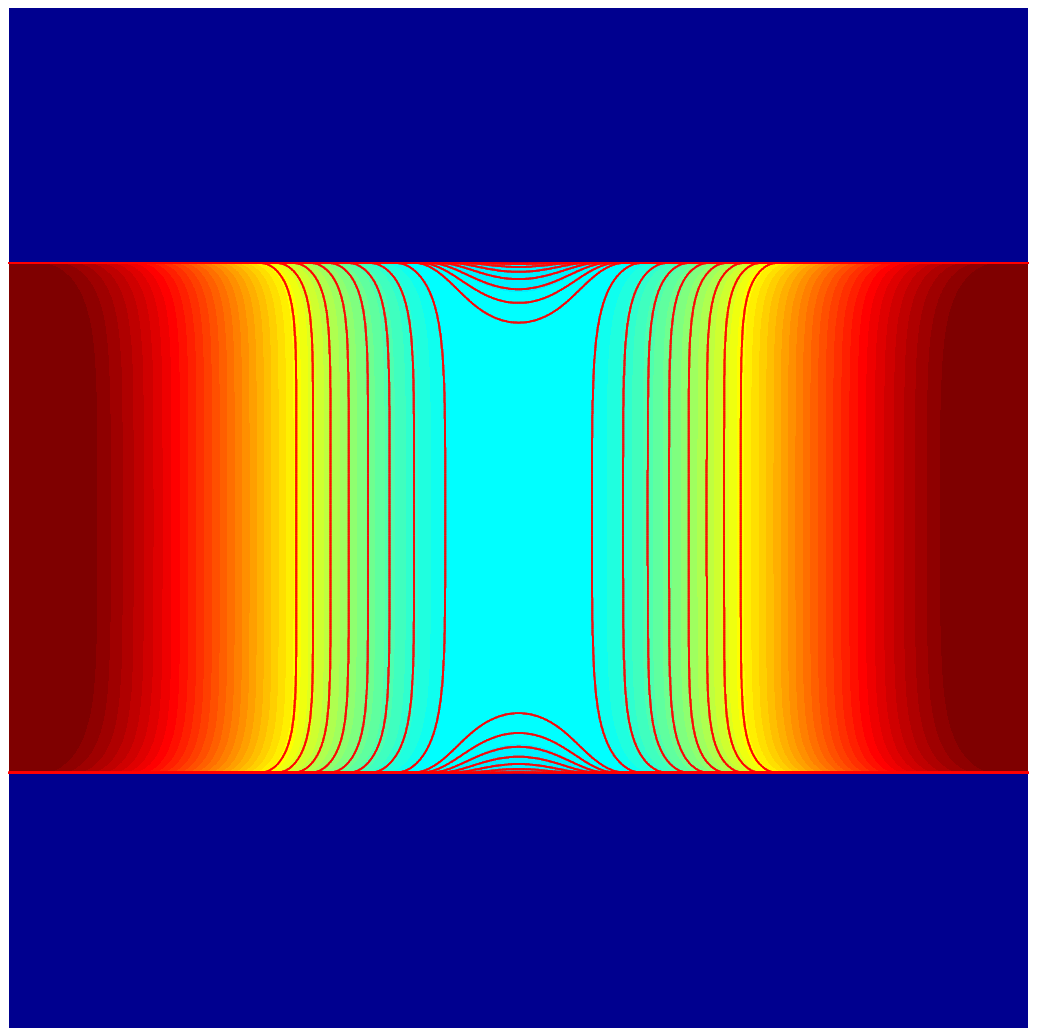}
\vspace{-0.6cm}
\caption
{Level lines $\{u=t\}$ for some values of $t\in(1,2)$. }
  \end{minipage} 
\hfill
  \begin{minipage}[c]{.49\linewidth}
\includegraphics[width=\linewidth]{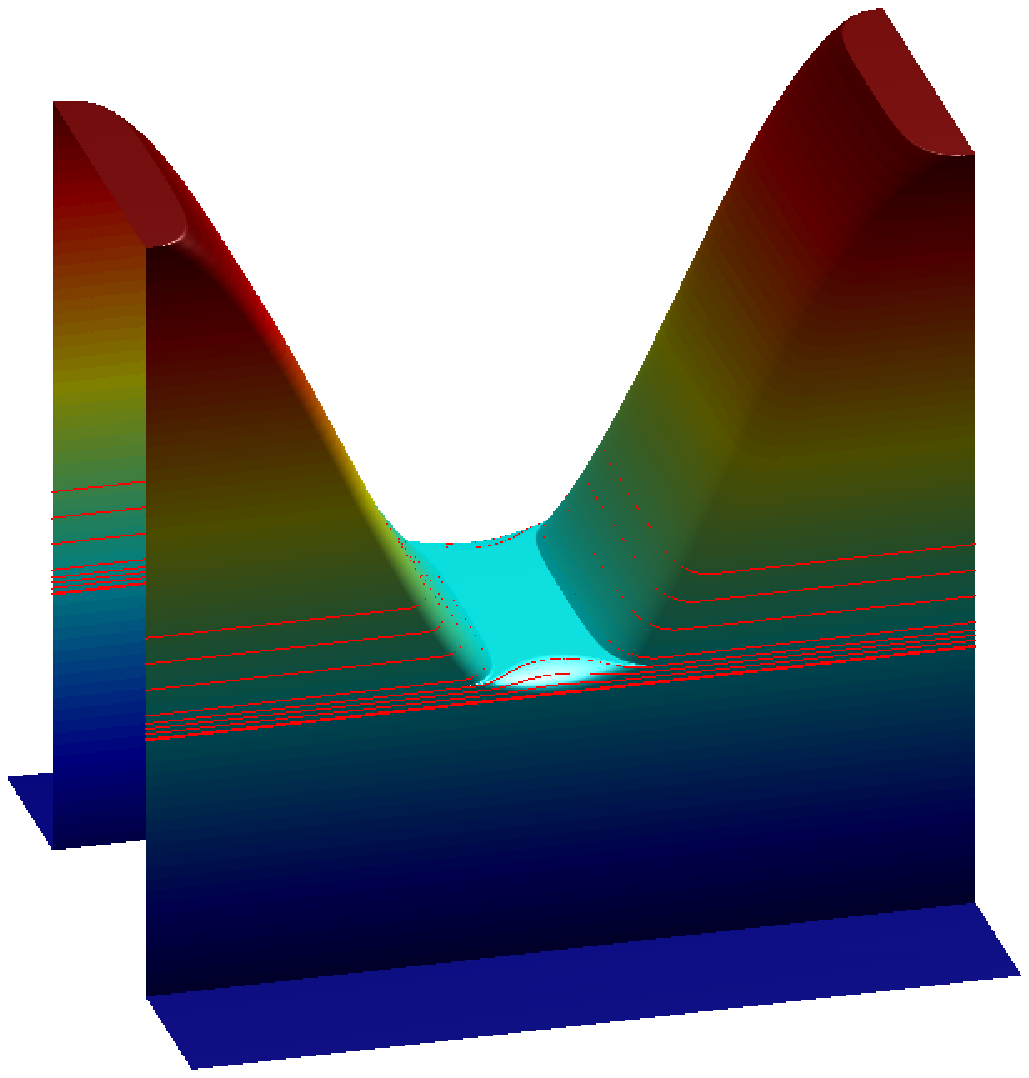}
\vspace{-0.6cm}
\caption
{Graph of $u$ on one period. Some level lines are represented in red.}
  \end{minipage}
 \end{figure}

One can clearly see that little bumps are created near the discontinuities to keep the jump as small as possible. We recall that, far from the jump set, we expect the solution to be constant on large neighborhoods of the extrema and to have a lower infinity norm (see \cite[Thorems 3.1 and 3.3]{JalalzaiStairc}).\\


The proof that follows is slightly different from the one given for Theorem \ref{chapTV:thm_jump_f} since this time we are no longer going to reason by contradiction. This way we can get the desired refinement in the weighted case.\\

\begin{proof}
We recall that up to a $\H^{N-1}$-negligible set
\begin{align*}
J_u\subset\bigcup_{\substack{t_1,t_2\in D\\t_1<t_2}}\partial E_{t_1}\cap \partial E_{t_2}
\end{align*}
for any countable $D$ dense in $\R$, thus it is enough that the result for any $t_1,t_2\in D$ and for $\H^{N-1}$-any $\bar{x}\in\partial E_{t_1}\cap\partial E_{t_2}$.
Combining Theorems \ref{chapTV:quasi_min} and \ref{chapTV:quasi_min_fort} one can assume that both these boundaries can be represented by smooth graphs near $\H^{N-1}$-every $\bar{x}$. That is to say that up to a Euclidian motion there exists a cylinder  $\{x=(x',x_N)\in \R^N\ /\ |x'|<R,\ -R<x_N<R\}$ neighborhood of $\bar{x}$ such that $E_{t_i}$, $i\in\{1,2\}$, coincides with the epigraph of a function  $v_i:B'=B(\bar{x}',R)\to(-R,R)$ of class $W^{2,q}$ for any $q\geq 1$.
We also assume that we have
\begin{align*}
\H^{N-1}\big(\{x'\in B'\ /\ v_1(x')=v_2(x')\}\big)>0
\end{align*}
for the contact set.
Without loss of generality one can finally suppose that $t_2>t_1$ which implies by Lemma \ref{chapTV:NivCroissants} that $v_2\geq v_1$ a.e. on $B'$. Moreover, $\H^{N-1}$-every $x'\in B'$ is a Lebesgue point of functions $v_i$, $\nabla v_i$, $D^2 v_i$, $i\in\{1,2\}$ thus at $\H^{N-1}$-almost every contact point one has
\begin{align}\label{chapTV:inclusion_courb}
v_1({x}')&=v_2({x}'),\notag\\
\nabla_{x'} v_1({x}')&=\nabla_{x'} v_2({x}'),\notag\\
D^2_{x'}v_1({x}')&\leq D^2_{x'}v_2({x}').
\end{align}
Recall that Proposition \ref{chapTV:LevelSetPb_f_thm} tells us that the superlevels $E_{t_i}$ with $i\in\{1,2\}$ solve
\begin{align*}
\min_E \int_{\partial^*E}w(x)d\H^{N-1}(x)+\int_E (t_i-g(x))dx
\end{align*}
 where the minimization is carried out on all sets of finite perimeter in $\O$. Doing compact modifications in the ball $B'$ one immediately sees that $v_i$, $i\in\{1,2\}$ minimizes
\begin{align}\label{chapTV:deriv_I}
I(v)=\int_{B'}w(x',v_i(x'))\sqrt{1+{|\G_{x'} v_i(x')|}^2}dx'+\int_{B'}\int_{v(x')}^R t_i-g(x',x_N)dx_Ndx'.
\end{align}
This means that for any perturbation $\vphi\in C^\infty_c(B')$ such that $\vphi\geq0$
\begin{align*}
{I'(v_i)}^+\cdot\vphi=\lim_{\substack{\e\to 0\\ \e>0}} \frac{I(v_i+\e\vphi)-I(v_i)}{\e}\geq 0
\end{align*}
whereas we know by the slicing properties of $BV$ functions (see in particular \cite[Remark 3.109]{Ambrosio00}) that for a.e. $x'\in B'$ 
\begin{align*}
\frac{1}{\e}\int_{v_i(x')}^{v_i(x')+\e\vphi(x')} g(x',x_N)dx_N\underset{\e\to 0} \to g(x',v_i(x')+0),\\
\frac{w(x',v_i(x')+\e\vphi(x'))-w(x',v_i(x'))}{\e}\underset{\e\to 0}\to\partial_{x_N}w(x',v_i(x')+0).\\
\end{align*}
Thus, we find that, for any $\vphi\in C^\infty_c(B')$ such that $\vphi\geq 0$,
\begin{align}
{I'(v_i)}^+\cdot\vphi=\int_{B'}w(x',v_i(x'))\bigg({\frac{\G_{x'} v_i(x')}{\sqrt{1+|\G_{x'} v_i(x')|^2}}}\bigg)\cdot\G_{x'}\vphi(x')dx'\notag\\
+\int_{B'}(\partial_{x_N}w(x',v_i(x')+0)\sqrt{1+|\G_{x'} v_i(x')|^2}-(t_i-g(x',v_i(x')+0)))\vphi(x')dx'.
\end{align}

Our aim is now to integrate by parts in the first integral that we shall denote $\tilde{I}(v_i)$. For this purpose, let us also denote $f_i(x')=w(x',v_i(x'))$. It is readily checked that $f_i\in \Lip(B')\subset H^1(B')$.
Therefore, $v_i$ being regular
\begin{align}\label{chapTV:calcul_EL_w}
\tilde{I}(v_i)&=\int_{B'} f_i\bigg({\frac{\G_{x'} v_i}{\sqrt{1+|\G_{x'} v_i|^2}}}\bigg)\G_{x'}\vphi\notag\\
&=-\int_{B'}\diverg_{x'}\left(f_i\bigg({\frac{\G_{x'} v_i}{\sqrt{1+|\G_{x'} v_i|^2}}}\bigg)\right)\vphi\notag\\
&=-\int_{B'}\G_{x'} f_i \bigg({\frac{\G_{x'} v_i}{\sqrt{1+|\G_{x'} v_i|^2}}}\bigg)\vphi-\int_{B'} w(\cdot,v_i)\kappa_i\vphi
\end{align}
where we denoted $\kappa_i(x')=\diverg_{x'} \bigg({\frac{\G_{x'} v_i(x')}{\sqrt{1+|\G_{x'} v_i(x')|^2}}}\bigg)$ the mean curvature of the level set $\partial^* E_{t_i}$ at $({x}',v_i(x'))$.

Note that $\H^{N-1}$-a.e. point in $B'$ is a Lebesgue point for $\G_{x'}f_i$ so $v_i$ satisfies
\begin{align}\label{chapTV:ineq1}
-\G_{x'}f_i(x')\cdot\frac{\G_{x'} v_i(x')}{\sqrt{1+|\G_{x'} v_i(x')|^2}}
-w(x',v_i(x'))\kappa_i(x')\notag\\
+\partial_{x_N}w(x',v_i(x')+0)\sqrt{1+|\G_{{x}'} v_i(x')|^2}
-\big(t_i-g(x',v_i(x')+0)\big)\geq 0.
\end{align}
If one chooses $\e<0$ in (\ref{chapTV:deriv_I}) then one obtains in the same way 
\begin{align}\label{chapTV:ineq2}
-\G_{x'}f_i(x')\cdot\frac{\G_{x'} v_i(x')}{\sqrt{1+|\G_{x'} v_i(x')|^2}}
-w(x',v_i(x'))\kappa_i(x')\notag\\
+\partial_{x_N}w(x',v_i(x')-0)\sqrt{1+|\G_{{x}'} v_i(x')|^2}
-\big(t_i-g(x',v_i(x')-0)\big)\leq 0.
\end{align}


These identities hold for a.e. ${x}'\in B'$. Since we assumed that the contact set $\{v_1=v_2\}$ has positive $\H^{N-1}$-measure, then we can find a contact point $x'\in B'$ that satisfies the inequalities (\ref{chapTV:ineq1}) and (\ref{chapTV:ineq2}), identities (\ref{chapTV:inclusion_courb}) and such that $\G_{x'}f_1(x')=\G_{x'} f_2(x')$ (indeed $f_1=f_2$ on the contact set) which implies that
\begin{align}\label{chapTV:calc_born_courbure}
\partial_{x_N}w(x',x_N-0)\sqrt{1+|\G_{{x}'} v_1(x')|^2}-(t_1-g(x',x_N-0))
\leq w(x)\kappa_1(x')\notag\\
\leq w(x)\kappa_2(x')
\leq\partial_{x_N}w(x',x_N+0)\sqrt{1+|\G_{{x}'} v_2(x')|^2}-(t_2-g(x',x_N+0)).
\end{align}
It follows that for $\H^{N-1}$-every $x'\in B'$
\begin{align*}
0<t_2-t_1\leq\big(\partial_{x_N}w(x',x_N+0)-\partial_{x_N}w(x',x_N-0)\big)(1+\eta)\\
+\big(g(x',x_N+0)-g(x',x_N-0)\big)
\end{align*}
with $\eta$ that can be chosen as small as one wishes by taking a smaller ball $B'$.
Thus $\partial_{x_N}w$ or $g$ jumps at $\bar{x}$ hence (\ref{chapTV:jump_inclusion}).
Moreover, from the previous inequality one has for the value of the jump
\begin{align}\label{chapTV:calc_jump_decrease}
(u^+-u^-)(x)\leq(\partial_{x_N}w^+-\partial_{x_N}w^-)(x)+(g^+-g^-)(x)
\end{align}
which furnishes (\ref{chapTV:cont_decrease}).
The claim on the mean curvature follows at once from (\ref{chapTV:calc_born_courbure}). 
\end{proof}
\newpage
\begin{rmq}
\begin{trivlist}
\item[$(i)$]Assume that the discontinuity of $\partial_{x_N} w$ occurs in the opposite direction of that of $g$, namely
\begin{align*}
(\partial_{x_N}w^+-\partial_{x_N}w^-)(x)+(g^+-g^-)(x)=0.
\end{align*}
Then one can simply erase the jump of $g$: indeed from (\ref{chapTV:calc_jump_decrease}) one sees that $u$ has no discontinuity at $x$.
\item[$(ii)$]Note that if one is merely interested in the jump inclusion (\ref{chapTV:jump_inclusion}), it can be obtained by copying and pasting the proof given in the anisotropic setting: indeed reasoning by contradiction one can assume that $\H^{N-1}\big(J_u\setminus (J_g\cup J_{\G w})\big)>0$ and the rest follows.
\end{trivlist}
\end{rmq}


\section{An open question}
In this section, we assume that the weight is constant \textit{i.e.} $w=\la>0$ which corresponds to ROF's model. 
In this case, a natural and interesting question is to understand whether given two regularization parameters $\la,\ \mu>0$, $J_{u_\mu}\subset J_{u_\la}$. 
From Theorem \ref{chapTV:thm_jump_f} and Theorem \ref{chapTV:thm_jump_w}, one can get a similar inclusion principle for the discontinuity set of the solution of the TV flow (see \cite{ChamJump,JalalzaiFlow}). More precisely, if the initial datum $g\in L^N(\O)$ and $t>t'>0$ then $J_{u(t)}\subset J_{u(t')}$. If in addition $g\in \BV\cap L^N(\O)$ and $t>t'\geq0$, then $J_{u(t)}\subset J_{u(t')}\subset J_g$. Using the latter result, one can solve our question in the 1D case \cite{Briani}, in the radial case \cite{JalalzaiStairc,JalalzaiPhD} and also when $g=\chi_C$ the characteristic of a convex set \cite{Alter,Andreu}.\\

We are going to state some partial results that suggest that this inclusion principle holds for ROF's model in full generality. Before getting further we need the following lemma:

%
\begin{lem}\label{chapTV:jump_lambda}
Consider an open set $\O$ with finite Lebesgue measure, let $g\in L^\infty(\O)$ and consider respectively two minimizers $u_\la, u_{\mu}$ of (\ref{chapTV:ROF}) corresponding to the regularization parameters $\la,\ \mu>0$
then
\begin{align*}
{\|u_\la-u_{\mu}\|}_\infty\leq\frac{2|\O|{\|g\|}_{\infty}}{\min{(\la,\mu)}}|\la-\mu|.
\end{align*}
\end{lem}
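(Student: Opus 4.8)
The plan is to prove the pointwise bound $|u_\lambda(x)-u_\mu(x)|\le c$ for a.e.\ $x\in\O$ with $c=\dfrac{2{\|g\|}_\infty}{\max(\lambda,\mu)}\,|\lambda-\mu|$, which is an estimate of the announced form. Assume without loss of generality that $0<\mu<\lambda$, and write $M:={\|g\|}_\infty$, $c:=\dfrac{2M(\lambda-\mu)}{\lambda}$ (note $c<2M$). The two ingredients are an a priori $L^\infty$ bound on the minimizers and the comparison principle for the level-set formulations (Lemma~\ref{chapTV:lem_comparaison} applied through Theorem~\ref{chapTV:LevelSetPb}).

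\emph{Step 1 (maximum principle).} First I would show ${\|u_\nu\|}_\infty\le M$ for $\nu\in\{\lambda,\mu\}$. Truncating $u_\nu$ at the levels $\pm M$ does not increase $\int_\O|Du_\nu|$ --- by the coarea formula (Theorem~\ref{chapTV:CoareaAFP}) it only discards the perimeters of the level sets $\{u_\nu>t\}$ for $|t|>M$ --- and it does not increase $\tfrac12{\|\cdot-g\|}_2^2$ either, since the pointwise projection onto $[-M,M]$ is $1$-Lipschitz and $|g|\le M$ a.e.; by uniqueness of the minimizer the truncation equals $u_\nu$. In particular $\{u_\nu>t\}$ is negligible for $t\ge M$ and coincides with $\O$ up to a negligible set for $t<-M$.

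\emph{Step 2 (ordered inclusions of the level sets).} I would then establish that, for every $t\in\R$,
\begin{align*}
\{u_\mu>t\}\subseteq\{u_\lambda>t-c\}\qquad\text{and}\qquad\{u_\lambda>t\}\subseteq\{u_\mu>t-c\}
\end{align*}
up to negligible sets. Fix $t$. By Theorem~\ref{chapTV:LevelSetPb}, $\{u_\mu>t\}$ minimizes $E\mapsto P(E,\O)-\int_E\frac{g-t}{\mu}\,dx$ and $\{u_\lambda>t-c\}$ minimizes $E\mapsto P(E,\O)-\int_E\frac{g-t+c}{\lambda}\,dx$ among sets of finite perimeter. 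By Step~1 the first inclusion is trivial unless $t\in(c-M,M)$; for such $t$ one has $g(x)-t<2M-c$ for a.e.\ $x$, hence
\begin{align*}
\frac{g(x)-t}{\mu}-\frac{g(x)-t+c}{\lambda}=\frac{(\lambda-\mu)\bigl(g(x)-t\bigr)-\mu c}{\lambda\mu}<\frac{(\lambda-\mu)(2M-c)-\mu c}{\lambda\mu}=0
\end{align*}
by the choice of $c$, so Lemma~\ref{chapTV:lem_comparaison} yields $\{u_\mu>t\}\subseteq\{u_\lambda>t-c\}$. The second inclusion is obtained by the same computation, with $\{u_\lambda>t\}$ now playing the role of the smaller set.

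\emph{Step 3 (conclusion).} Applying Step~2 along a countable dense family of levels and using that the superlevel sets determine $u_\lambda$ and $u_\mu$ a.e., one gets $|u_\lambda-u_\mu|\le c$ a.e., i.e.\ ${\|u_\lambda-u_\mu\|}_\infty\le\dfrac{2{\|g\|}_\infty}{\max(\lambda,\mu)}|\lambda-\mu|$, which gives the stated estimate. The delicate point is Step~1: it is precisely the a priori $L^\infty$ control that makes the set of levels $t$ to be handled bounded, so that the comparison lemma can be invoked; the passage from the ``up to a negligible set'' inclusions to the a.e.\ pointwise bound is routine once one argues along a countable dense set of levels. (Alternatively, the same estimate follows from the resolvent identity $u_\lambda=(I+\mu\,\partial TV)^{-1}\bigl(\tfrac\mu\lambda g+(1-\tfrac\mu\lambda)u_\lambda\bigr)$ together with the $L^\infty$-nonexpansiveness of the proximal map of $TV$, itself a consequence of its order-preservation and its commutation with the addition of constants.)
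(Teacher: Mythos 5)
Your argument is correct, but it follows a genuinely different route from the paper's. The paper works at the level of the Euler--Lagrange equations $-\nu\diverg z_\nu+u_\nu=g$: it combines the two equations, tests against ${(u_\lambda-u_\mu)}^{p-1}$, uses the monotonicity of $\partial TV$ to discard the divergence term, and sends $p\to\infty$; the factor $|\O|$ in the stated constant is exactly the $|\O|^{1/q}$ surviving from the $L^q$--$L^\infty$ H\"older bound as $q\to1$. You instead combine a truncation maximum principle with the comparison Lemma~\ref{chapTV:lem_comparaison} applied to the level-set problems of Theorem~\ref{chapTV:LevelSetPb}; your computation checks out (with $c=\tfrac{2M(\lambda-\mu)}{\lambda}$ one has $(\lambda-\mu)(2M-c)=\mu c$ for the first inclusion and $(\lambda-\mu)(t-g)<2M(\lambda-\mu)=\lambda c$ for the second), and the passage from the a.e.\ inclusions along a countable dense set of levels to the pointwise bound is indeed routine. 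What your approach buys is a cleaner, volume-free constant, ${\|u_\lambda-u_\mu\|}_\infty\le\frac{2{\|g\|}_\infty}{\max(\lambda,\mu)}|\lambda-\mu|$, which is also what the resolvent identity you mention at the end yields via the $L^\infty$-nonexpansiveness of the TV proximal map (itself a consequence of the same comparison principle). One caveat: your constant is bounded by the stated one only when $|\O|\ge\min(\lambda,\mu)/\max(\lambda,\mu)$ --- in particular whenever $|\O|\ge1$ --- so for domains of small measure your (true, and arguably sharper) inequality does not literally imply the one in the Lemma; you should either state your bound as the conclusion or note explicitly why it suffices for the intended application.
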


\begin{proof}
Without loss of generality, one can assume that $\mu>\la$. The minimizers $u_\la$ and $u_{\mu}$ satisfy the Euler-Lagrange equation for ROF \textit{i.e.} there exist $z_\la,z_{\mu}\in L^\infty(\O,\R^N)$ such that
\begin{align*}
\begin{cases}
-\la\diverg z_\la+u_\la=g,\\
-\mu\diverg z_{\mu}+u_{\mu}=g.
\end{cases}
\end{align*}
Multiplying the first equation by $\mu/\la$, the second by $-1$ and summing the resulting identities we get
\begin{align*}
\langle-\mu\diverg(z_\la-z_{\mu})+u_\la-u_{\mu},\vphi\rangle=\left(\frac{\mu}{\la}-1\right)\langle(g-u_\la),\vphi\rangle.
\end{align*}
for any test function $\vphi\in L^2(\O)$.
If for some \textit{even} integer $p\geq2$ we set $\vphi={(u_\la-u_{\mu})}^{p-1}$ and denote $q=p/(p-1)$ the adjoint of $p$, it follows
\begin{align*}
\mu\langle(z_\la-z_{\mu}),(p-1){(u_\la-u_{\mu})}^{p-2} D(u_\la-u_{\mu})\rangle+{\|u_\la-u_{\mu}\|}_p^p\\
\leq\left(\frac{\mu}{\la}-1\right){\|g-u_\la\|}_q{\|u_\la-u_{\mu}\|}_p^{p-1}.
\end{align*}
Though, the first term on the left side of the inequality is non-negative since $\partial TV$  is a monotone operator (see \cite{Brezis73}) so we are simply left with
\begin{align}\label{chapTV:der_u_la}
{\|u_\la-u_{\mu}\|}_p\leq\frac{\mu-\la}{\la}{\|g-u_\la\|}_q
\end{align}
which implies
\begin{align}\label{chapTV:der_u_la_2}
{\|u_\la-u_{\mu}\|}_p&\leq|\O|^{\frac{1}{q}}\frac{\mu-\la}{\la}{\|g-u_\la\|}_\infty\notag\\
&\leq2|\O|^{\frac{1}{q}}{\|g\|}_\infty\frac{\mu-\la}{\la}
\end{align}
which yields the result making $p\to+\infty$.
\end{proof}
\begin{rmq}
\begin{trivlist}
\item [$(i)$]Equation (\ref{chapTV:der_u_la}) in conjunction with (\ref{chapTV:ineg_g_BV}) implies that for $g\in\BV$,
\begin{align*}
{\|u_\la-u_{\mu}\|}_2\leq\frac{|\mu-\la|}{\min(\la,\mu)}{\|g-u_\la\|}_2\leq\sqrt{2}\frac{\mu-\la}{\sqrt{\min(\la,{\mu})}}\left(\int_\O|Dg|\right)^{\frac{1}{2}},
\end{align*}
where the rightmost bound does not depend on $|\O|$ hence we can relax the assumption on $\O$.
\item [$(ii)$] Inequality (\ref{chapTV:der_u_la_2}) suggests some differentiability property for the mapping 
\begin{align*}
\begin{cases}
\R^+&\to L^p(\O)\\
\la&\mapsto u(\la)
\end{cases}
\end{align*}
defined for $p\in[2,+\infty]$. Unfortunately Rademacher's theorem fails in the infinite dimensional setting. Nonetheless, in our problem we can actually get Fréchet-differentiability almost everywhere from \cite[Corollary 5.21]{Benyamini} whenever the destination space has the so-called Radon-Nikodym Property (RNP). A space satisfies the RNP whenever it is a separable dual Banach space or a reflexive space (see \cite[Corollary 5.12]{Benyamini} ). This is indeed true for any $L^p(\O)$ space with $p<\infty$ but not for $L^\infty(\O)$ and furnishes the differentiability for the $\|\cdot\|_p$ norm only.

Note that in general it is not trivial to get Fréchet-differentiability for a generic mapping with values in a space of infinite dimension. The only positive answer in this direction states that every real-valued Lipschitz function on an Asplund space has points of Fréchet differentiability (see \cite{Preiss90} but also \cite{Bogachev96} for counterexamples). In general, the result does not even hold after convolution (see for instance \cite{Bogachev07}).
Though the situation for Gâteaux is more favorable:
the idea is that every Lipschitz map from a separable Banach
space into a space with the RNP is Gâteaux differentiable almost everywhere in the sense of Aronszajn (\cite[Proposition 6.41 and Theorem 6.42]{Benyamini} ).\\
\end{trivlist}
\end{rmq}
Lemma \ref{chapTV:jump_lambda} helps us prove the following result that says essentially that the highest jumps form a decreasing sequence with respect to the regularization parameter $\la$:\\
\begin{prop}
Let an open domain $\O\subset\R^N$ of finite Lebesgue measure, $g\in L^\infty(\O)$ non identically zero and $\la,\mu$ positive such that for some real $\e>0$
\begin{align*}
|\mu-\la|\leq\frac{\e\min{(\la,\mu)}}{2|\O|{\|g\|}_\infty}.
\end{align*}
Let also $u_\la$ and $u_{\mu}$ be two minimizers of (\ref{chapTV:ROF}).
Then if  we denote
\begin{align*}
J_{u_\la}^\e:=\{x\in J_{u_\la}\ /\ (u_\la^+-u_\la^-)(x)>\e\},
\end{align*}
one has
\begin{align*}
J_{u_\la}^\e\subset J_{u_{\mu}}
\end{align*}
up to a $\H^{N-1}$-negligible set.
\end{prop}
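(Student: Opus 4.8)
The plan is to turn the closeness of the regularization parameters into an $L^\infty$-closeness of the two minimizers and then read off what this forces on their discontinuity sets.

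First, the hypothesis on $|\mu-\la|$ is calibrated precisely so that Lemma~\ref{chapTV:jump_lambda} yields $\|u_\la-u_{\mu}\|_\infty\le\e$. Since then $u_\la-\e\le u_{\mu}\le u_\la+\e$ a.e.\ in $\O$, one obtains for every $t\in\R$ the sandwiching of superlevel sets
\[
\{u_\la>t+\e\}\ \subseteq\ \{u_{\mu}>t\}\ \subseteq\ \{u_\la>t-\e\}
\]
up to Lebesgue-negligible sets (recall that the superlevels of both minimizers are, by Proposition~\ref{chapTV:LevelSetPb_f_thm}, the solutions of the corresponding prescribed-curvature problems). Comparing Lebesgue densities of these sets at a point then gives $u_{\mu}^{-}(x)\le u_\la^{-}(x)+\e$ and $u_{\mu}^{+}(x)\ge u_\la^{+}(x)-\e$ for $\H^{N-1}$-a.e.\ $x$.

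Next I would fix $x\in J_{u_\la}^\e$ and, discarding an $\H^{N-1}$-negligible set, assume $x$ is an approximate jump point of $u_\la$ in the sense of Definition~\ref{chapTV:def_jump} (one-sided traces $u_\la^{-}(x)<u_\la^{+}(x)$ with $(u_\la^{+}-u_\la^{-})(x)>\e$ and jump normal $\nu$) and that $x$ is either a Lebesgue point or an approximate jump point of $u_{\mu}$ (using $\H^{N-1}(S_{u_{\mu}}\setminus J_{u_{\mu}})=0$). I would argue by contradiction that $x$ cannot be a Lebesgue point of $u_{\mu}$: if it were, with value $z$, then restricting the defining averages to the half-balls $B_r^{\pm}(x,\nu)$ and using the pointwise bound $|u_\la-u_{\mu}|\le\e$ together with the approximate limits of $u_\la$ (equal to $u_\la^{\pm}(x)$) and of $u_{\mu}$ (equal to $z$) on those half-balls gives $|z-u_\la^{-}(x)|\le\e$ and $|z-u_\la^{+}(x)|\le\e$. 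To exclude such a $z$ and reach the stated threshold I would invoke the regularity of the superlevels of the ROF minimizer $u_{\mu}$ (Theorems~\ref{chapTV:quasi_min} and~\ref{chapTV:quasi_min_fort}, available since $g\in L^\infty(\O)$ and $|\O|<\infty$): for $s$ strictly between $u_\la^{-}(x)$ and $u_\la^{+}(x)$ the set $\{u_\la>s\}$ has density $\tfrac12$ at $x$ with a $C^{1,\gamma}$ graph representative there, and feeding this into the sandwiching forces $\{u_{\mu}>s\}$ to have density in $(0,1)$ at $x$ over a range of $s$, which is incompatible with $x$ being a Lebesgue point of $u_{\mu}$. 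Hence $x\in J_{u_{\mu}}$, and the asserted inclusion follows up to an $\H^{N-1}$-negligible set.

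The routine ingredients are the $L^\infty$ bound of Lemma~\ref{chapTV:jump_lambda} and the passage to superlevel sets. The main obstacle is the last step: soft $BV$ structure alone only delivers $u_{\mu}^{+}(x)-u_{\mu}^{-}(x)\ge(u_\la^{+}-u_\la^{-})(x)-2\e$, so recovering an honest discontinuity of $u_{\mu}$ from a jump of $u_\la$ of size only $>\e$ is exactly where one has to press the $C^{1,\gamma}$-regularity of the level lines of $u_{\mu}$ into service and keep careful track of the constant relating $\|u_\la-u_{\mu}\|_\infty$ to the jump threshold; this interplay between the uniform estimate and the fine structure of the jump set is the delicate point.
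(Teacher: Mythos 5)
Your overall strategy coincides with the paper's: Lemma \ref{chapTV:jump_lambda} (whose hypothesis is indeed calibrated to give $\|u_\la-u_\mu\|_\infty\le\e$) followed by a comparison of the approximate one-sided traces. The paper's entire proof is the one-line inequality $\e<(u_\la^+-u_\la^-)(x)\le\e+(u_\mu^+-u_\mu^-)(x)$ for $\H^{N-1}$-a.e.\ $x\in J_{u_\la}^\e$. You correctly point out that what the $L^\infty$ bound actually delivers is $u_\mu^+(x)\ge u_\la^+(x)-\e$ and $u_\mu^-(x)\le u_\la^-(x)+\e$, hence only $(u_\mu^+-u_\mu^-)(x)\ge(u_\la^+-u_\la^-)(x)-2\e$, which is vacuous precisely when the jump of $u_\la$ lies in $(\e,2\e]$.

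The genuine gap is that the extra machinery you bring in to cover that regime does not close it. Your sandwiching $\{u_\la>s+\e\}\subseteq\{u_\mu>s\}\subseteq\{u_\la>s-\e\}$ pins the density of $\{u_\mu>s\}$ at $x$ to the value $1/2$ only for $s\in\bigl(u_\la^-(x)+\e,\ u_\la^+(x)-\e\bigr)$, an interval that is empty exactly when $(u_\la^+-u_\la^-)(x)\le 2\e$; and the $C^{1,\gamma}$ regularity of $\partial^*\{u_\mu>s\}$ is of no help if $x$ simply fails to lie on any such boundary, which is what happens if $u_\mu$ is approximately continuous at $x$ with a value $z\in[u_\la^+(x)-\e,\ u_\la^-(x)+\e]$ --- a nonempty interval in the problematic regime, so your contradiction argument cannot get started there. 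You flag this as ``the delicate point'' but leave it unresolved, so as written your argument only proves $J_{u_\la}^{2\e}\subset J_{u_\mu}$. Note that the paper's single-$\e$ inequality is itself asserted without justification and faces the same factor of $2$; the statement becomes safe (by your own soft argument, with no regularity theory needed) after either replacing the threshold $\e$ by $2\e$ in the definition of $J_{u_\la}^\e$ or replacing $2|\O|\|g\|_\infty$ by $4|\O|\|g\|_\infty$ in the hypothesis, and you should state which repair you are adopting rather than hoping the level-set regularity will rescue the borderline case.
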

\begin{proof}
This proposition is a straightforward application of the preceding lemma which implies 
${\|u_{\la}-u_{\mu}\|}_\infty\leq\e.$
Then clearly for $\H^{N-1}$-almost any $x\in J_{u_{\la}}^\e$
\begin{align*}
\e<(u_\la^+-u_\la^-)(x)\leq\e+(u_{\mu}^+-u_{\mu}^-)(x)
\end{align*}
hence the conclusion.
\end{proof}

\section{Conclusion and perspective}
In this paper, we examined some fine results for energies involving terms that behave like the total variation. In particular, we prove that no new discontinuities are created for energies involving a smooth elliptic anisotropy and a generic fidelity term. This extends the result of \cite{ChamJump} where they dealt with the denoising problem. On the other hand, we characterized creation of unobserved discontinuities for the adaptive total variation functional if the weight is merely Lipschitz continuous. In addition, we proved that the infinity norm is decreased at the discontinuity while minimizing ROF's energy, which is quite counterintuitive.\\

The aforementioned results motivate many interesting questions that remain unsettled and pave the way for future researches. First of all, most of the results of this paper rely heavily on the connection with the perimeter problem via the coarea formula and it does not seem clear to us how they can be adapted to take into account linear perturbations of the data (convolution but also Radon or Fourier transforms).\\

Concerning the problem of inclusion of the discontinuities, it is not clear whether the discontinuities form a monotone sequence for a general datum. Indeed, in this case the connection with the flow fails. This question seems to be related to the existence of a smooth underlying calibration $z$ (obviously not $C^1$) for the ROF problem. This question is actually interesting by itself. But ``Finding a calibration remains an art, not a science'' as would say Frank Morgan. Here, we should also mention the work of Bourgain-Brezis \cite{Bourgain} and De Pauw-Pfeffer \cite{DePauw} where the authors were interested in finding a continuous $z$ such that
\begin{align*}
\diverg(z)=\mu
\end{align*}
for a given Radon measure $\mu$. Though these results are not constructive since referring to the axiom of choice and cannot be easily adapted. The inclusion could also be obtained by establishing strong properties of the derivative $u'(\la)$ by means of $\Gamma$-convergence for instance (see \cite{Braides}). Though the resulting functional seems to be non-local making the problem difficult (see \cite{Camar}).\\

Another problem, which seems within reach, would be to examine the regularity of the minimizer for the general energy we considered. This could be done by adapting \cite{ChamRegularity} where such a result is established for the denoising problem.\\


\bibliographystyle{siam}
\bibliography{biblio}{}
\end{document}